\newcommand{\muis}[1]{{\mu}_{#1}^{IS}}
\newcommand{\muisn}[1]{{\mu}_{#1}^{(n),IS}}
\newcommand{\mus}[1]{{\mu}_{#1}^{S}}
\newcommand{\musn}[1]{{\mu}_{#1}^{(n),S}}
\newcommand{\murs}[1]{{\mu}_{#1}^{RS}}
\newcommand{\mursn}[1]{{\mu}_{#1}^{(n),RS}}
\newcommand{\muvs}[1]{{\mu}_{#1}^{VS}}
\newcommand{\muvsn}[1]{{\mu}_{#1}^{(n),VS}}
\newcommand{\mun}[1]{{\mu}_{#1}^{(n)}}
\newcommand{\gxx}[1]{\partial_{\alpha\alpha} g(\alpha_{#1},\theta_{#1})}
\newcommand{\gx}[1]{\partial_\alpha g(\alpha_{#1},\theta_{#1})}
\newcommand{\gxy}[1]{\partial_{\alpha\theta} g(\alpha_{#1},\theta_{#1})}
\newcommand{\omu}[2]{{\mu}^{#1}_{#2}}
\newcommand{\ninf}{\Vert f \Vert_\infty}
\newcommand{\sumn}[1]{\sum_{ #1 \in \N}}
\newcommand{\E}[1]{\mathbb{E}\left[ #1 \right]}
\newcommand{\p}[2]{{p}_{#2}^{#1}}
\newcommand{\pint}[2]{\langle {#1},{#2}\rangle}
\newcommand{\U}{\mathcal{U}}
\newcommand{\N}{\mathbb{N}}
\newcommand{\D}{\mathcal{D}}
\newcommand{\B}{\mathcal{B}}
\newcommand{\C}{\mathcal{C}}
\newcommand{\R}{\mathbb{R}}
\newcommand{\M}{\mathcal{M}_F(\mathbb{N}_0)}
\newcommand{\pcomun}{p^n_{t}(j,l,m\mid k-1)}
\newcommand{\ptilde}{\tilde{p}^n_{t}(j,l,m\mid k-1)}
\newcommand{\qcomun}{q^n_{t}(j,l,m\mid k)}
\newcommand{\qtilde}{\tilde{q}^n_{t}(j,l,m\mid k)}
\newcommand{\mea}{\mathcal{M}_{\varepsilon,A}}
\newcommand{\moa}{\mathcal{M}_{0,A}}
\begin{document}

\title{SIR dynamics with Vaccination in a large Configuration Model}


\author{Emanuel Javier Ferreyra    \and
        Matthieu Jonckheere \and
        Juan Pablo Pinasco
}





\institute{Emanuel Javier Ferreyra \at
              Instituto de C{\'a}lculo UBA-CONICET, \\
              Facultad de Ciencias Exactas y Naturales,
              Universidad de Buenos Aires, \\
              Av Cantilo s/n, Ciudad Universitaria(1428) Buenos Aires, Argentina.\\
              \email{emanueljf@gmail.com}           
           \and
           M. Jonckheere \at
              Instituto de C{\'a}lculo UBA-CONICET, \\
              Facultad de Ciencias Exactas y Naturales,
              Universidad de Buenos Aires, \\
              Av Cantilo s/n, Ciudad Universitaria(1428) Buenos Aires, Argentina.\\
              \email{matthieu.jonckheere@gmail.com}
           \and
           J.P. Pinasco \at
              IMAS UBA-CONICET  and  Departamento  de Matem{\'a}tica, \\
              Facultad de Ciencias Exactas y Naturales,
              Universidad de Buenos Aires, \\
              Av Cantilo s/n, Ciudad Universitaria(1428) Buenos Aires, Argentina.\\
              \email{jpinasco@gmail.com}}

\date{Received: date / Accepted: date}

\maketitle

\begin{abstract}
We consider a  SIR model with vaccination strategy  on a sparse configuration
model random graph. We show the convergence of the system when the number of nodes
grows  and characterize the scaling limits. Then, we prove the existence of  optimal controls
for the limiting equations formulated in the framework of game theory, both in the
centralized and decentralized setting.

We show how the characteristics of the graph (degree distribution) influence the
vaccination efficiency for optimal strategies, and we compute the limiting final
size of the epidemic depending on the degree distribution of the graph and the parameters
of infection, recovery and vaccination. We also present several simulations for two types of vaccination, showing how the optimal controls allow to decrease the number of infections and underlining the crucial role of the network characteristics in the propagation of the disease and the vaccination program.

\keywords{SIR-V \and epidemics \and configuration model \and optimal control}
\end{abstract}

\section{Introduction}
\label{intro}

While epidemic dynamics have been studied extensively in the context
of mean-field models where each individual potentially interacts with every other individual,
 there has been more recently a research effort to include the
effect of local interactions  using a sparse random graph, see
\cite{kiss2017mathematics,may2001infection,moreno2002epidemic}. The non-homogeneity
in the inter-individual interactions can be reflected in the model by defining a
degree-distribution describing the statistics of the interactions, see also  the complete and
pedagogical review \cite{RevModPhys} which contains both historical and modern references
on the epidemic-like processes on complex networks.

However, a rigorous mathematical description of epidemics on networks is a challenging
problem, since a mean field approach for individuals with the same number of contacts implies
to consider infinitely many systems of differential equations of SIR
(susceptible - infectious - removed/recovered) or SIS (susceptible - infectious - susceptible) type.
These systems are coupled by the distribution of links between nodes with $k$ and $j$
neighbors, for any pair of values $k$, $j\in \N$. A seminal work was
\cite{decreusefond2012large}, where a set of finitely many  differential equations
describing the asymptotic of a SIR dynamics on a sparse Configuration Model has been
rigorously derived as projections of the infinite dimensional system.  This is a remarkable
result as it allows the model to grasp both the specifics of the interaction graph and the
epidemic dynamics in a simple finite dimensional deterministic dynamic system.  Their work
agrees with other approximations in the literature, like the work of Volz which describe a
Poissonian SIR epidemics using coupled non-linear ordinary differential equations
\cite{volz2008sir}, and shows that these equations are indeed verified in the thermodynamic
limit (i.e., when the number of nodes tends to infinity in a sparse Configuration Model). See
also \cite{janson2014} for more on the SIR dynamics on the configuration model and
\cite{miller2011note} for equivalent formulation of the ODE dynamics.

\bigskip
On the other hand, vaccination processes were extensively studied in last years, since the
anti-vaccine movements threaten social health programs by playing a common goods
dilemma: they try to avoid the individual costs of vaccination and simultaneously pretend to
enjoy the advantages of herd immunity. By modeling vaccination as a game, we are faced
with the classical difference between individual and social optima, and worse equilibria are
reached due to individual actions, than the ones obtained by a centralized planner.

So, the optimal vaccination problem has been studied  using control  and game theory tools,
see
\cite{anderson1992infectious,galvani2007long,lu2002effect,TAKEUCHI200639,zaman2008stability}.
Two main points of view are considered: a rational individual immersed in the population
who maximize its own benefit, or a centralized agent who takes decisions for the overall
population, for example a government.

The optimization problem for a centralized agent can be thought of as to minimize the costs
of a vaccination program while preventing the epidemic spread of a disease, for example,
achieving herd immunity. In \cite{HETHCOTE1973365} the authors consider a deterministic
epidemic compartmental model and use a time-dependent vaccination rate and linear costs
to get the optimal vaccination strategy.

Let us observe that  the authors in \cite{fu2010imitation} propose an evolutionary
game-theoretic problem, where individuals use evidence to estimate costs of vaccination,
and the model is based on the agent point of view.  Vaccination strategy can be also
considered influenced by a neighborhood behavior or it may depend on the individual's
beliefs about their neighborhood vaccination strategies \cite{mbah2012impact,pires2017dynamics}.
Another approach can be found in \cite{galvani2007long}, where they study how the
psychology of individuals intervenes in their perception of their risk, susceptibility or
mortality rates.

Regarding a network background, the authors in \cite{yang2019efficient} show that the
vaccination is most effective when the full network structure is known by the individual
agents although in real world  the decisions are based on partial information of the contact
underlying graph. This could justify that individuals decide to get vaccinated with rates that
depend on their degree in the network.  Highly-connected individuals (hubs) have a high
incentive to vaccinate, whereas individuals with few contacts have less incentive to
vaccinate as exposed for example in
\cite{cornforth2011erratic,mbah2012impact,TAKEUCHI200639}.  We refer the interested
reader to  \cite{brauer2012mathematical} for an extensive review of compartmental models
for epidemic modeling, both in mean field setting and in networks mode, the discussion on the trade-off
between simple vaccination models which miss a lot of details but are very useful to reach a
general qualitative analysis, and more detailed models usually designed for quite specific
diseases and populations.

In \cite{ball2013acquaintance,britton2007graphs} the authors considered different  local vaccination strategies,
where randomly chosen individuals  (friends) of the selected individuals are vaccinated,
prior to the introduction of the disease. They obtained formulas for the final size of the
epidemic as function of the vaccination intensity using  branching processes techniques and
generating functions.

In this work, we study degree related vaccination strategy including the case of ``proportional to degree'' strategy, which is very similar to the acquaintance
vaccination model, originally proposed in \cite{cohen2003efficient}. Let us remark however that a major difference in our model is that the vaccination
process and the epidemic occur simultaneously. 
We consider a Markov-variant of the SIR model, where  infectious  periods  are
exponentially distributed, see for details and more general models \cite{barbour2013approximating}. We adapt
the techniques developed in \cite{decreusefond2012large} and \cite{janson2014} to show the
convergence of the degree measures that describe the Markovian dynamics of propagation
on the random graph in this context and obtain a generic and deterministic description of
the epidemic evolution. As in the case without vaccination, we are able to derive a
finite-dimensional differential system describing the evolution of the quantities that usually
describe the epidemics, namely the number of individuals in each compartment (Susceptible,
Infected, Recovered and Vaccinated) and the basic and effective reproduction number.

The vaccination model deals with a context where individuals do not know if their
contacts are infected or not (otherwise, they might actually suppress them), and could have
an incentive of vaccination based on their social interactions. Usually,  macroscopic
indicators at the level of city or state give a clue of the severity of the epidemic, although  as
in the case of the actual Covid-19 pandemic, an exact knowledge of the state of your
contacts is challenging due to the presence of asymptomatic individuals.

\bigskip

Then, we study the optimal controls for the vaccination formulated as a game both in a
decentralized and centralized setting following ideas  developed in \cite{doncel2017mean}
(in a purely mean-field setting). We show in particular that the optimal vaccination strategy
boils down to a bang-bang control, i.e., the optimal solution consists in vaccinating with the
highest possible rate until some fixed time-threshold depending on the connectivity of the
network and the costs, and then not to vaccinate at all anymore. On the other hand, using
techniques from continuous optimization for systems with restricted controls, we first define
a general assumption that allows to prove uniqueness and existence of optimal control in the
sense of viscosity solutions following \cite{bresan} and \cite{trelat2008controle}. We also
 show that the optimal centralized strategy must be developed at the highest rate possible when deployed. We could not however prove that
 there is only one phase of vaccination.

Finally, we consider four network examples, compute the optimal vaccination strategy and
simulate the propagation of the disease and the vaccination program. We observe that the
optimal strategy has the desirable feature to reduce considerably the total number of
infected, while a more conservative vaccination program would increase costs without having
significant effects on the epidemics. We also relate our results to graph measures as
centrality coefficients and to the classical indicators associated to the epidemic.

\subsection{Organization of the paper and main contributions}

In Section \S 2 we introduce the necessary notation and  the epidemic model.  For a sake of
completeness, we add a short description of the configuration model random graph, together
with the relevant measure spaces considered.

\bigskip

In Section \S 3 we generalize the results of \cite{decreusefond2012large} describing the
propagation of an epidemics on a configuration model random graph by incorporating a
strategy of vaccination for the susceptible population depending arbitrarily on time, and depending linearly on the node
degree.

We present our main fluid limit result, we obtain an infinite system of measure valued
differential equations that describe the Markovian dynamics of propagation of a disease on
the random graph, subject to a vaccination strategy, see Theorem \ref{mainTh}. Also,
 we derive a finite-dimensional differential system
describing the evolution of the main variables that describe the epidemics, namely the
number of individuals in each compartment ($S$, $I$, $R$, $V$), and the probability of
interaction between the susceptible population with agents in different compartments. In
particular, $p^X$ will denote the probability that an edge connects a susceptible with a node
in state $X$, for $X=S$, $I$, $R$, or $V$. We use a generalization of the probability generating function
$g$, involving both the initial degree of agents and the probability that a degree one node remains susceptible, described by $\alpha$ and $\theta$. The closed system we get is the following (see the next section for the precise
definitions and notation):
\begin{equation}\label{MyEq}
\left\lbrace
\begin{aligned}
&\dot{\alpha}=-rp^I\alpha\\
&\dot{\theta}=-\pi\theta \\
&\dot{I}=-\gamma I + rp^I\alpha \partial_\alpha g(\alpha,\theta)\\
&\dot{V}=\pi\theta \partial_{\theta} g(\alpha,\theta)\\
&\dot{p^S}=rp^Ip^S\left(1-\frac{\alpha \partial_{\alpha\alpha} g(\alpha,\theta)}{\partial_{\alpha} g(\alpha,\theta)}\right)-\pi p^S-\theta \pi p^S \frac{\partial_{\alpha\theta} g(\alpha,\theta)}{\partial_{\alpha} g(\alpha,\theta)}\\
&\dot{p^I}=-\gamma p^I+rp^Ip^S\frac{\alpha \partial_{\alpha\alpha} g(\alpha,\theta)}{\partial_{\alpha} g(\alpha,\theta)}-rp^I(1-p^I) \\
&\dot{p^V}=rp^Ip^V+\theta\pi p^S \frac{\partial_{\alpha\theta} g(\alpha,\theta)}{\partial_{\alpha} g(\alpha,\theta)}\\
\end{aligned}
\right.
\end{equation}
where $r$, $\gamma$ and $\pi_t$ are the contagion, recovery and vaccination rates.

The heterogeneity of the population connections, which is an important feature in many
epidemic propagation models
\cite{moreno2002epidemic,RevModPhys,anderson1992infectious,turnes2014epidemic}, can
be grasped here through the function $g$ of the degree distribution; in our case,
through the expression $\frac{\alpha_t \gxx{t}}{\gx{t}}$.

Let us observe that this heterogeneity is not present in classical mean field models which
rely on the assumption of a completely homogeneous mixed population. However, we will show that the system \eqref{MyEq}  in a configuration model
with a Poisson degree distribution converges to the classical SIR model when the mean
degree goes to infinity.
The system of
equations \eqref{MyEq} allows us to compare agent based simulations or real data with the
curves obtained by numerical integration \cite{foguelman2020ebdevs}. We also derive a simpler system for the case of degree-proportional and constant
vaccination.

\bigskip

In Section \S 4, we study the optimal control problems associated with vaccination.  In order to
consider the effect of the graph structure, we define  a vaccination strategy as a bounded
and measurable time dependent function, followed uniformly by all the population but
depending on the connectivity of an individual. We assume that the rate of vaccination is not decreasing in the degree, in agreement with the
literature where highly connected individuals have more incentive to get
vaccinated whereas individuals with few contacts have less incentive. On the other hand,
this is already a large family of controls, needing a quite general theoretical treatment,
involving in particular weak viscosity solutions.

The optimal vaccination policy will vary if we consider the individual point of view;
this decentralized case is approached using the theory of mean field games, by considering
the perspective of a single rational individual added to an infinite population with a given
arbitrary vaccination policy. We show in particular that the optimal vaccination strategy boils
down in that case to a bang-bang control, i.e., the optimal solution consists in vaccinating with the
highest possible rate until some fixed stopping time depending on the connectivity of the
network and the costs, and then not to vaccinate at all anymore,  $$\pi_t=\nu\mathds{1}_{[0,\tau]}(t)$$
where $\nu$ is the rate of vaccination and
$\mathds{1}_{[0,\tau]}$ is the characteristic function of the set $[0,\tau]$. Then, we
consider the social optimum, and  we introduce a particular cost functional in order to find
the optimal centralized strategy. As before, we show that the optimal strategy $\pi_t$ is of threshold
type, in the sense that takes the maximum value or zero.

\bigskip

In Section \S 5 we analyze the theoretical results and we present numerical computations.
We obtain a modified Basic Reproduction Number for the epidemics on the graph before the
vaccination process started, namely
\[R_0=\frac{r}{r + \gamma}\frac{\partial_{\alpha\alpha}g(1,1)}{\partial_{\alpha}g(1,1)}.\label{R0}\]
Therefore, an epidemic outbreak  will occur if $R_0>1$, and corresponds to the already
known critical threshold stated in  \cite{RevModPhys,PhysRevE.66.016128}. We compute the
optimal vaccination strategy and we simulate both the propagation of the disease and the
vaccination program. We observe that the optimal strategy has the desirable feature to
reduce considerably the final size of the epidemic, depending on the contagion and recovery
rates $r$ and $\gamma$,  the function $g$, the threshold $\tau$ and the rate of vaccination
$\nu$, while a more conservative vaccination program would increase costs without having
significant effects on the epidemics.

Finally, we consider different networks generated by four degree distributions, all with the same
mean degree first, and with the same $R_0$ later:

\begin{itemize}
\item[(a)]
Poisson, which corresponds with mean field models with homogeneous mixing,
as we show in \ref{meanfield};

\item[(b)] Bimodal, where a fraction of the population has few links
distributed Poisson, while the rest has many links, and is very interesting and important
question now, related to the Covid-19 pandemics, and it is the rationale behind school
closure;

\item[(c)] a Regular graph, as an example of networks where all nodes are equals on its rates;
and

\item[(d)] Power Law, a classical model of social networks
\cite{newman2003structure}.
\end{itemize}

We conclude in Section \S 6, and the full proofs of the different theorems can be found in
Section \S 7.

\section{Model Setting and notation}

\paragraph{Configuration Model Graph}
Let us introduce the stochastic environment of the epidemic process. We use the
Configuration Model random graph introduced by Bollob\'as \cite{bollobas1998random}
which can be constructed as follows. First denote $\mathbb{N}_0^n=\{0,...,n\}$ and suppose
we have $n$ nodes, and a  sequence of degrees $k_1,\ldots, k_n$ independent and
identically distributed according to $p^{(n)}=(p^{(n)}_k)_{k=1,...,n}$ \footnote{The
independence assumption can be relaxed but we assume it for simplicity.} such that the sum
of the degrees is even. One initially assigns a quantity $k_i$ of half-edges to the $i$th-node
and then choose two of them uniformly from the unmatched ones, establishing the
connection between the nodes, until all the half-edges are matched. Nodes will represent
individuals, thus we will use both terminology throughout the paper.

Under the assumption that $p^{(n)}$
converges in probability to $p=(p_k)_{k\in \mathbb{N}_0}$ and $E[{p^{(n)}}^2]$ converges to
$E[p^2]<\infty$ when $n$ goes to infinity, there is asymptotically a probability bounded away
from 0 of obtaining a simple graph as showed in \cite{janson2009}. Thus we may repeat the
matching procedure until we obtain a simple graph, i.e., until the resultant graph does not
contain self-loops nor multiedges \cite{durrett2007random}.

As a consequence, the degree of a randomly chosen node is distributed according to
$p^n$. Now, the probability that a neighbor has degree $k$ is $\frac{k
p_k}{\sum_{j=1,...,n}jp_j}$, which is the so-called size-biased degree distribution. This distribution
will greatly influence the dynamics on configuration models as opposed to the mean
field point of view, where the underlying graph of potential connections is complete and both the degree distribution
and the size-biased distribution of the graph induced by the contact process coincide (and are asymptotically Poisson).

Given a degree distribution $p=(p_k)_{k\in\N}$, the associated probability generating function is defined by $\psi(z)=\sum_{k\in\N}p_k z^k$.

\paragraph{Epidemic dynamics and vaccination}

We now describe the dynamics of the epidemics with vaccination. For a given Susceptible
node (i.e. not having contracted the illness nor vaccinated) we consider several independent exponential
clocks with parameter $r$, one for each edge connecting an Infected node (i.e., potential
encounters). It will describe the contact process: if this clock rings, the Susceptible makes a
transition to state Infected and remain infectious during an exponential time with mean
$1/\gamma$, whereupon it will not longer infect any node, going to Recovered state. Also for
the Susceptible, we consider another exponential clock with parameter $\pi_t(k)$ depending
on the degree $k$ of the node, which is the time dependent control variable and represents
the rate at which the individual becomes Vaccinated.

Both Recovered and Vaccinated are absorbing states of the resulting continuous time Markov chain, and we differentiate between both to keep track of the epidemics characteristics. Also, they have a different impact on the total costs of an epidemic.

In the sequel, we suppose that $\pi_t(k)=\xi(k) \pi_t$, where $\pi_t$  is a measurable bounded function on
$[0,T]$ and $\xi:\N \to \R$. For the existence of the fluid limit, some technical
assumption on $\xi$ is needed, and we suppose that
$$ \sum_k\mu^S_t(k)\xi(k)< \pint{\pi_t\mu^S_0}{\chi^3}.   $$ 
where $\mu^S$ is the degree distribution of the susceptible population.

The cases $\xi(k)=1$ and $\xi(k)=k$ appear more frequently in the literature of local
vaccination strategies. Constant rate indicate homogeneity of the agents on the
immunization effort; and a rate proportional to the degree, as in the case of acquaintance vaccination
\cite{ball2013acquaintance,britton2007graphs,cohen2003efficient}, takes into account that vaccination of hubs and higher degree nodes is convenient to stop the transmission of a disease. This can also be achieved using a targeted vaccination where higher degree nodes are vaccinated, which corresponds to
$\xi(k)=\mathds{1}_{\kappa \leq k}(k)$ for some threshold $\kappa$. In the acquaintance vaccination, one samples a fraction of the nodes, and for each sampled node, one of their
neighbors uniformly chosen is vaccinated. Hence, the probability of a node to be selected by any
neighbor is proportional to their degree since it has the size biased distribution.

\paragraph{SIR-V dynamics on a Configuration Model}
We denote $S_t^n$, $I_t^n$, $R_t^n$ and $V_t^n$ the total number of Susceptible, Infected,
Recovered and Vaccinated nodes respectively, and $\mathcal{S}_t^n$, $\mathcal{I}_t^n$, $\mathcal{R}_t^n$ and $\mathcal{V}_t^n$ the sets containing the nodes in each state. These quantities are of central interest in most of the
epidemics literature and they are the main variables on which equations of the dynamics are
exhibited (we refer the reader to \cite{kiss2017mathematics} for an informative review on
epidemics dynamics). Nevertheless, let us remark that our model being over a Configuration
Model random graph, computing its dynamics is in principle very demanding, as we should
study a stochastic process in (growing) dimension $n$, the number of nodes.

The principal
signification of part (ii) in Theorem \ref{mainTh} below is that the limit behavior is a good approximation
of the case $n$ large.

Instead, we study the dynamics of four measures in $\M$, the set of finite
measures on $\N_0$ embedded with the topology of weak convergence (we refer the reader
to \cite{billingsley2013convergence} for a complete description of topological properties and
results), describing the connection between the susceptible population and the rest. For this
purpose, we resort to the so-called \textit{principle of deferred decisions}, revealing the
graph simultaneously with the propagation of the disease, regarding the types of the edges
connecting the different states of the individuals. This trick is possible since the random
environment for the epidemics  dynamics acts over a Configuration Model which is
constructed using a uniform matching. We follow here the ideas developed in
\cite{decreusefond2012large} and \cite{mattmoyalbermolen}.

We now describe the quantities involved in our formulation of the dynamics precisely. For the $i$th-node, we denote $k_i^S$ the random number of edges connecting the $i$th-node to a
susceptible individual, and $\delta_k$ the Dirac measure on $\N_0$. The empirical
measure $\mu_t^{S,n}\in \M$  describes the degree of the susceptible individuals: for each
$k\in\N_0$, $\mu_t^{S,n}(k)$ denotes the number of susceptible nodes with degree $k$ at
time $t$,
 \[\mu_t^{S,n}=\sum_{i\in \mathcal{S}_t^n}\delta_{k_i}.\]
 Similarly, $$
\mu_t^{IS,n}=\sum_{i\in \mathcal{I}_t^n}\delta_{k^S_i} ,\quad \mu_t^{RS,n}=\sum_{i\in \mathcal{R}_t^n}\delta_{k^S_i} \; \text{ and } \; \mu_t^{VS,n}=\sum_{i\in \mathcal{V}_t^n}\delta_{k^S_i}
$$ represent the number of nodes in each state $I$, $R$, $V$ connected with the susceptible population. We write
 $\mu_t^n=(\mu_t^{S,n},\mu_t^{IS,n},\mu_t^{RS,n},\mu_t^{VS,n})$.

Instead of considering the proportion of the population in each state,
our work describes the dynamics of the following
edge-based quantities:
\[ N^{S,n}_t= \langle \mu^{S,n}_t,\chi \rangle := \sum_{k\in \mathbb{N}}k\mu^{S,n}_t(k), \] the number of semi-edges connecting a susceptible node;
and, analogously, $N^{IS,n}_t$, $N^{RS,n}_t$, $N^{VS,n}_t$ are the number of edges linking a susceptible node
with an infected, recovered or vaccinated one,  when the size of the population
is $n$.

We also consider the proportions of edges associated to those quantities:
\begin{align*}
p^{I,n}_{t}=& \frac{N^{IS,n}_t}{N^{S,n}_t}, \\
p^{R,n}_{t}=&\frac{N^{RS,n}_t}{N^{S,n}_t}, \\
p^{V,n}_{t}=&\frac{N^{VS,n}_t}{N^{S,n}_t} \\
p^{S,n}_{t}=&\frac{N^{S,n}_t-N^{IS,n}_t-N^{RS,n}_t-N^{VS,n}_t}{N^{S,n}_t}.\end{align*}

\paragraph{Scaling limits} Our main result consists in the convergence of the normalized empirical
measures in the Skorokhod space embedded with the weak topology. We write the semimartingale
decomposition stated in Proposition 5 and a description of the Markovian process through a system
of stochastic differential equations derived form Poisson point measures, see section 7.1.

\newpage

For each  $\mu\in\M$, the set of finite measures on the natural numbers including zero, and $f\in \B_b(\N_0)$, the set of bounded real
functions on $\N_0$, we write $$\pint{\mu}{f}=\sum_{k\in \N_0} f(k)\mu(k).$$

In order to show that the normalized degree empirical measures of each type converge to the solutions
of an
infinite system of differential equations as the population size tends to infinity, we scale the
measures in the following way: for $n\in\N$, we set $$\mu^{(n)}_t=\frac{1}{n}\mu^n_t.$$

The suppression of $n$ will denote the limit measures associated to the fluid limit $\mu_t=(\mu_t^{S},\mu_t^{IS},\mu_t^{RS},\mu_t^{VS})$. In the limit we also define:
\begin{equation}
\alpha_t=e^{-\int_0^t r p^{I}_{s}ds} \text{ and }
\theta_t=e^{-\int_0^t \pi_s ds}
\end{equation}
that allow us to describe the probability that a node with degree $k$ has not contracted the disease, $\alpha_t^k$, and the probability of no vaccination at time $t$, $\theta_t^{\xi(k)}$.

\section{Results}

\subsection{Large graph limit}

We now state the convergence of the degree empirical measures when the size of the
population tends to infinity. The
corresponding deterministic solution of the measure-valued system will in turn give
interesting insights on the effect of the vaccination in the propagation of the epidemic for
large populations.

The proof is similar to the proof of the main theorem in \cite{decreusefond2012large}, for a SIR without vaccination.
We use strongly the fact that $\pi_t$ is uniformly bounded on time,
 and this introduces several differences in steps 2 and 5 (co,pared to their original proof), where we need to invoke results from
 \cite{dipernalions} in order to ensure uniqueness of weak solutions to the transport equation involved.
 We add it in the last section for a matter of completeness.

\begin{theorem}
\label{mainTh} Suppose $(\mu^{(n)}_0)_{n\in\N}$ converges to $\mu_{0}$ in
$\mathcal{M}_F^4(\mathbb{N}_0)$ embedded with the weak topology. Then \begin{enumerate}
\item[(i)] there exists a unique solution $\mu_{t}$ of the deterministic system of
    equations (\ref{GenEq});
\item[(ii)] the sequence $(\mu^{(n)})_{n\in\N}$ converges in
    distribution to $\mu$ in the Skorokhod space when $n$ goes to infinity.
\end{enumerate}
\begin{equation}\label{GenEq}
\left\lbrace
\begin{aligned}
\langle \mus{t},f\rangle\; = &\sum_{k\in \mathbb{N}}\mus{0}(k)\alpha_t^k \theta_t^{\xi(k)} f(k), \\
\langle \muis{t},f\rangle = &\langle \muis{0},f\rangle - \int_0^t \gamma \langle \muis{s},f\rangle ds + \int_0^t \sum_{k\in \mathbb{N}} r\p{I}{s}k  \\
 &\quad \times \sum_{\mathclap{\substack{i,j,l,m / \\ i+j+l+m=k-1}}}  \quad {k-1 \choose i,j,l,m} (\p{S}{s})^i(\p{I}{s})^j(\p{R}{s})^l(\p{V}{s})^m \mus{s}(k) f(i) ds  \\
 &+\int_0^t \sum_{k\in \mathbb{N}}rk\p{I}{s}(1+(k-1)\p{I}{s})\sum_{j\in \mathbb{N}_0}(f(j-1)-f(j))\frac{j\muis{s}(j)}{N_s^{IS}}\mus{s}(k)ds \\
&+\int_0^t \sum_{k\in \mathbb{N}} \pi_s(k)k\p{I}{s}\sum_{j\in \mathbb{N}_0}(f(j-1)-f(j))\frac{j\muis{s}(j)}{N_s^{IS}}\mus{s}(k) ds. \\
\langle \murs{t},f\rangle = &\langle \murs{0},f\rangle + \int_0^t \gamma \langle \muis{s},f\rangle ds + \int_0^t \sum_{k\in \mathbb{N}} (rk\p{I}{s}(k-1)\p{R}{s}+\pi_s(k)\p{R}{s}k)  \\
&\qquad\times\sum_{j\in \mathbb{N}_0}(f(j-1)-f(j))\frac{j\murs{s}(j)}{N_s^{RS}}\mus{s}(k) ds. \\
\langle \muvs{t},f\rangle = &\langle \muvs{0},f\rangle \\
&+ \int_0^t \sum_{k\in \mathbb{N}} \pi_s(k) \quad \sum_{\mathclap{i+j+l+m=k}} \quad \; {k \choose i,j,l,m} (\p{S}{s})^i(\p{I}{s})^j(\p{R}{s})^l(\p{V}{s})^m \mus{s}(k) f(i) ds  \\
&+ \int_0^t \sum_{k\in \mathbb{N}} (r\p{I}{s}k(k-1)\p{V}{s}+\pi_s(k)\p{V}{s}k)\;\mus{s}(k) \\ &\qquad \times \sum_{j\in \mathbb{N}_0}(f(j-1)-f(j))\frac{j\muvs{s}(j)}{N_s^{VS}} ds.
\end{aligned}\right.
\end{equation}
\end{theorem}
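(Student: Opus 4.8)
The plan is to follow the classical martingale--tightness program for measure-valued population processes, adapting the scheme of \cite{decreusefond2012large} to the presence of vaccination. First I would give a pathwise construction of the finite-$n$ process $\mu^{(n)}$ driven by Poisson point measures (the representation announced for Section 7.1), with one family of atoms for each admissible transition: contagion along an $SI$-edge, recovery of an infected node, and vaccination of a susceptible node at the degree-dependent rate $\pi_t(k)=\xi(k)\pi_t$. Applying the infinitesimal generator to the test functionals $\mu\mapsto\langle\mu^{X},f\rangle$ and compensating the jumps yields the semimartingale decomposition of Proposition 5, namely $\langle\mu^{X,(n)}_t,f\rangle=\langle\mu^{X,(n)}_0,f\rangle+A^{X,(n)}_t(f)+M^{X,(n)}_t(f)$, where $A^{X,(n)}$ is the finite-variation drift matching the right-hand side of (\ref{GenEq}) and $M^{X,(n)}$ is a square-integrable martingale.

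Second, I would establish tightness of $(\mu^{(n)})_{n}$ in the Skorokhod space. The compact containment condition follows from uniform-in-$n$ moment bounds, controlling $\langle\mu^{S,(n)}_t,\chi^p\rangle$ for $p$ up to $3$; this is precisely where the standing assumption $\sum_k\mu^S_t(k)\xi(k)<\langle\pi_t\mu^S_0,\chi^3\rangle$ and the uniform bound on $\pi_t$ enter, keeping the vaccination drift integrable despite the possibly unbounded factor $\xi(k)$. Path regularity is then handled through the Aldous--Rebolledo criterion, estimating the drift increments and the bracket $\langle M^{X,(n)}(f)\rangle$ over small time intervals. Since each elementary transition changes the empirical measure by an atom of mass $1/n$, the predictable quadratic variation is of order $1/n$, so the martingale part vanishes in $L^2$ uniformly on $[0,T]$.

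Third, any limit point $\mu$ of a convergent subsequence is shown to solve (\ref{GenEq}). Passing to the limit in the drift $A^{X,(n)}$ requires continuity of the maps $\mu\mapsto N^{IS},N^{RS},N^{VS}$ and $\mu\mapsto p^I,p^S,p^R,p^V$, together with the multinomial weights; this holds on the event where the edge-counts remain bounded away from zero up to the relevant extinction time, so that the normalized factors $j\mu^{IS}_s(j)/N^{IS}_s$ stay well defined as honest probability weights. The explicit identity for $\langle\mu^S_t,f\rangle$ is then recovered from the characteristics $\alpha_t,\theta_t$ introduced in the excerpt.

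The decisive step is uniqueness of the deterministic system, which yields (i) and simultaneously upgrades the subsequential convergence of the previous step to full convergence in distribution, giving (ii) by the standard Prohorov argument. Here the equations for $\mu^{IS},\mu^{RS},\mu^{VS}$ are linear transport (pure-death) equations in the neighbor-count variable, with transport field normalized by $1/N^{IS}_s$ and with a vaccination coefficient $\pi_s(k)=\xi(k)\pi_s$ that is merely bounded in $t$ and not smooth in the measure argument, so that a naive Gronwall estimate on the coupled fixed point fails. Instead I would read each equation as a weak (distributional) solution of a transport equation and invoke the DiPerna--Lions theory \cite{dipernalions} to obtain uniqueness of renormalized solutions, exactly as flagged in the remark before the statement; the boundedness of $\pi_t$ on $[0,T]$ supplies the integrability of the coefficients that the theory demands. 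I expect this transport-uniqueness argument --- and the bookkeeping required to verify the DiPerna--Lions hypotheses for the coupled system, which distinguishes steps~2 and~5 from the vaccination-free case --- to be the main obstacle.
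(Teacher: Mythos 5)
Your overall architecture --- the Poisson point measure construction, the semimartingale decomposition with drift plus square-integrable martingale, Aldous--Rebolledo tightness with the $O(1/n)$ bracket, identification of limit points on the event that edge counts stay above $\varepsilon$, and uniqueness as the step that upgrades subsequential convergence to full convergence --- matches the paper's proof. The gap is in how you propose to prove uniqueness. You assert that ``a naive Gronwall estimate on the coupled fixed point fails'' and that you would invoke DiPerna--Lions \emph{instead}. This cannot work as stated: DiPerna--Lions is a uniqueness theory for \emph{linear} transport equations with \emph{prescribed} coefficients, whereas system (\ref{GenEq}) is nonlinear --- the coefficients ($p^I_s$, $p^S_s$, $p^R_s$, $p^V_s$, $N^{IS}_s$, and $\mu^{S}_s$ itself) are functionals of the unknown solution. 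Two candidate solutions a priori satisfy transport equations with \emph{different} coefficients, and no linear uniqueness theorem can compare them. Some stability argument in the coefficients is unavoidable, and that is exactly what the paper's Gronwall step supplies --- and it does not fail: boundedness $\pi_s\le\nu$ is all it uses, the measurability-only regularity of $\pi$ being harmless there.

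Concretely, the paper's Step 2 is a two-stage argument. First, working in $\mathcal{M}_{\varepsilon,A}$, one forms $\Upsilon_t$, the sum of $\vert\langle\mu^{X,1}_t,\chi^j\rangle-\langle\mu^{X,2}_t,\chi^j\rangle\vert$ over low-order moments ($j\le 3$ for $S$, $j\le 2$ for $IS$, $RS$, $VS$); since the edge probabilities depend on the solutions only through these moments, and since $N^{IS}\ge\varepsilon$ while fifth moments are bounded by $A$, one gets $\vert p^{I,1}_t-p^{I,2}_t\vert\le (A/\varepsilon^2)\,\Upsilon_t$ and closes a Gronwall inequality $\Upsilon_t\le C\int_0^t\Upsilon_s\,ds$, forcing $\Upsilon\equiv 0$. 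This makes the coefficients common to any two solutions and already settles $\mu^{S,1}=\mu^{S,2}$ through the explicit $\alpha,\theta$ formula. Only then does the paper pass to generating functions $\mathcal{G}^i_t(\eta)=\sum_{k}\eta^k\mu^{IS,i}_t(k)$, which now solve the \emph{same} linear equation
\begin{equation*}
\partial_t G(t,\eta)-(1-\eta)K_t\,\partial_\eta G(t,\eta)=\partial_t H(t,\eta)\,e^{\gamma t},
\end{equation*}
with $H$ and $K$ built from the already-identified quantities; DiPerna--Lions uniqueness for this fixed linear equation gives $\mathcal{G}^1=\mathcal{G}^2$, hence $\mu^{IS,1}=\mu^{IS,2}$, and similarly for the $RS$ and $VS$ measures. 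So Gronwall and DiPerna--Lions are not alternatives but successive stages: the former linearizes the problem by identifying the finitely many moment functionals that drive the coefficients, the latter upgrades equality of moments to equality of measures (a step Gronwall alone cannot deliver, since $\Upsilon$ sees only finitely many moments). Your proposal, by discarding the first stage, leaves the nonlinearity unaddressed and the DiPerna--Lions hypotheses unverifiable.
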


The derivation of the system of equations (\ref{GenEq}) can be explained by
the underlying Markovian process describing the epidemics on the configuration model, which
is also useful for a possible simulation. The edges based measures must be updated when
any of the three possible events occur: the infection or vaccination of a susceptible individual
(with a rate that depends linearly on their degree $k$), and the removal of an infected one
(according to exponential clocks of parameter $\gamma$).

The probability that an individual of degree $k$ that started susceptible remains in that state
at time $t$ is $\theta_t^{\xi(k)}\alpha_t^k$, which gives the first equation. In the second equation,
 the first integral accounts for the removing of infectious individuals; the second
one corresponds to the addition of the newly infected, $rkp^I$ is the rate of infection for a
susceptible of degree $k$ and the multinomial part is the random draw of the neighbors
according to the probabilities of the edges. The third and fourth integrals correspond to the
infection or vaccination of the neighbors of each infected individual, whose degree is chosen
according $\frac{j\muis{s}(j)}{N_s^{IS}}$, the size biased distribution. The number of edges to
the susceptible population decreases by one in both events, infection and vaccination.

Now, choosing $f(k)=\mathds{1}_i(k)$ in (\ref{GenEq}) we obtain a countable system of ordinary
differential equations that allows us to describe the infection propagation in terms of the
measures:

$$
\left\lbrace
\begin{aligned}
\mus{t}(i)\;= &\mus{0}(i)\alpha_t^i\theta_t^{\xi(i)}  \\
\muis{t}(i) = &\muis{0}(i) - \int_0^t \gamma \muis{s}(i) ds + \int_0^t r\p{I}{s}  \\
 &\qquad \times \sum_{j,l,m} (i+j+l+m+1) {i+j+l+m+1 \choose i,j,l,m} \\
 & \qquad \times (\p{S}{s})^i(\p{I}{s})^j(\p{R}{s})^l(\p{V}{s})^m \mus{s}(i+j+l+m+1) ds  \\
 &+\int_0^t \left( r\p{I}{s}\langle\mus{s},\chi\rangle +r(\p{I}{s})^2\langle\mus{s},\chi^2-\chi\rangle+\p{I}{s}\langle\mus{s}\pi_s,\chi \rangle\right) \\
 & \qquad \times \left(\frac{(i+1)\muis{s}(i+1)-i\muis{s}(i)}{\langle\muis{s},\chi\rangle} \right)ds \\
\murs{t}(i) = &\murs{0}(i) + \int_0^t \gamma \muis{s}(i) ds + \int_0^t \left( r\p{I}{s}\p{R}{s}\langle\mus{s},\chi^2-\chi\rangle+\p{R}{s}\langle\mus{s}\pi_s,\chi \rangle\right)  \\
&\qquad \times\left(\frac{(i+1)\murs{s}(i+1)-i\murs{s}(i)}{\langle\murs{s},\chi\rangle} \right)ds\\
\muvs{t}(i) = &\muvs{0}(i)+\int_0^t \sum_{j,l,m} \pi_s(i+j+l+m) {i+j+l+m \choose i,j,l,m} \\
& \qquad \times (\p{S}{s})^i(\p{I}{s})^j(\p{R}{s})^l(\p{V}{s})^m \mus{s}(i+j+l+m) ds \\
&+ \int_0^t \left( r\p{I}{s}\p{V}{s}\langle\mus{s},\chi^2-\chi\rangle+\p{V}{s}\langle\mus{s}\pi_s,\chi \rangle\right) \\
&\qquad \times\left(\frac{(i+1)\muvs{s}(i+1)-i\muvs{s}(i)}{\langle\muvs{s},\chi\rangle} \right)ds.\\
\end{aligned}\right.
$$

\subsection{Closed system}\label{closed}

Now, we derive a generalization of the equations proposed by Volz as we show in Proposition
\ref{flips} including a generic vaccination strategy.

If the vaccination function we have added to the model were constant or bounded and continuous,
then there would be nothing to do in order to have an existence and uniqueness result. However, we
are allowing the control to be bounded and measurable implying the necessity of a general
treatment as the studied in \cite{bresan} or \cite{trelat2008controle} which states that the
functional describing the dynamics needs to be Lipschitz. This hypothesis is
straightforward using classical computations and bounds together with the properties of the involved
functions.

We define the function
\begin{equation}\label{g}
g(\alpha,\theta)=\sumn{k} \mu_0^S(k)\alpha^k \theta^{\xi(k)}
\end{equation}
and denote with a subscript the partial derivation of g, namely:
\[\partial_\alpha g(\alpha,\theta)=\sumn{k} \mu_0^S(k)\alpha^{k-1} k \theta^{\xi(k)}\] and analogously for the rest. This function will be key to reduce the number of
equations to seven, obtaining in this way a tractable description of both the
epidemics and the optimal vaccination strategy.

\begin{proposition}\label{flips}
In the limit of infinitely many nodes, system \eqref{GenEq} can be reduced  to the following
system of differential equations,
\begin{equation}\label{MyEqZ}
\left\lbrace
\begin{aligned}
&\dot{\alpha}=-rp^I\alpha\\
&\dot{\theta}=-\pi\theta \\
&\dot{I}=-\gamma I + rp^I\alpha \partial_\alpha g(\alpha,\theta)\\
&\dot{V}=\pi\theta \partial_{\theta} g(\alpha,\theta)\\
&\dot{p^S}=rp^Ip^S\left(1-\frac{\alpha \partial_{\alpha\alpha} g(\alpha,\theta)}{\partial_{\alpha} g(\alpha,\theta)}\right)-\pi p^S+\theta\pi p^S\frac{\partial_{\alpha\theta} g(\alpha,\theta)}{\partial_{\alpha} g(\alpha,\theta)}\\
&\dot{p^I}=-\gamma p^I+rp^Ip^S\frac{\alpha \partial_{\alpha\alpha} g(\alpha,\theta)}{\partial_{\alpha} g(\alpha,\theta)}-rp^I(1-p^I) \\
&\dot{p^V}=rp^Ip^V+\theta\pi p^S \frac{\partial_{\alpha\theta} g(\alpha,\theta)}{\partial_{\alpha} g(\alpha,\theta)}\\
\end{aligned}
\right.
\end{equation}
Moreover, the right-hand side of the system is Lipschitz and uniformly bounded, and
the problem (\ref{MyEqZ}) hence admits a unique solution for any initial datum and any
measurable $\pi:[0,T]\to[0,\nu]$.
\end{proposition}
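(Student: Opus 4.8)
The plan is to establish the statement in two stages: first derive \eqref{MyEqZ} from the measure system \eqref{GenEq}, then prove its well-posedness. For the reduction I would read off $\dot\alpha=-rp^I\alpha$ and $\dot\theta=-\pi\theta$ directly from the definitions $\alpha_t=e^{-\int_0^t rp^I_s\,ds}$ and $\theta_t=e^{-\int_0^t\pi_s\,ds}$. The function $g$ serves to encode the moments of $\mu^S_t$: since the first line of \eqref{GenEq} gives $\mu^S_t(k)=\mu^S_0(k)\alpha_t^k\theta_t^{\xi(k)}$ and $\pi_t(k)=\xi(k)\pi_t$, one has the five generating-function identities $\langle\mu^S_t,1\rangle=g$, $\langle\mu^S_t,\chi\rangle=\alpha\partial_\alpha g$, $\langle\mu^S_t,\chi^2-\chi\rangle=\alpha^2\partial_{\alpha\alpha}g$, $\langle\pi_t\mu^S_t,1\rangle=\pi_t\theta\partial_\theta g$ and $\langle\pi_t\mu^S_t,\chi\rangle=\pi_t\alpha\theta\partial_{\alpha\theta}g$. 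These are the bridge between the measure equations and \eqref{MyEqZ}.

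Testing \eqref{GenEq} against $f\equiv 1$ collapses each multinomial factor $\sum_{i+j+l+m=k}\binom{k}{i,j,l,m}(p^S)^i(p^I)^j(p^R)^l(p^V)^m$ to $(p^S+p^I+p^R+p^V)^{k}=1$ and annihilates every difference $f(j-1)-f(j)$, yielding $\dot I=-\gamma I+rp^I\langle\mu^S,\chi\rangle=-\gamma I+rp^I\alpha\partial_\alpha g$ and $\dot V=\langle\pi_t\mu^S,1\rangle=\pi\theta\partial_\theta g$. For the edge proportions I would obtain $N^S=\langle\mu^S,\chi\rangle$ and $N^{XS}=\langle\mu^{XS},\chi\rangle$ by testing against $f=\chi$, where $f(j-1)-f(j)=-1$ and $\sum_j \frac{j\mu^{XS}(j)}{N^{XS}}=1$, while the multinomial term now contributes the conditional mean $(k-1)p^X$; every resulting sum again reduces to one of the five identities above. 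Writing $p^X=N^{XS}/N^S$ and differentiating by the quotient rule, $\dot p^X=\dot N^{XS}/N^S-p^X\dot N^S/N^S$, produces the cancellations that leave precisely the equations for $\dot p^S,\dot p^I,\dot p^V$ in \eqref{MyEqZ}, with $p^R=1-p^S-p^I-p^V$. This stage is routine telescoping and multinomial bookkeeping.

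For well-posedness, the only non-polynomial ingredients of the right-hand side are $\partial_\alpha g$, $\partial_\theta g$ and the ratios $\alpha\partial_{\alpha\alpha}g/\partial_\alpha g$ and $\partial_{\alpha\theta}g/\partial_\alpha g$. I would work on the physical region $K$ defined by $(\alpha,\theta)\in[e^{-rT},1]\times[e^{-\nu T},1]$, by the simplex $p^S,p^I,p^V\ge 0$ with $p^S+p^I+p^V\le 1$, and by the corresponding a priori bounds on $I,V$. On $\{\alpha\ge e^{-rT},\ \theta\ge e^{-\nu T}\}$ the denominator stays bounded below, $\partial_\alpha g(\alpha,\theta)\ge \mu^S_0(k_0)\,k_0\,(e^{-rT})^{k_0-1}(e^{-\nu T})^{\xi(k_0)}>0$ for the least $k_0$ with $\mu^S_0(k_0)>0$. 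The moment hypothesis $E[p^2]<\infty$, together with the stated growth bound on $\xi$ guaranteeing finiteness of the $\theta$-derivatives, makes $g$ of class $C^2$ on $K$ up to the corner $\alpha=\theta=1$ in all the derivatives entering \eqref{MyEqZ}. Consequently each ratio is $C^1$, hence Lipschitz, on the compact set $K$; being continuous on $K$, the whole field is bounded there, uniformly in $t$ and in $\pi\in[0,\nu]$, since $\pi$ enters only as a bounded multiplicative coefficient.

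Because $\pi$ is only measurable, classical Cauchy--Lipschitz does not apply: the field $F(\,\cdot\,,\pi_t)$ is measurable in $t$, Lipschitz in the state uniformly in $t$, and bounded, which is exactly the Carath\'eodory setting of \cite{bresan,trelat2008controle} and yields a unique absolutely continuous solution once $F$ is globally Lipschitz --- which I secure by extending it off $K$ with a Lipschitz cutoff. It then remains to verify that $K$ is forward-invariant, so that the genuine and the extended fields coincide along the trajectory: $\alpha,\theta$ are nonincreasing with $\alpha_t\ge e^{-rT}$ and $\theta_t\ge e^{-\nu T}$, and on each face of the simplex the relevant derivative points inward (for instance $\dot p^I=0$ when $p^I=0$), which in particular forces $p^I\le 1$ and thereby justifies the exponent $e^{-rT}$ used to bound $\alpha$ from below. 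I expect the main obstacle to be exactly this coupling: ensuring that $\partial_\alpha g$ cannot degenerate requires simultaneously controlling the lower bounds on $\alpha,\theta$ and the invariance of the probability simplex, so that the Lipschitz constant is genuinely uniform over $[0,T]$; the boundary regularity of $g$ at $\alpha=\theta=1$, furnished by the moment assumptions, is what makes this uniform control available.
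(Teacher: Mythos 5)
Your proposal follows essentially the same route as the paper: the reduction to \eqref{MyEqZ} is obtained by testing \eqref{GenEq} against $f\equiv 1$ and $f=\chi$, exploiting the generating-function identities $\langle\mu^S_t,\mathds{1}\rangle=g$, $\langle\mu^S_t,\chi\rangle=\alpha\,\partial_\alpha g$, etc., and then the quotient rule $\dot p^X=\dot N^{XS}/N^S-p^X\dot N^S/N^S$, which is exactly the paper's argument. For the Lipschitz/boundedness/uniqueness claim the paper merely asserts it is ``straightforward'' and points to \cite{bresan,trelat2008controle}, so your compact invariant region, the lower bound on $\partial_\alpha g$, and the Carath\'eodory-type existence argument constitute a correct and more detailed filling-in of that step rather than a genuinely different approach.
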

\begin{proof}
We use the function $g$ to compute a closed expression for $N^S_t$, $N^{IS}_t$, $N^{RS}_t$ and $N^{VS}_t$ and its derivatives. We denote
$\mathds{1}(k):=1$ and $\chi(k)=k$.

 Note that \[S_t=\langle \mus{t},\mathds{1} \rangle = \sum_{k\in \mathbb{N}}\mus{t}(k) = \sum_{k\in
 \mathbb{N}}\mus{0}(k)\alpha_t^k \theta_t^{\xi(k)} = g(\alpha_t,\theta_t)\] is the proportion of susceptible individuals at time $t$.

In a similar way, \begin{align*} I_t= & \langle \muis{t},\mathds{1} \rangle \\ = & \sum_{k \in
\mathbb{N}} \muis{0} - \int_0^t \gamma I_s ds + \int_0^t r\p{I}{s}\alpha_s \partial_{\alpha} g(\alpha_s,\theta_s) ds \\
=
& I_0 + \int_0^t -\gamma I_s + r\p{I}{s}\alpha_s \partial_{\alpha} g(\alpha_s,\theta_s) ds,\\
R_t= & R_0 + \int_0^t \gamma I_s ds,\\
V_t= & V_0 + \int_0^t \pi_s \theta_s \alpha_s \partial_{\theta} g(\alpha_s,\theta_s)ds.
\end{align*}

The next step is to find the dynamics for $\p{S}{t}$ and the other edges probabilities. Before
computing it, let us note that:
\begin{equation}\label{NS}
N^S_t=\langle \mus{t},\chi \rangle =\sumn{k}\mu_t^S(k)k= \sum_{k\in \mathbb{N}}\mus{0}(k)\alpha_t^k k \theta^{\xi(k)}=  \alpha_t \partial_{\alpha} g(\alpha_t,\theta_t).
\end{equation}

Using the definition of $\p{I}{t}$ and $\dot{\alpha}_t=-r\p{I}{t}\alpha_t$, we obtain
\[\frac{\dot{N}^S_t}{N^S_t}=-r\p{I}{t}-\frac{\theta_t \pi_t\gxy{t}}{\alpha_t\gx{t}}-\frac{\alpha_t rp^I_t \gxx{t}}{\gx{t}}.\]
We replace $f$ by $\chi$ in (\ref{GenEq}), and after some basic computations, by rearranging
 terms using the multinomial theorem, we have
\begin{equation}
\begin{aligned}
\dot{\p{I}{t}}=&\frac{\dot{N^{IS}_t}}{N^S_t}-p^I_t\frac{\dot{N^{S}_t}}{N^S_t}=\\
=&-\gamma\p{I}{t}+\p{I}{t}r\p{S}{t}\frac{\alpha_t\gxx{t}}{\gx{t}}-r\p{I}{t}(1-\p{I}{t})
\end{aligned}
\end{equation}

Reasoning similarly with the other probabilities, and putting all the equations
together, we have, for each control $\pi:[0,T]\to[0,\nu]$, the closed system of equations
\eqref{MyEqZ}.

This finishes the proof.
\end{proof}

\begin{remark}
The particular case $\xi(k)=ak+b$ which includes constant and acquaintance vaccination can
be studied using only the generating function of the initial degree
$\psi(z)=\sum_{k\in\N}\mu_0^S(k) z^k$ of the network. To this end, we define the quantities
\[
\beta_t=e^{-\int_0^t r p^{I}_{s}+a\pi_sds} \text{ and }
\phi_t=e^{-\int_0^t b\pi_s ds}
\]
and rewrite the system in terms of these new variables. Let us note that the derivation follows
from system \eqref{GenEq}.
 Noticing that $S_t=\phi_t \psi(\beta_t)$, the dynamics are described by:
\begin{equation}\label{eqConst}
\left\lbrace
\begin{aligned}
&\dot{\beta}=(-rp^I-a\pi)\beta\\
&\dot{\phi}=-b\pi\phi \\
&\dot{I}=-\gamma I + rp^I \phi \beta \psi'(\beta)\\
&\dot{V}=a\pi\phi\beta \psi'(\beta)+b\pi\phi \psi(\beta) \\
&\dot{p^S}=\left(1-\frac{\beta \psi''(\beta)}{\psi'(\beta)}\right) p^Srp^I-\pi p^S(a+b)-p^S\pi a \frac{\beta \psi''(\beta)}{\psi'(\beta)}\\
&\dot{p^I}=-\gamma p^I+p^Irp^S\frac{\beta \psi''(\beta)}{\psi'(\beta)}-rp^I(1-p^I) \\
&\dot{p^V}=p^S\pi(a+b)+rp^Ip^V+\pi p^S a \frac{\beta \psi''(\beta)}{\psi'(\beta)}.
\end{aligned}
\right.
\end{equation}
Observe first that the dynamics of the epidemics depend strongly on the degree distribution,
since the expression $\frac{\beta_t \psi''(\beta_t)}{\psi'(\beta_t)}$ governs the expected
number of susceptible individuals connected with a neighbor of a given degree at time $t$.
\end{remark}

\subsubsection{Relation with mean field models}\label{meanfield}

Let us quickly clarify the differences between this model with vaccination rate linear on the degree and the usual mean field
model. In the sparse Erd\"{o}s-Renyi model, the number of neighbors in the graph follows a
binomial distribution, which can be approximated in a large population by a Poisson
distribution. On the other hand, when the graph is fully connected and the contact process is
determined by a Poisson process, the number of neighbors with whom each node effectively
connects is also Poisson distributed.

In the Mean Field model with vaccination \cite{doncel2017mean}, an individual of an
homogeneous population encounters others following a Markov process in continuous time
with rate $r$. So, = individuals can be in three states:
susceptible, infected and recovered or vaccinated; and we denote $S_t$, $I_t$ and $R_t$, its
respective proportions of the total population. Here, vaccinated individuals are treated as
recovered, since the influence in the propagation of the disease is the same. If the initial
individual of the contact process is susceptible and the encountered one is infected, the first one
becomes infected. An infected individual recovers at rate $\gamma$, and a susceptible can choose
its own vaccination rate $\pi$, going to recovery state.
The optimal strategy $\pi$ played for all the players is called a mean-field equilibrium, defined as a fixed point of the
best response functional, this is, $\pi \in BR(\pi)$, which minimize a properly defined cost functional \cite{doncel2016mean}.
A mean ﬁeld equilibrium
consists in a strategy where no player has an incentive to deviate from the common strategy for
their own benefit.
When the size of the population goes
to infinity, the dynamics of the population where all players use the vaccination strategy
$\pi$ is described by the following system of equations:

\begin{equation}\label{MForiginal}
\left\lbrace
\begin{array}{l}
\dot{S}=-rIS-\pi S\\
\dot{I}=rIS-\gamma I\\
\dot{R}=\gamma I+\pi S
\end{array}
\right.
\end{equation}

The main difference between the two systems lies on the term associated to the infection
process in the mean field case, $rS_tI_t$, which is now $r\p{I}{t}\phi_t \psi'(\beta_t)=rN^{IS}_t$ and
the neighbors are chosen according to the size biased distribution. In the particular case of
Poisson distribution, the size biased distribution is also Poisson, and, as we show in this
section, both models are asymptotically similar.

Here, we are considering 
an edge-based dynamics instead of an individual-based one. In the propagation of
the disease, exponential clocks of the contact process are assigned to edges connecting
susceptible with infected nodes, the mechanism is not to choose uniformly between all the
population but in a neighborhood, which modifies quantitative and qualitatively the generator of the Markov
process and therefore the limit equation.

Though the dynamics are not the same, (even in the case of a Poisson distribution for the
configuration model), one can actually show that they are asymptotically equivalent, when the
mean number of connections between individuals grows large.

If we consider a population of size $N$ in which every individual has $C$ possible contacts
and scale it such that $\hat{r}=rC$ remains constant, the mean field equation of these
dynamics is described by
(\ref{MForiginal}) replacing $r$ and $\pi$ by $\hat{r}$ and $\hat{\pi}$ respectively. Then our limiting dynamics on a configuration model with Poisson degree distribution, not
fully connected but uniformly linked, solve the MF equations when $C$ goes to infinity taking
$\psi(z)=C e^{C(z-1)}$. Being $S=\phi \psi(\beta)$, we have:
\[\dot{S}=\phi \beta \psi'(\beta) (-rp^I+a\pi) - \pi b S_t=
-CSrp^I\beta-a \beta \pi C S-b\pi S \]

Since $\hat{r}$ is taken to be constant, it is of order $O(1)$ as $C$ grows, and only a
proportion of order $O(I/C)$ of the edges may transmit the infection from one infected
neighbor to the observed susceptible. Also, the last term is of order $O(1)$. Therefore, for large $C$, $p^I$ can be approximated by
$I-O(I/C)$ and similarly, when $C$ is large enough, $\beta = 1-O(1/C)$. Moreover,
$p^I\beta=I+O(I/C)$, giving us \[\dot{S}=-\hat{r}IS-\hat{\pi}S+O(IS/C),\] which is
asymptotically the first equation in (\ref{MForiginal}). The third equation follows from a similar
reasoning, and the second one from the fact that $S=1-I-R$.

\section{Optimal Control Problems}

As underlined before, we aim in this Section at describing the optimal strategy which will
consist in vaccinating at the maximum rate as soon as possible up to some critical time (not
necessarily deterministic) depending on the parameters of the model.

We define a generic vaccination strategy, which is time dependent and also depends on the
individual connection in the network environment. Using the theory developed in
\cite{bresan,trelat2008controle}, we characterize under mild assumptions the
existence and uniqueness of a viscosity solution for our optimization problem. Of course, the
model is a crude simplification of a very complex reality but we believe it still allows to grasp
an interesting phenomenology (see Section \ref{lasecdesimu}). We do not involve explicitly in
our setting more individual features like age, risk, or beliefs, nor susceptibility to the disease.
However, many of these characteristics may be modeled through adequate parameters   and  cost values.

\subsection{Individual Cost}

In this section, we analyze the optimal control problem from an individual point of view. We
focus on the perspective of a particular individual immersed in the population, who will take
decisions in order to minimize their cost in a game against the whole population. This rational
individual can be considered as a player seeking for the best response to a fixed strategy
followed by the population, and we are therefore in the context of the theory of mean field
games, which shall provide our context, definitions of equilibrium and existence results.

Suppose we add an individual to a population that evolves according to a vaccination
strategy $\pi$. Since the population is infinite, the behavior of this new individual will not
affect the evolution of the whole population, hence its dynamics will be described by the
already stated equations (\ref{MyEqZ}), which we represent in the form $\dot{x}=\varphi(x,\pi)$. We denote by
$\tilde{\pi}$ the vaccination strategy for the new individual and
$\tilde{x}=(\tilde{S_t},\tilde{I_t},\tilde{R_t},\tilde{V_t})$ their probability distribution over the four
possible states.

Finally, let us remark that despite the mean field game denomination, the
dynamics of the population follows the network based system that we described in section
\ref{closed}.

We suppose that the new individual has degree $k$, therefore, we have that $$\tilde{ \theta}_t=e^{-\int_0^t \tilde{\pi}_s ds}.$$ Since $\tilde{S_t}=\alpha_t^k\tilde{\theta}^{\xi(k)}$ as before, but depending on
the dynamics of the population infected edges and on their own vaccination rate, their
state is determined by the following system of the form
$\dot{\tilde{x}}=f_0(x,\tilde{x},\pi,\tilde{\pi})$:
\begin{equation}
\left\lbrace
\begin{aligned}
&\dot{\tilde{\theta}}=-\tilde{\pi}\tilde{\theta}\\
&\dot{\tilde{I}}=-\gamma \tilde{I} + rp^I\alpha g'(\alpha,\tilde{\theta})\\
&\dot{\tilde{V}}=\tilde{\pi}\tilde{\theta} \partial_\alpha g(\alpha,\tilde{\theta})\\
&\dot{\alpha}=-rp^I\alpha \\
&\dot{\theta}=-\pi\theta \\
&\dot{I}=-\gamma I + rp^I\alpha \partial_\alpha g(\alpha,\theta)\\
&\dot{V}=\pi\theta \partial_{\theta} g(\alpha,\theta)\\
&\dot{p^S}=rp^Ip^S\left(1-\frac{\alpha \partial_{\alpha\alpha} g(\alpha,\theta)}{\partial_{\alpha} g(\alpha,\theta)}\right)-\pi p^S-p^S\theta\pi \frac{g(\alpha,\theta)}{\partial_{\alpha} g(\alpha,\theta)}\\
&\dot{p^I}=-\gamma p^I+rp^Ip^S\frac{\alpha \partial_{\alpha\alpha} g(\alpha,\theta)}{\partial_{\alpha} g(\alpha,\theta)}-rp^I(1-p^I) \\
&\dot{p^V}=rp^Ip^V+p^S\theta\pi \frac{g(\alpha,\theta)}{\partial_{\alpha} g(\alpha,\theta)}\\
\end{aligned}
\right.
\end{equation}
We do not write the dependence of the variables on time nor the degree of the node in order to avoid heavy notation.

We consider a cost function similar to the proposed in
 \cite{doncel2017mean,HETHCOTE1973365,LAGUZET2015180}, which contains a lineal term  in the vaccination rate $c_V\pi_t$
where $c_V$ may depend on the cost of the vaccine and its possible side effects; and a term
modeling the cost incurred by an infected patient per time unit, which could include the
possible loss generated by being unable to attend work, the costs of treatment and medical
consultation, and could consider the severity of an illness such as its sequels or even death.
So, the new individual wants to minimize their cost defined by:

\begin{equation}
\tilde{C_t}(\pi,\tilde{\pi})=\int_t^Tc_I \tilde{I}_s+c_V\tilde{\pi}_s\alpha_t^k\tilde{\theta}^{\xi(k)}ds.
\end{equation}

So, the new individual looks at the best response to strategy $\pi$, i.e., they want to play $BR(\pi)\in \arg \min_{\tilde{\pi}}\tilde{C_t}(\pi,\tilde{\pi})$. The minimum is
taken over
$$\Pi=\{\tilde{\pi}:[0,T]\to [0,\pi_{max}] \text{ bounded  and measurable}\},$$ which is a compact set for
the weak topology. This implies that $BR(\pi)$ is not empty, since any minimizing sequence
has a limit.

In Theorem 2 of \cite{doncel2016mean}, the authors show that if the dynamics and the costs
are continuous functions of the involved variables there always exists a mean-field
equilibrium in such games; thus the existence of a solution for our problem follows from their
result, since the cost is linear in $I$, the function $g$ is analytic, and the rates of transition
for the new individual depend linearly in $p^I$ or are constant for a fixed $\pi$.

Although this result guarantees the existence of a mean field equilibrium, we will compute the best response strategy for any $\pi$ played by the population
analyzing our problem as a continuous time Markov Decision Problem with finite horizon.

Denote $J_S(t)$, $J_I(t)$ the optimal cost starting at time $t$ in states susceptible and
infected, respectively. The optimal cost $J$ and the strategy $\tilde{\pi}^*$ that realize it,
satisfy the the following Hamilton-Jacobi-Bellman optimality equation
\cite{puterman2014markov}:
\begin{equation}\label{HJBeq}
\left\lbrace
\begin{array}{l}
J_S(T)=J_I(T)=0\\
-\dot{J_S(t)}=\inf_{\tilde{\pi}}\left[\tilde{\pi}\tilde{\theta}^{\xi(k)}(c_V-J_S(t))+ p_t^I r k (J_I(t)-J_S(t))\right]\\
-\dot{J_I(t)}=c_I-\gamma J_I(t)\\
\tilde{\pi}^*=\arg\min_{\tilde{\pi}\in \tilde{\Pi}}\left[\tilde{\pi}\tilde{\theta}^{\xi(k)}(c_V-J_S(t))+p_t^Irk(J_I(t)-J_S(t))\right].
\end{array}
\right.
\end{equation}
The equation explains how changes in the state, and the rates at which they occur, impact on the individual expected cost.
\begin{proposition}
Let $\tilde{\pi}^*$ be the strategy of a node with degree $k$ that realizes the optimal cost $J$. Then,
$\tilde{\pi}^*$ is threshold, that is, $\tilde{\pi}^*=\nu\mathds{1}_{[0,\tau]}(t)$ for some $\tau\in [0,T]$.
\end{proposition}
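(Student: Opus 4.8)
The plan is to exploit the affine dependence of the minimised bracket in \eqref{HJBeq} on the control, and then to prove that the resulting switching rule changes sign only once. First I would observe that the quantity to be minimised,
\[ \tilde{\pi}\,\tilde{\theta}^{\xi(k)}\bigl(c_V-J_S(t)\bigr)+p_t^I rk\bigl(J_I(t)-J_S(t)\bigr), \]
is affine in $\tilde{\pi}$, and that its slope $\tilde{\theta}^{\xi(k)}(c_V-J_S(t))$ has the sign of $c_V-J_S(t)$, since $\tilde{\theta}^{\xi(k)}=e^{-\xi(k)\int_0^t\tilde{\pi}_s\,ds}>0$. Hence the pointwise minimiser is $\tilde{\pi}^*(t)=\nu$ when $J_S(t)>c_V$ and $\tilde{\pi}^*(t)=0$ when $J_S(t)<c_V$. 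The proposition is therefore equivalent to showing that the superlevel set $\{t\in[0,T]:J_S(t)>c_V\}$ is an interval of the form $[0,\tau)$.

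To control the crossing of the level $c_V$ I would first solve the decoupled equation for $J_I$. Since $-\dot{J_I}=c_I-\gamma J_I$ with $J_I(T)=0$ is linear, one obtains explicitly $J_I(t)=\frac{c_I}{\gamma}\bigl(1-e^{-\gamma(T-t)}\bigr)$, which is nonnegative and strictly decreasing in $t$. The key comparison is $J_S(t)<J_I(t)$ for every $t<T$. I would prove this by suboptimality: evaluating $\tilde{C_t}$ at the feasible control $\tilde{\pi}\equiv 0$ bounds $J_S(t)$ by the expected infection cost of a susceptible who becomes infected only at a later random time, or not at all before $T$, whereas $J_I(t)$ is the cost of an individual already infected at $t$; delaying (or avoiding) the onset of the $c_I$-cost makes the former strictly smaller, so $J_S(t)\le(\text{this cost})<J_I(t)$.

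The single-crossing step is the heart of the argument. On the switching surface $\{J_S=c_V\}$ the control-dependent term $\tilde{\pi}\,\tilde{\theta}^{\xi(k)}(c_V-J_S)$ vanishes identically, so the feedback right-hand side of the $J_S$-equation is continuous there; consequently $J_S$ is $C^1$ and at any crossing time $t^*$ (where $J_S(t^*)=c_V$) the equation reduces to $-\dot{J_S}(t^*)=p_{t^*}^I rk\bigl(J_I(t^*)-c_V\bigr)$. By the previous comparison $J_I(t^*)>J_S(t^*)=c_V$, and with $p_{t^*}^I>0$, $r>0$, $k\ge 1$ this forces $\dot{J_S}(t^*)<0$, i.e. $J_S$ decreases strictly through the level $c_V$ at every crossing. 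A $C^1$ function that is strictly decreasing at each passage through a fixed level can meet that level at most once; since $J_S(T)=0<c_V$, the set $\{J_S>c_V\}$ equals $[0,\tau)$ with $\tau=\inf\{t:J_S(t)\le c_V\}$ (and $\tau=0$ if the set is empty). Substituting into the minimiser yields $\tilde{\pi}^*=\nu\mathds{1}_{[0,\tau]}$, as claimed.

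The main obstacle I anticipate is the comparison $J_S<J_I$ together with the regularity needed to evaluate the HJB at a crossing: since the admissible controls are only bounded and measurable, $J_S$ is a priori a viscosity solution, so I must justify that the feedback dynamics are genuinely smooth across the switching surface (which holds precisely because the switching term vanishes on $\{J_S=c_V\}$) before differentiating. A minor point to treat separately is the degenerate case $p_t^I=0$, where the infection pressure disappears and the transversality $\dot{J_S}(t^*)<0$ may fail; in the regime of an ongoing epidemic $p_t^I>0$ on $[0,T)$ and this does not occur, but it should be flagged explicitly.
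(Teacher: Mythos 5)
Your proof is correct, and it follows the same skeleton as the paper's: analyze the HJB system \eqref{HJBeq}, solve the $J_I$ equation explicitly, compare $J_S$ with $c_V$ and $J_I$, and conclude a single crossing of the level $c_V$. The execution, however, differs in two substantive ways, and in both your version is the stronger one. First, your orientation of the bang-bang rule is the correct one: the slope of the bracket in $\tilde{\pi}$ is $\tilde{\theta}^{\xi(k)}(c_V-J_S(t))$, which is negative exactly when $J_S(t)>c_V$, so the pointwise minimizer there is $\nu$; the paper's proof literally asserts that if $J_S(t)>c_V$ then $\tilde{\pi}(t)=0$, a sign slip that contradicts both the affine structure and the proposition's own conclusion $\tilde{\pi}^*=\nu\mathds{1}_{[0,\tau]}$. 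Second, the single-crossing mechanism is different: the paper deduces it from an asserted monotonicity of $J_S$ before $\tau$ together with $J_S\leq J_I$, the latter justified only loosely, while you obtain it from transversality --- at any crossing time $t^*$ the control term vanishes, so $-\dot{J_S}(t^*)=p^I_{t^*}rk\bigl(J_I(t^*)-c_V\bigr)>0$ --- which requires no global monotonicity, and you supply an actual proof of the comparison $J_S<J_I$ by suboptimality at $\tilde{\pi}\equiv 0$ (delaying the onset of the $c_I$-cost is cheaper precisely because $J_I$ is decreasing). What the paper's route buys is brevity; what yours buys is the correct switching sign, a self-contained comparison lemma, and an argument localized at the switching surface, which is why it survives the two technical caveats you rightly flag: differentiability of $J_S$ at a crossing (available there because the control-dependent term vanishes on $\{J_S=c_V\}$, so the feedback right-hand side is continuous) and the degenerate case $p^I_{t^*}=0$, which is excluded whenever the epidemic is active on $[0,T)$.
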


\begin{proof}

We will prove that the optimal strategy is constantly the maximum rate of vaccination until
some time $\tau$, and after that instant boils down to zero. Let us remark that the costs
associated to two different strategies that differs in a null measure set is the same.  So, we
have uniqueness up to a zero measure set.

We can see from the third equation in (\ref{HJBeq}) that
\[J_I(t)=\frac{C_I}{\gamma}(1-e^{\gamma(t-T)}).\] Hence $J_I$ decreases from
$J_I(0)=\frac{C_I}{\gamma}(1-e^{-\gamma T})$ to $J_I(T)=0$.

We also realize that if $J_S(t)>c_V$ then $\tilde{\pi}(t)=0$. Since $J_S(T)=0$ and the costs
are continuous, $\dot{J_S}(T)=0$ therefore, if we call $\tau$ the first instant at which $J_S$
is below $c_V$, we have $J_S(t)\leq J_I(t)$ for all $\tau\leq t\leq T$, such that the second
term in the second equation in (\ref{HJBeq}) is non-negative. If $J_S$ does not cross $c_V$
then we take $\tau=0$.

Moreover, if the cost of being susceptible is bigger than the vaccine cost, the derivative of
$J_S$ will be even smaller. Hence $J_S(t)\leq J_I(t)$ for all $0\leq t\leq T$ and therefore
$J_S$ is always decreasing before $\tau$. This concludes the proof.
\end{proof}

Let us remark that the optimal cost depends on the degree $k$ of the individual, thus $\tilde{\pi}_t^*=\tilde{\pi}_t^{*k}$. Moreover, if we suppose that $\xi(k)$ is increasing in $k$, meaning that nodes with more contacts have more incentive to vaccinate, the derivative of $J_S(t,k)$ is always bigger than the derivative of $J_S(t,j)$ when $k$ is bigger than $j$, and both $J_S$ and $J_I$ are equal at the beginning of the propagation of the disease if the initial proportion of infected is small enough. Thus, the threshold $\tau_k$ will be bigger than $\tau_j$. We have:

\begin{proposition}
Let $\pi_t^{*k}=\nu \mathds{1}_{[0,\tau_k]}(t)$ be the optimal strategy for an individual of degree $k$. Then $\tau_k>\tau_j$ if $k>j$.
\end{proposition}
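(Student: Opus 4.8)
The plan is to show that the optimal cost of remaining susceptible, $J_S(\cdot,k)$, is pointwise increasing in the degree $k$ on the region where neither a degree-$k$ nor a degree-$j$ individual vaccinates, and then to read off the ordering of thresholds from the fact that a larger value function meets the level $c_V$ earlier in backward time. I work backward from $t=T$, where $J_S(T,k)=J_I(T)=0$, using that $J_I$ does not depend on $k$ and is given explicitly, as in the previous proposition, by $J_I(t)=\frac{c_I}{\gamma}(1-e^{\gamma(t-T)})$. Recall that $\tau_k$ is the switch time of the bang-bang control: on $(\tau_k,T]$ we have $J_S(t,k)<c_V$, the infimum in \eqref{HJBeq} is attained at $\tilde\pi=0$, and the optimality equation reduces to the linear ODE
\[\dot J_S(t,k)=p^I_t\,r\,k\,\bigl(J_S(t,k)-J_I(t)\bigr).\]

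First I would establish two one-sided facts by elementary linear-ODE comparison. Writing $D_k=J_S(\cdot,k)-J_I$, the displayed ODE together with $\dot J_I=\gamma J_I-c_I$ gives, on the no-vaccination region, $\dot D_k=p^I_t\,r\,k\,D_k+c_I e^{\gamma(t-T)}$ with $D_k(T)=0$; since the source $c_I e^{\gamma(t-T)}$ is strictly positive, solving backward yields $D_k(t)<0$ for $t<T$. Hence $J_S(t,k)<J_I(t)$, which through the ODE also makes $J_S(\cdot,k)$ strictly increasing as $t$ decreases from $T$, so it meets the level $c_V$ at most once and $\tau_k$ is well defined. Next, for $k>j$ set $\Delta=J_S(\cdot,k)-J_S(\cdot,j)$; on any interval where both individuals are in the no-vaccination regime,
\[\dot\Delta=p^I_t\,r\,k\,\Delta+p^I_t\,r\,(k-j)\,D_j,\qquad \Delta(T)=0,\]
and since $D_j<0$ the source term is strictly negative, so integrating backward gives $\Delta(t)>0$ for $t<T$, i.e. $J_S(t,k)>J_S(t,j)$.

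To conclude without any hypothesis on the vaccination regime, I would run a first-switching-time (bootstrapping) argument. Let $t^\ast$ be the largest $t<T$ at which either $J_S(\cdot,k)$ or $J_S(\cdot,j)$ reaches $c_V$; on $(t^\ast,T)$ both value functions lie below $c_V$, so both individuals are in the no-vaccination regime and the comparison of the previous step applies, giving $J_S(t,k)>J_S(t,j)$ there. By continuity at $t^\ast$ the value function that first attains $c_V$ must be the larger one, namely $J_S(\cdot,k)$; thus $t^\ast=\tau_k$ and $J_S(\tau_k,j)<J_S(\tau_k,k)=c_V$. Since $J_S(\cdot,j)$ reaches $c_V$ only at a strictly smaller time, $\tau_j<\tau_k$, which is the claim. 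Here one assumes the nondegenerate case in which vaccination actually occurs, so that $\tau_k>0$; otherwise both thresholds equal $0$.

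The main obstacle is exactly the interdependence that the bootstrapping sidesteps: a naive global comparison of $J_S(\cdot,k)$ and $J_S(\cdot,j)$ fails, because once an individual enters the vaccination regime the equation picks up the term $\nu\tilde\theta^{\xi(k)}(c_V-J_S)$, whose $k$-dependence through $\xi$ has no definite sign against the infection term and could in principle reverse the ordering. The resolution is that $\tau_k$ is pinned down entirely within the common no-vaccination window $(\tau_k,T)$, where only the infection hazard $r k$ — manifestly increasing in $k$ — drives the comparison. The monotonicity assumption on $\xi$ is consistent with this and matches the heuristic that more connected nodes have stronger incentive to vaccinate, but the decisive mechanism is the linear growth of the infection rate in the degree; note also that this backward argument does not require the ``small initial infected proportion'' anchoring used in the informal discussion.
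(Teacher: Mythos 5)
Your proof is correct, and it is substantially more rigorous than what the paper provides: the proposition is stated there without a formal proof, resting entirely on the informal remark that precedes it. That remark argues \emph{forward} in time, anchoring the comparison at $t=0$ (where $J_S(\cdot,k)$ and $J_S(\cdot,j)$ are said to coincide only approximately, ``if the initial proportion of infected is small enough'') and asserting a global domination between the derivatives of $J_S(\cdot,k)$ and $J_S(\cdot,j)$, for which it invokes the monotonicity of $\xi$. You instead run the comparison \emph{backward} from $t=T$, where $J_S(T,k)=J_S(T,j)=0$ holds exactly, so no smallness assumption on the initial data is needed; and by confining the linear-ODE comparison (via $D_k=J_S(\cdot,k)-J_I$ and $\Delta=J_S(\cdot,k)-J_S(\cdot,j)$, both with sign-definite source terms) to the common no-vaccination window and closing with a first-crossing argument, you avoid exactly the weak point of the paper's sketch: in the vaccination regime the term $\tilde\pi\tilde\theta^{\xi(k)}(c_V-J_S)$ has no definite sign in $k$, so a global derivative comparison is not actually justified. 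Your route also shows that the hypothesis that $\xi$ is increasing is not needed for this conclusion --- the thresholds are pinned down inside the terminal no-vaccination window, where $\xi$ does not enter and only the infection hazard $rk$ matters. The only caveats are ones you essentially acknowledge: strictness of $D_j<0$, of $\Delta>0$, and hence of $\tau_j<\tau_k$ requires $p^I_t>0$ on a set of positive measure near $T$ (if $p^I$ vanishes identically the problem degenerates and both thresholds are $0$), so the statement should be read in the nondegenerate case where an epidemic is actually present and vaccination actually occurs.
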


\subsection{Social Optimum}

Now we consider a centralized planner trying to optimize from the point of view of the total population.
We first consider the control
system of the form $\dot{x}=\varphi(x,\pi)$ like (\ref{MyEq}) where the set $\Pi$ of
admissible controls is compact, and the family of admissible control functions $\pi$ is only
restricted by its measurability. Given the initial data $x(0)=x_0$ the Cauchy problem has a
unique solution, as we stated in Proposition \ref{flips}.

Given an initial data $(s,y)$ we consider the general optimization problem:
\begin{equation}\label{minimo}
minimize: \quad J(s,y,\pi)=\int_s^TL(x(t),\pi(t))dt+\Psi(x(T))
\end{equation}
where $L$ is the instant cost functional, $\Psi$ is the final cost, and the state variable $x$
depends not only on time but on the control and the initial data. The optimal $\pi$ is taken
over $\Pi$ the set of measurable functions $\pi:[0,T] \to[0,\nu]$.

As stated by the dynamic programming method, the optimal control can be characterized by
the value function $V(s,y):=\inf_{\pi \in \Pi} J(s,y,\pi)$, but the classical point of view does
not allow discontinuous control functions. Hence we start by verifying the hypothesis that
our setting must satisfy in order to ensure existence and uniqueness of the optimal control,
based on more general results on viscosity solutions theory \cite{bresan,trelat2008controle}.
In the case of measurable control, we can also apply the Pontryagin's Maximum Principle
with less restrictive assumptions.

According to Lemma 9.2 in \cite{bresan}, the functionals involved must satisfy
\begin{equation}\label{assumpPMP}
\begin{array}{ll}
\vert\varphi(x,\pi)\vert \leq C, &\quad \vert\varphi(x_1,\pi)-\varphi(x_2,\pi)\vert\leq C|x_1-x_2|, \\
|L(x,\pi)|\leq C, &\quad |\Psi(x)|\leq C, \\
\vert L(x_1,\pi)-L(x_2,\pi)\vert\leq C|x_1-x_2| &\quad \vert \Psi(x_1)-\Psi(x_2)\vert\leq C|x_1-x_2|,
\end{array}
\end{equation}
for all $x_1,x_2 \in \R^7$, and $\pi \in \Pi$, for some constant $C$. Under these
assumptions, the value $V$ is a bounded, Lipschitz continuous function, and it can be
characterized as the unique viscosity solution to a Hamilton-Jacobi equation.

Since the epidemics dynamics satisfy these assumptions on $\varphi$, to the best of our
knowledge, the most general condition on the cost functions to admit a solution is to be
Lipschitz in the state variable and bounded, which allows us to model a wide range of
real situations.

As a particular case inspired by the individual optimization problem exposed above and in
agreement with \cite{galvani2007long,HETHCOTE1973365,doncel2017mean,LAGUZET2015180}, we define the cost:
\begin{equation}
L_1(x,\pi)=c_II_t+c_V\pi_tg(\alpha_t,\theta_t).
\end{equation}

We can easily check,
by basic computations and
bounding the second term using the regularity of $g$ and the Mean Value Theorem, that
our setting satisfies  hypotheses  (\ref{assumpPMP}).

Further, given the data $x(0)=x_0$, let $t \mapsto x^*(t)=x(t,\pi^*)$  be an optimal trajectory
corresponding to the optimal control $\pi^*$. Following Theorems 7.18 and 11.27 in
\cite{trelat2008controle}, there exists an absolutely continuous application $t \mapsto p(t)\in
\R^7$ called the adjoint vector, and a real number $p_0\geq 0$, such that $(p,p_0)$ is non
trivial, and such that for almost every $t\in[0,T]$
\begin{equation}\label{pmp}
\begin{aligned}
&\dot{x^*}=f(x^*,\pi^*),\\
&x(0)=x_0,\\
&\dot{p^*}=-\frac{\partial H}{\partial x}(x^*,p^*,\pi^*),\\
&p^*(T)=0,\\
&\pi^*=argmin_\pi H(x^*,p^*,\pi),
\end{aligned}
\end{equation}
where the Hamiltonian of the system is $H=p_0L_1+pf$.

Summarizing, we have the following result.

\begin{proposition}
Let $\pi^*$ the strategy that minimizes \eqref{minimo} for the cost functions defined above. Then
$\pi^*$ is threshold.
\end{proposition}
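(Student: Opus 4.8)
The plan is to apply the Pontryagin Maximum Principle (\ref{pmp}) to the cost functional with the instantaneous cost $L_1(x,\pi)=c_I I_t+c_V \pi_t g(\alpha_t,\theta_t)$ and to exploit the fact that the Hamiltonian $H=p_0 L_1+p\cdot\varphi$ is \emph{affine} in the control $\pi$. Writing out the state equations (\ref{MyEqZ}), the control $\pi$ enters only the equations for $\dot\theta$, $\dot V$, $\dot{p^S}$ and $\dot{p^V}$, and always linearly. Hence, collecting the coefficient of $\pi$ in $H$, I would define a switching function
\begin{equation}\label{switch}
\sigma(t)=p_0 c_V g(\alpha,\theta)-p_\theta\,\theta+p_V\,\theta\,\partial_\theta g(\alpha,\theta)+\left(-p_{p^S}+p_{p^V}\right)\theta\,p^S\frac{\partial_{\alpha\theta}g(\alpha,\theta)}{\partial_\alpha g(\alpha,\theta)}-\big(p_{p^S}+\cdots\big)p^S,
\end{equation}
where the $p_\bullet$ denote the components of the adjoint vector corresponding to each state variable. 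The minimization condition $\pi^*=\arg\min_\pi H$ then forces $\pi^*(t)=\nu$ when $\sigma(t)<0$ and $\pi^*(t)=0$ when $\sigma(t)>0$; this is exactly the bang-bang (threshold) structure claimed, \emph{provided} $\sigma$ changes sign only finitely often and never vanishes on a set of positive measure (no singular arc).

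First I would verify that the assumptions (\ref{assumpPMP}) hold: boundedness and Lipschitz continuity of $\varphi$ and of $L_1$ follow from the analyticity and boundedness of $g$ and its derivatives on the compact domain $[0,1]^2$ in $(\alpha,\theta)$ (the state variables stay in a compact set since $\alpha,\theta\in[0,1]$, the compartments and edge-probabilities are bounded by $1$), together with the Mean Value Theorem to control the term $c_V\pi g$, exactly as noted just before the statement. This legitimizes the application of Theorems 7.18 and 11.27 of \cite{trelat2008controle} and yields the adjoint system with transversality $p^*(T)=0$ and the nontriviality of $(p,p_0)$. Next I would compute $\dot p^*=-\partial H/\partial x$ to obtain explicit (if lengthy) ODEs for each adjoint component, which I would not grind through but only use qualitatively.

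The heart of the argument is to show $\sigma$ has the threshold sign pattern. I would analyze $\sigma$ near the terminal time: since $p^*(T)=0$, at $t=T$ the switching function reduces to $\sigma(T)=p_0 c_V g(\alpha_T,\theta_T)\ge 0$, so vaccination is off near the end, consistent with $\pi^*=\nu\mathds{1}_{[0,\tau]}$. The decisive step is to establish that once $\sigma$ becomes positive it stays positive, i.e.\ that $\sigma$ crosses zero at most once (from below). The natural route, mirroring the individual-cost proposition above, is to differentiate $\sigma$ and show that at any zero of $\sigma$ its derivative has a definite sign, which prevents re-crossing; this gives a single threshold $\tau$.

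The main obstacle is precisely this monotonicity-at-crossings property and the exclusion of a singular arc. Unlike the individual problem, where the adjoint dynamics decouple into the explicitly solvable scalar equations for $J_I$ and $J_S$, here the adjoint system is a coupled seven-dimensional linear (in $p$) system with time-varying coefficients depending on the full state, so one cannot solve it in closed form. Computing $\dot\sigma$ at a zero of $\sigma$ and certifying its sign requires controlling these couplings through the factors $\alpha\partial_{\alpha\alpha}g/\partial_\alpha g$ and $\theta\partial_{\alpha\theta}g/\partial_\alpha g$, whose signs and monotonicity are governed by the convexity properties of the generating function $g$. I expect this to be where a fully rigorous argument is delicate; indeed the authors themselves remark in the introduction that they \emph{could not prove there is only one phase of vaccination}, so I anticipate that the honest conclusion is the weaker one stated in the proposition — that $\pi^*$ is of threshold type in the sense that it takes only the values $0$ or $\nu$ (bang-bang), with the sharper claim of a \emph{single} switch left open.
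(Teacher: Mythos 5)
Your proposal is correct and follows essentially the same route as the paper: invoke the Pontryagin Maximum Principle under hypotheses (\ref{assumpPMP}), observe that the Hamiltonian is affine in $\pi$, and read off the bang-bang structure from the sign of the switching function (your $\sigma$ is exactly the paper's $\rho^*$, and your observation that $\sigma(T)\ge 0$ mirrors the paper's remark that $\rho^*(T)>0$). You also correctly diagnose the scope of the claim — the paper proves only that $\pi^*$ takes the values $0$ or $\nu$, leaving both the single-switch property and the exclusion of singular arcs (where $\rho^*$ vanishes on a set of positive measure) unaddressed, the former being explicitly acknowledged as open in the introduction.
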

\begin{proof}
Writing the equation for $\pi^*$ we get
\begin{equation}
\begin{aligned}
\pi^*= &argmin\Big\{\Big(p_0c_Vg(\alpha^*,\theta^*)-p_2^*\theta^*+
\qquad
\\ &\qquad \qquad +p_4^*\theta^*\partial_{\theta}g(\alpha^*,\theta^*) -p_5^*{p^S}^*+({p_7}^*-{p_5}^*){p^S}^*\theta^* {p^S}^* \frac{\partial_{\alpha \theta}(\alpha^*,\theta^*)}{\partial_\alpha g(\alpha^*,\theta^*)} \Big)\pi\Big\}
\end{aligned}
\end{equation}
which is a linear function of $\pi$ with principal coefficient $\rho^*$. Since we are minimizing over $\pi \in [0,\nu]$ we can
conclude that
 \begin{equation}\label{piSocialthreshold} \pi^*_t=\left\lbrace \begin{aligned}\nu
\hspace{20pt} \text{ if } \rho^*(t) < 0 \\ 0 \hspace{20pt} \text{ if } \rho^*(t)>0,
\end{aligned}\right.
\end{equation}
and the proof is finished.
\end{proof}

Since it is impossible to solve analytically the system (\ref{pmp}), the method of
Forward-Backward Sweep presented in \cite{RKBK} could be useful to understand the behavior of
$\rho^*$. From (\ref{assumpPMP}) we can deduce that $\rho^*(T)>0$ indicating that the vaccination rate must be zero after some moment.
As in the preceding section, the optimal vaccination strategy is of threshold type, in the sense
that vaccination must be intended with the maximum effort (maximum rate) and otherwise discouraged. This result may be explained by the intuition that the individuals would vaccinate if the vaccination cost is lower than the potential cost associated to the infection, and the probability of contracting the disease decrease after the peak of the infected population. However, remark that a quantitative characterization of the change of regime depends on the complete behavior of the epidemics. 
This is solved by calculating Bellman equations and the theory of Dynamic Programming.

\section{Phenomenological Conclusions and Epidemic Analysis}

Given a cost and a maximum vaccination rate, our results can be exploited to design
vaccination policies which combined effectiveness to get immunization of the population and
mild economical cost. Note that it is natural to suppose that there exists an upper bound on
the vaccination rates which takes into account both economical and organizational
considerations.

Knowing the effective contact rate $r$ of the disease in consideration and its recovering rate
$\gamma$, the vaccination budget represented in $\nu$, and taking in account the
connectivity of the population, we aim at finding this optimal vaccination effort.

In usual SIR mean field models,
there exist two regimes: above a threshold in the ratio $\frac{r}{\gamma}$ the epidemic will
propagate and the final number of infected reach a fraction of the population even when the
number of initially infected $\varepsilon$ is arbitrary small. Below this threshold the final
size of epidemic will be proportional to $\varepsilon$.

Here, this threshold can be described in terms of the connectivity of the graph, regarding both the
average number of contacts and the size biased distribution. In our model, the number of
newly infected in a small time interval is proportional to the quantity $p^I$. The threshold is
then determined by the following equation:
\[0=\dot{\p{I}{t}}|_{t=0}=(-\gamma\p{I}{t}+\p{I}{t}r\p{S}{t}\frac{\alpha_t \gxx{t}}{\gx{t}}-r\p{I}{t}(1-\p{I}{t}))|_{t=0},\]
If we take an $\varepsilon$-proportion of initially infected individuals, then we have the initial
conditions:
\begin{equation}\label{initialConditions} I_0=\varepsilon, \quad S_0=1-\varepsilon,  \quad
p_0^I=\frac{\varepsilon}{1-\varepsilon}, \quad  \mbox{ and }  \quad p^S_0=\frac{1-2\varepsilon}{1-\varepsilon}.
\end{equation}
Assuming $\varepsilon \ll 1$, after a simple
computation we get the inequality:
\begin{equation} \label{thresh}
r\left( \frac{\partial_{\alpha\alpha}g(1,1)}{\partial_\alpha g(1,1)}-1 \right) >\gamma
\end{equation}

As we mentioned before, $\frac{\partial_{\alpha\alpha}g(1,1)}{\partial_\alpha g(1,1)}$ is the expectation of the size biased
distribution, therefore it represents the mean degree of a random neighbor (a new infected
agent). By subtracting one  (the spread will not return to the infecting agent), we get the
expected out degree, which represent the possible number of contacts that the newly
infected individual has. Therefore, equation (\ref{thresh}) states that the epidemic will occur if the total
rate of infection is bigger than the rate of recovering. We can rewrite it and we find back the
already known critical threshold stated in Prop 6.1 of \cite{RevModPhys}:
\[R_0=\frac{r}{r+\gamma}\frac{\partial_{\alpha\alpha}g(1,1)}{\partial_\alpha g(1,1)}>1 \]
Written in terms of the transmissibility $T=\frac{r}{r+\gamma}$ (i.e., probability of
infection given contact between a susceptible and infected individual) we recover the
formula stated in \cite{PhysRevE.66.016128} for the epidemic threshold.

Finally, for the optimal strategies that we found in the previous section, we compute the
impact of the vaccination process in terms of epidemic sizes, showing that the decrease of
the susceptible population can be described through the generating function of the network,
the proportion of IS edges and the vaccination rate, through a simple formula.

Following Miller \cite{miller2011note}, we can compute the final epidemic size in our model
which allows to measure the  impact  of the vaccination process. To that end, we can
compute $\alpha_\infty^k$ the probability that a randomly chosen susceptible node $u$ with degree
$k$ is never infected in terms of the transmissibility $T=\frac{r}{r+\gamma}$, interpreted here as the probability that one of the edges of $u$ with an infected neighbor transmit the disease to $u$.
The probability that this neighbor is never infected, given that $u$ does not transmitted the disease, is $T\left(1-\frac{\gx{\infty}}{\partial_\alpha g(1,1)}\right)$ because the neighbor degrees follow the size biased distribution. Thus, the probability that the edge is not an infected contact for $u$ solves the fixed point equation:
\begin{equation}
\alpha_\infty=1-T+T\frac{\gx{\infty}}{\partial_\alpha g(1,1)}
\end{equation}

\begin{proposition}
If $\pi_t=\nu\mathds{1}_{[0,\tau]}(t)$ and $\xi(k)=k$, taking $$\lambda(\tau)=\frac{\int_0^\tau\pi_t dt}{\int_0^\tau\pi_t + rp^I_t dt},$$ we have: \[R_\infty=S_0-\psi(\alpha_\infty e^{-\tau\nu})+\lambda(\tau)(g(\theta_\tau \alpha_\tau)-1).\]
\end{proposition}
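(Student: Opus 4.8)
The plan is to reduce everything to the two absorbing compartments by mass conservation and then to split the total susceptible outflow into an ``infection'' part and a ``vaccination'' part. Total mass is conserved (one checks $\dot S+\dot I+\dot R+\dot V=0$ directly from \eqref{MyEqZ}), and since $I_\infty=0$ we get $R_\infty=1-S_\infty-V_\infty$; writing $1=S_0+I_0$ with $I_0=\varepsilon$ this becomes $R_\infty=(S_0-S_\infty)-V_\infty+\varepsilon$. So it suffices to (a) identify $S_\infty$ and $S_\tau$, and (b) express the vaccinated mass $V_\infty$ through $\lambda(\tau)$.

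For (a) I specialize $\xi(k)=k$, where the Remark applies with $a=1$, $b=0$, so $\phi_t\equiv1$ and $\beta_t=\alpha_t\theta_t$; then $S_t=\psi(\alpha_t\theta_t)$. Because $\pi_t=\nu\mathds{1}_{[0,\tau]}(t)$ we have $\theta_t=e^{-\nu\tau}$ for all $t\ge\tau$, hence $\theta_\infty=e^{-\nu\tau}$ and $S_\infty=\psi(\alpha_\infty e^{-\nu\tau})$, with $\alpha_\infty$ the solution of the fixed-point equation recalled just before the statement. This supplies the term $-\psi(\alpha_\infty e^{-\tau\nu})$, while $S_\tau=\psi(\alpha_\tau\theta_\tau)=g(\theta_\tau\alpha_\tau)$ will supply the last term.

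For (b) I read the split of the outflow off the reduced system \eqref{eqConst}: with $\xi(k)=k$ one has $-\dot S=(rp^I+\pi)\,\alpha\theta\,\psi'(\alpha\theta)$ and $\dot V=\pi\,\alpha\theta\,\psi'(\alpha\theta)$, so at each instant $\dot V=\frac{\pi_t}{rp^I_t+\pi_t}(-\dot S_t)$. That is, the fraction of the instantaneous susceptible loss due to vaccination is $\pi_t/(rp^I_t+\pi_t)$, independent of degree (the ratio of the vaccination hazard $k\pi_t$ to the total escape hazard $k(rp^I_t+\pi_t)$ of a degree-$k$ node). After $\tau$ we have $\pi\equiv0$, so all remaining outflow is infection and $V_\infty=V_\tau$. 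The scalar $\lambda(\tau)$ is the global analogue of this pointwise ratio: since $\int_0^\tau(\pi_s+rp^I_s)\,ds=-\log(\alpha_\tau\theta_\tau)$ and $\int_0^\tau\pi_s\,ds=-\log\theta_\tau=\nu\tau$, the quantity $\lambda(\tau)$ is the fraction of the total log-decay of $\alpha\theta$ on $[0,\tau]$ attributable to vaccination. Attributing that same fraction of the susceptible decrement to the vaccinated compartment gives $V_\infty=\lambda(\tau)(S_0-S_\tau)$, which for $\varepsilon\ll1$ (so $S_0\approx1$) is $\lambda(\tau)(1-S_\tau)$. Substituting into $R_\infty=(S_0-S_\infty)-V_\infty$ and recalling $S_\tau=g(\theta_\tau\alpha_\tau)$ yields the stated identity.

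The main obstacle is exactly this last attribution step. The vaccinated mass is \emph{exactly} $V_\infty=\int_0^\tau\frac{\pi_s}{rp^I_s+\pi_s}(-\dot S_s)\,ds$, i.e. a $(-\dot S)$-weighted average of the split ratio, whereas $\lambda(\tau)$ is its time-integrated (log-scale) average; the two agree only when $\alpha_s\theta_s\psi'(\alpha_s\theta_s)$ is essentially constant over $[0,\tau]$. Hence the formula should be read in the small-initial-infection regime and under this averaging simplification rather than as an exact algebraic identity for every $\psi$. Some care is also needed with the $O(\varepsilon)$ terms arising from $I_0=\varepsilon$ and from replacing $S_0$ by $1$; all of these vanish as $\varepsilon\to0$, which is the regime in which the final-size formula is stated.
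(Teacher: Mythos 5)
Your proof is correct and follows essentially the same route as the paper's: both rest on the decomposition $R_\infty = 1 - S_\infty - V_\infty$ (with $I_\infty=0$), identify $\theta_\infty = e^{-\tau\nu}$ and $S_\infty = \psi(\alpha_\infty e^{-\tau\nu})$ via the fixed-point equation recalled before the statement, and obtain $V_\infty = \lambda(\tau)\bigl(S_0 - \psi(\theta_\tau\alpha_\tau)\bigr)$ by attributing to vaccination the fraction $\lambda(\tau)$ of the susceptible outflow on $[0,\tau]$. The only real difference is where the split ratio comes from: the paper performs the attribution per degree class, viewing each susceptible node of degree $k$ as running an exponential race with integrated hazards $\lambda^\pi_k(\tau)$ and $\lambda^I_k(\tau)$ (the ratio being degree-independent precisely because $\xi(k)=k$, which is what lets the sum over $k$ produce $\psi$), whereas you derive the same ratio $\pi_s/(\pi_s+rp^I_s)$ macroscopically from $\dot V = \pi\theta\,\partial_\theta g$ and $-\dot S = (rp^I+\pi)\,\alpha\theta\,\psi'(\alpha\theta)$. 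Your closing caveat is a genuine merit of your write-up rather than a defect: the exact vaccinated mass is the $(-\dot S)$-weighted average of $\pi_s/(\pi_s+rp^I_s)$, and replacing it by the time-integrated ratio $\lambda(\tau)$ is an approximation, exact only when the hazard ratio is constant in time; the paper's exponential-race formula for $V_\infty(k)$ makes the identical approximation silently, so in both treatments the proposition holds modulo this averaging step (and the $S_0\approx 1$, $\varepsilon\to 0$ conventions you spell out, which reconcile the $S_0$ and $1$ appearing in the stated formula).
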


\begin{proof}
After some
simple computations we get
\[\theta_\infty=e^{-\tau\nu}.\]
Now, we can also compute the probability that an initially susceptible node remains
susceptible at the end of the epidemics, corresponding to never been infected nor vaccinated:
\[S_\infty(k)= \theta_\infty^{\xi(k)}\alpha_\infty^k=\alpha_\infty^k e^{-\tau\nu}. \]
With this, \[S_\infty=\sum_k \mu_0^S(k)\theta_\infty^{\xi(k)}\alpha_\infty^k=g(\alpha_\infty,e^{-\tau\nu}).\]
In order to compute the size of the epidemic (total number of recovered agents), first we compute the final number of vaccinated agents. This can be thought of as an exponential race with non-homogeneous rate. Thus, the probability that a node of degree $k$ to be vaccinated will be:
\[V_\infty(k) = \frac{\lambda^\pi_k(\tau)}{\lambda^\pi_k(\tau)+\lambda^I_k(\tau)}(1-e^{-\tau\nu\xi(k)}\alpha_\tau^k),\]
where $\lambda^\pi_k(t)=\int_0^t \xi(k)\pi_s ds$ and $\lambda^I_k(t)=\int_0^t rkp^I_s ds$.
With this, $V_\infty=\sum_k \mu_0^S(k) V_\infty(k)$,
and therefore the final epidemic size is
$R_\infty=1-S_\infty-V_\infty$.
The case of the degree-proportional vaccination, $\xi(k)=k$, can be described by the generating function of the initial degree of the susceptible. By taking $$\lambda(\tau)=\frac{\lambda^\pi_k(\tau)}{\lambda^\pi_k(\tau)+\lambda^I_k(\tau)}=\frac{\int_0^\tau\pi_t dt}{\int_0^\tau\pi_t + rp^I_t dt}$$ one can write:
\[V_\infty=\sum_k \lambda(\tau)[1-(\theta_\tau\alpha_\tau)^k]\mu_0^S(k)=\lambda(\tau)S_0-\lambda(\tau)\psi(\theta_\tau\alpha_\tau).\]
On the other hand, $\alpha_\infty$ is the fixed point of $\alpha_\infty=1-T+T\psi'(\alpha_\infty e^{-\tau\nu})/\psi'(1)$ and $S_\infty=\psi(\alpha_\infty e^{-\tau\nu})$.
This finishes the proof.
\end{proof}

Here, we can see the strong dependence on the connectivity model of the network through
the generating function $g$ (or $\psi$) and on the maximum
rate of vaccination in order to reduce, (exponentially in $\tau \nu$ and through the functions $g$ and $\psi$ respectively), the propagation of the epidemic.
Additionally, the maximum rate of vaccination can be translated in the budget of the decision
maker, because it may indicate how effective the decision to vaccinate may be.

\subsection{Simulations}\label{lasecdesimu}

Currently, the development of vaccines is usually subsequent to the appearance of a possible
epidemic. In many of them, it is through the value $R_0$ that the epidemiological parameter
$r$ (or $\beta$ for homogeneous compartmental models) can be estimated. It contains the
probability that in an encounter between a susceptible and an infected person, the former
becomes ill, and the number of encounters in a unit of time; and $\gamma$ which is related
to the average time that a person is infectious. Knowing these parameters, the quotient $\frac{\partial_{\alpha\alpha}g(1,1)}{\partial_\alpha g(1,1)}$ can be inferred from the formula (\ref{R0}).

In this section we fix the parameters of the epidemics and simulate the propagation of the
disease and the vaccination process in the cases $\xi(k)=ak+b$ solving numerically the system of equations (\ref{eqConst}).

We suppose a threshold vaccination strategy of the form $\pi_t=\nu\;\mathds{1}_{[0,\theta]}$
in the period of time $[0,T]$ thus the global optimization cost is $$C(\pi)=C(\tau)=\int_0^T
c_II_t+c_V\nu\mathds{1}_{[0,\tau]} \phi \psi(\beta_t) dt.$$ We do not write explicitly the
dependence of the variables on $\tau$ to reduce notation. Let us remember that an agent of
degree $k$ will vaccinate according rate $ak+b\nu$ during the vaccination period $[0,\tau]$,
which is hidden in the probability generating function $g$.

Hence, the optimization problem reduces to find $\tau^*=\arg\min_\tau C(\tau)$. Thus, for each $\tau$ we run the numerical integration and select the optimal threshold that minimize this cost. 

We do this for two vaccination policies, constant and degree-proportional. In order to compare both strategies, we set $a=1$ and $b=\psi'(1)$, the mean number of neighbors, obtaining the same vaccination rate.

As we stated in Section \ref{meanfield}, the mean field model corresponds to take a
Poisson degree distribution. In \cite{brauer2012mathematical}, the authors establish that
Poisson distribution is not realistic for the modeling of contacts, and propose the use of a
bimodal distribution in which a proportion $p$ of the population follows a Poisson law of
mean $C$ and the rest a delta-$L$ distribution. If the whole population follows a delta
distribution the associated graph is called regular. Other distribution were also proposed in
\cite{Herrmann2020.04.02.20050468,MILLER2018192,PhysRevE.66.016128}, in particular the
power law, found in several social networks
\cite{smallWorldNet,liljeros2001web,lloyd2001viruses,pastor2001epidemic}.

Thus, we consider four different networks of size $N=10000$, associated to a correspondent
generating function of the degree distributions, the four with mean degree $\psi'(1)=5$:
\begin{enumerate}
    \item[(a)] Poisson: Degrees are distributed according a random variable Poisson $\mathcal{P}(\lambda)$ with parameter $\lambda=5$, which gives
        $\psi(z)=e^{5(z-1)}$.
    \item[(b)] Bimodal: a proportion $p=4/5$ has degree
    $\mathcal{P}(3)$ and the rest follows a
        delta distribution with parameter $13$, thus $\psi(z)=pe^{3(z-1)}+(1-p)z^{13}$.
    \item[(c)] Regular: all nodes have the same degree $5$, $\psi(z)=z^5$.
    \item[(d)] Power Law: the probability that a node has degree $k$ is given by
        $p_k=\frac{k^{-\alpha}e^{-k/\kappa}}{Li_\alpha(e^{-1/\kappa})}$ for $k$ in $\N$,
        $\alpha=1.474$ and $\kappa=100$, resulting
        $\psi(z)=\frac{Li_\alpha(z)(ze^{-1/\kappa)}}{Li_\alpha(e^{-1/\kappa})}$ where $Li_s(z)$ is
        the $s-$polylogarithm of $z$. We consider an exponential cut-off around $\kappa=100$ in order to have finite 	moments.	
\end{enumerate}

\begin{figure}[htb]
  \begin{subfigure}[b]{0.5\linewidth}
    \centering
    \includegraphics[width=0.75\linewidth]{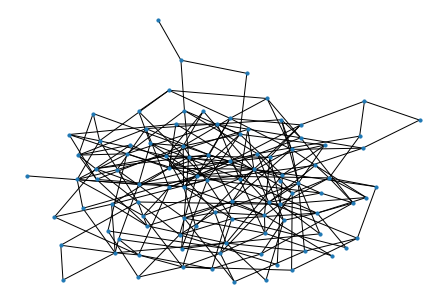}
    \caption{Poisson}
    \label{fig7:a}
    \vspace{4ex}
  \end{subfigure}
  \begin{subfigure}[b]{0.5\linewidth}
    \centering
    \includegraphics[width=0.75\linewidth]{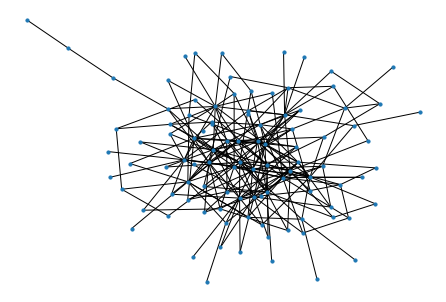}
    \caption{Bimodal}
    \label{fig7:b}
    \vspace{4ex}
  \end{subfigure}
  \begin{subfigure}[b]{0.5\linewidth}
    \centering
    \includegraphics[width=0.75\linewidth]{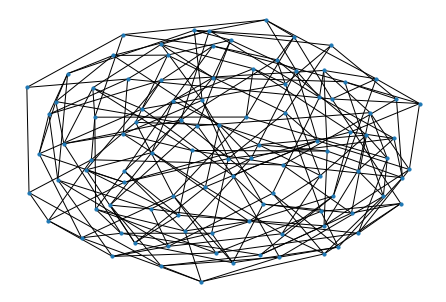}
    \caption{Regular}
    \label{fig7:c}
  \end{subfigure}
  \begin{subfigure}[b]{0.5\linewidth}
    \centering
    \includegraphics[width=0.75\linewidth]{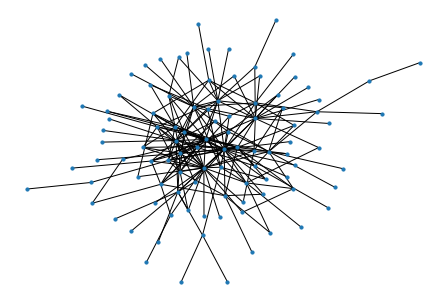}
    \caption{Power Law}
    \label{fig7:d}
  \end{subfigure}
  \caption{Instances of graphs when N=100 for the same mean degree $\psi'(1)=5$.}
  \label{grafos}
\end{figure}

Here we can see that the graph structure makes a difference
for the propagation or eradication of the
disease through the existence of hubs that could play the role of
super-spreaders, or almost disconnect the networks due to vaccination at higher rate. We summarize some network measures
of  the
generated graphs, and show the numerical solutions of system \eqref{MyEqZ} in each of them in order to compare
the spread of the epidemic. We use the \texttt{python} package
\texttt{networkx} for the graph creation and the calculus of the coefficients
\cite{hagberg2008exploring}, and we solve the system  \eqref{MyEqZ} using a Runge-Kutta 4.

We can see in the draw of the graph exposed in Figure \ref{grafos}, that the Bimodal network has more nodes of high degree than
a Poisson, and less than a Power Law network, according the results in
\cite{brauer2012mathematical}.

 For
the epidemic evolution we take $r=3$ and $\gamma=1$, and in  Table \ref{tab:R0} we can see that networks
(b) and (d) have a bigger basic reproduction number due to the size biased distribution and
the expected excess degree.

\begin{table}[htb]
\begin{center}
 \begin{tabular}{|l| c c|}
 \hline
 Graph & $\psi''(1)$ & $R_0 $\\ [0.5ex]
 \hline\hline
 (a) Poisson & 25 & 3.75  \\
 \hline
 (b) Bimodal & 61.5 & 5.76  \\
 \hline
 (c) Regular & 20 & 3.0  \\
 \hline
 (d) Power-Law & 80.31 & 12.03  \\
 \hline
\end{tabular}
\caption{Network quantities}
\label{tab:R0}
\end{center}
\end{table}

Typical parameters of the nodes of social networks are the clustering coefficient, which indicate the tendency of a node to form a cluster,
and the betweenness
centrality, measuring the presence of the node in shortest paths between different nodes, see  \cite{transivity}.
Both play an important role in the
propagation of a disease, the former due to potential contagion to larger groups of nodes, and the later because imply a quick connection
between different parts of the network. Also, the closeness coefficient measures how far apart are the nodes, see \cite{borgatti2005centrality}.
Proper definitions of these coefficients could be:
\begin{itemize}
\item Betweenness centrality of a node $v$ is the sum of the fraction of all-pairs shortest paths that pass through $v$.
\item The density for undirected graphs is $\frac{2m}{n(n-1)}$,
where $n$ is the number of nodes and $m$ is the number of edges.
\item Closeness centrality of a node $v$ is the reciprocal of the average shortest path distance to $v$ over all $n-1$ reachable nodes.
\item Degree centrality is defined as the number of links incident upon a node.
\end{itemize}
See \cite{newman2003structure} for more details.

In Table \ref{tab:cent} we show the average number for these measures after  several experiments with networks of $10^4$ nodes. We
 observe that density, related to the mean degree of the four networks, is similar. Nevertheless there are important differences
 between the averages of the clustering coefficients.

\begin{table}[htb]
\begin{center}
 \begin{tabular}{|l| c c c c|}
 \hline
 Graph & Betweenness  & Density & Clustering & Closeness\\
  & \footnotesize{$\times 10^4$ }& \footnotesize{$\times 10^4$ } & \footnotesize{$\times 10^4$ } &   \\
 \hline\hline
 (a) Poisson & 4.83 & 5.00 & 4.80 & 0.168 \\
 \hline
 (b) Bimodal & 3.77 & 5.06 & 11.46 & 0.18 \\
 \hline
 (c) Regular & 5.36 & 5.00 & 3.54 & 0.157 \\
 \hline
 (d) Power Law & 3.34 & 5.03 & 28.69 & 0.20 \\
 \hline
\end{tabular}
\caption{Average graph parameters.}
\label{tab:cent}
\end{center}
\end{table}

In Figure \ref{CostVariation}, we present three simulations for a Bimodal graph for different
optimization costs and vaccination rates, for the constant (abbreviated Const) and degree proportional (abbreviated Deg) vaccination strategies.
We plot the total cost, the final number of vaccinated and the epidemic size as a function of
the vaccination threshold $\tau$, obtaining the 
same behavior and shape in all the
studied cases, namely, the final vaccinated population is increasing in $\tau$, and the costs decrease up to the optimum.

\begin{figure}[htb]
    \centering 
\begin{subfigure}{0.3\textwidth}
  \includegraphics[width=\linewidth]{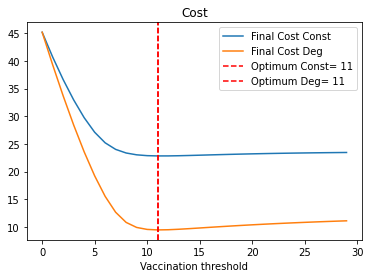}
\end{subfigure}\hfil 
\begin{subfigure}{0.3\textwidth}
  \includegraphics[width=\linewidth]{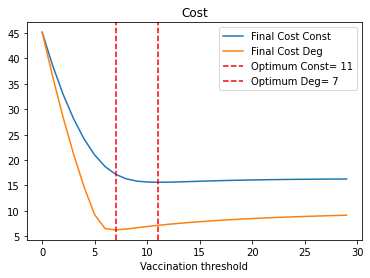}
\end{subfigure}\hfil 
\begin{subfigure}{0.3\textwidth}
  \includegraphics[width=\linewidth]{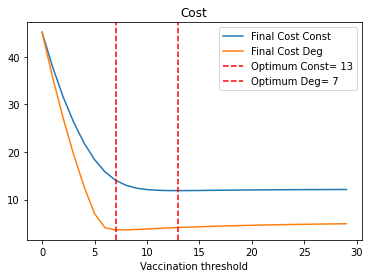}
\end{subfigure}
\medskip
\begin{subfigure}{0.3\textwidth}
  \includegraphics[width=\linewidth]{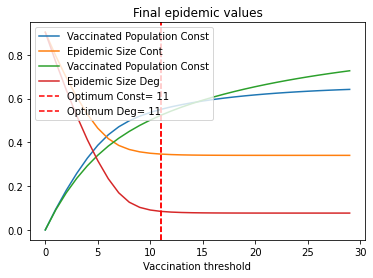}
  \caption{$c_V$=10, $c_I$=50, $\nu$=0.2}

  \label{fig:4}
\end{subfigure}\hfil 
\begin{subfigure}{0.3\textwidth}
  \includegraphics[width=\linewidth]{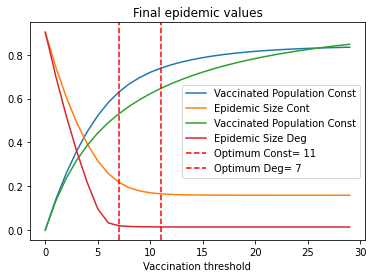}
  \caption{$c_V$=10, $c_I$=50, $\nu$=0.3}
  \label{fig:5}
\end{subfigure}\hfil 
\begin{subfigure}{0.3\textwidth}
  \includegraphics[width=\linewidth]{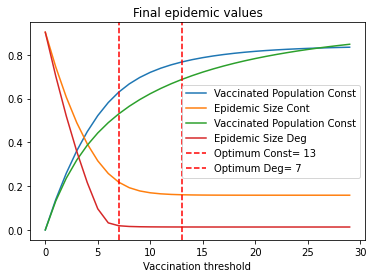}
  \caption{$c_V$=5, $c_I$=50, $\nu$=0.3}
  \label{fig:6}
\end{subfigure}
\caption{Threshold optimization for Bimodal Network. Const is for constant vaccination and Deg for degree-proportional. $c_V$ and $c_I$ are the vaccination and infection costs, and $\nu$ is the vaccination rate.} \label{CostVariation}
\end{figure}

We can see from the simulation of the optimization in Figure \ref{CostVariation} that the
epidemics size decreases notably with respect to the case without vaccination which
corresponds to the vaccination threshold $\tau=0$. On the other hand, we observe that the
epidemics size curve flattens around the value $\tau^*$ that minimizes the cost. Therefore, it
makes no sense (neither financially nor epidemiologically) to continue a vaccination program
after that time, because the final cost will be bigger and the population infected will not
decrease further with more vaccinated individuals.

In Figure \ref{evolution}, we present the result of the simulation for the variables $S$,
$I$, $R$ and $V$ when $\nu=0.2$, $c_V=10$ and $c_I=50$, and the vaccination threshold is obtained by solving the optimization problem.

\begin{figure}[htb]
  \begin{subfigure}[b]{0.5\linewidth}
    \centering
    \includegraphics[width=0.75\linewidth]{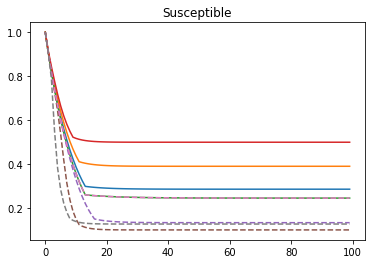}
    \caption{Susceptible over time}
  \end{subfigure}
  \begin{subfigure}[b]{0.5\linewidth}
    \centering
    \includegraphics[width=0.75\linewidth]{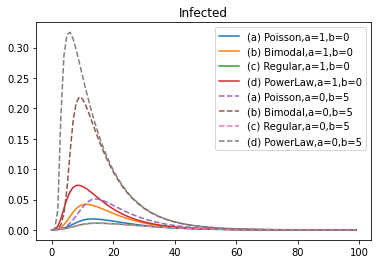}
    \caption{Infected over time}
  \end{subfigure}

  \begin{subfigure}[b]{0.5\linewidth}
    \centering
    \includegraphics[width=0.75\linewidth]{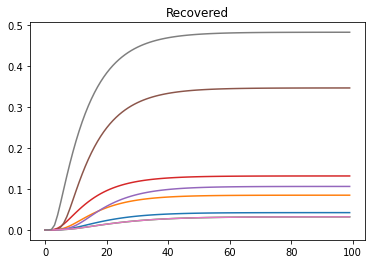}
    \caption{Recovered over time}
  \end{subfigure}
  \begin{subfigure}[b]{0.5\linewidth}
    \centering
    \includegraphics[width=0.75\linewidth]{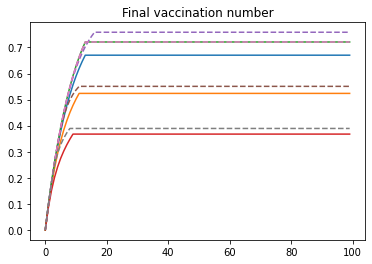}
    \caption{Vaccinated over time}
  \end{subfigure}

  \caption{Evolution of the epidemics indicators. Numerical integration of system \ref{eqConst} with $\xi(k)=ak+b$, solid lines are for $\xi(k)=k$ (degree dependent vaccination) and dashed for $\xi(k)=5$ (constant vaccination) in networks with the same mean degree equal to 5.}
   \label{evolution}
\end{figure}

We expect a faster
spread of the disease in the networks with a bigger clustering and closeness coefficients and
lower betwenness, from the ideas exposed in the last paragraph.

Comparing Figure \ref{evolution} with Tables \ref{tab:R0} and \ref{tab:cent} we can check
that the velocity of propagation of the disease and the maximum number of infected agents
is bigger in networks (b) and (d), where the basic reproduction number and the clustering and
closeness coefficients are bigger and the betwenness is smaller. Moreover, the size of the
epidemic in (d) is greater than the size for networks (a) and (c).

On the other hand, the final number of vaccinated is bigger in networks (a) and (c), almost
doubling  the number of vaccinations in network (d). The former are the most homogeneous in
terms of degree and mixing modeling, meaning that the vaccination rate of different
individuals are similar, while (d) have a big number of degree 1 individuals and a
considerable proportion of high-degree nodes, whose vaccination rates have great variability.

The bimodal and power law networks seem to best reflect the interactions between people
\cite{smallWorldNet,liljeros2001web,lloyd2001viruses,pastor2001epidemic}. In those cases,
we observe that the spreading of the disease is faster and infect a bigger proportion of the
network, due the presence of hubs, and the degree-dependent vaccination effect is not
enough to contain the epidemic

Let us note that for networks (b) and (d) we have $R_0\sim 10$, and the herd immunity is
reached when about $50\%$-$60\%$ of the population is infected or vaccinated. In a
complete network, we need about $90\%$ of the population recovered or vaccinated. Also,
for $R_0\sim 3-4$, we need about of $70\%$-$75\%$ in the complete network, while in
networks (a) and (c) the vaccinated and recovered agents  surpass the $90\%$ of the
population.

Finally, we present the results of the numerical integration for four networks with the same $R_0=3.75$. The degree distribution is the same as the used in the previous graphics, but with a proper selection of the parameters: $\lambda=5$ for the Poisson, $\lambda=3,L=8,p=0.73$ for the Bimodal, the degree of the Regular network is $6$, and for the Power Law we take $\alpha=2$ with exponential cutoff $\kappa=20$.

\begin{figure}[htb]
  \begin{subfigure}[b]{0.5\linewidth}
    \centering
    \includegraphics[width=0.75\linewidth]{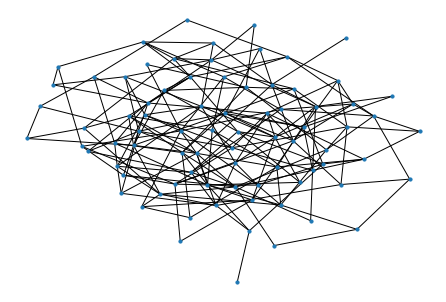}
    \caption{Poisson}
    \vspace{4ex}
  \end{subfigure}
  \begin{subfigure}[b]{0.5\linewidth}
    \centering
    \includegraphics[width=0.75\linewidth]{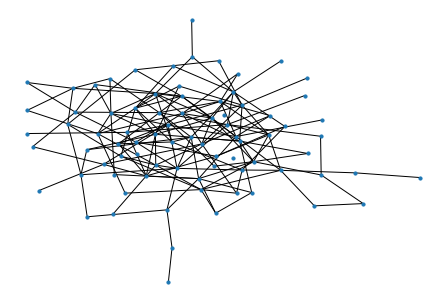}
    \caption{Bimodal}
    \vspace{4ex}
  \end{subfigure}
  \begin{subfigure}[b]{0.5\linewidth}
    \centering
    \includegraphics[width=0.75\linewidth]{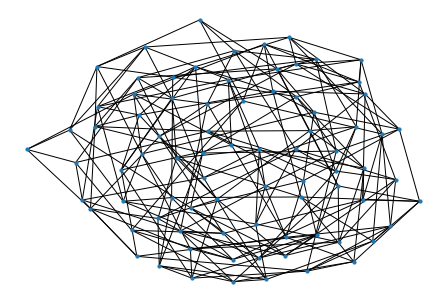}
    \caption{Regular}
  \end{subfigure}
  \begin{subfigure}[b]{0.5\linewidth}
    \centering
    \includegraphics[width=0.75\linewidth]{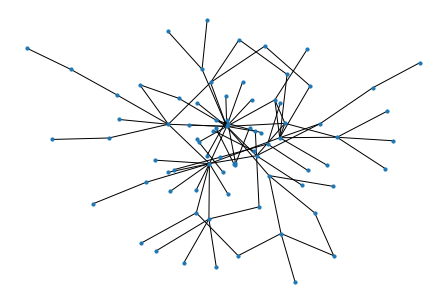}
    \caption{Power Law}
  \end{subfigure}
  \caption{Instances of graphs when N=80 for the same $R_0$.}
  \label{grafosmismoR0}
\end{figure}

In this case, the Power Law (d) network is very different to the other three. Its coefficients of centrality indicate that a more rapid spread of the disease is expected. Here, the role of the hubs is more relevant because the quotient $\psi''(1)/\psi'(1)$ is the same. This  is shown in the Table \ref{tab:centMismoR0}. We can also expect from Figure \ref{grafosmismoR0} that vaccinating the hubs, which will occur at high rate, will somehow cleave the network, stopping the propagation of the disease.

Effectively, in Figure \ref{evolutionMismoR0} we observe that the final size of the epidemic is substantially lower than in the other networks, and the final number of vaccinated is also much lower, indicating that in this case the network used to model the reality plays a central role.

From the presented instances, we can also observe that acquaintance vaccination is more effective in all the cases, arriving to lower numbers of infected and with a less number of vaccinated, with the exception of the case (c) of the regular network, where both vaccination strategies perform the same because the degree is the equal for all the nodes.

\begin{figure}[htb]
  \begin{subfigure}[b]{0.5\linewidth}
    \centering
    \includegraphics[width=0.75\linewidth]{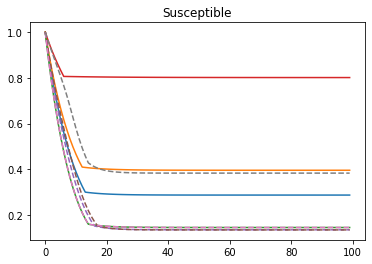}
    \caption{Susceptible over time}
  \end{subfigure}
  \begin{subfigure}[b]{0.5\linewidth}
    \centering
    \includegraphics[width=0.75\linewidth]{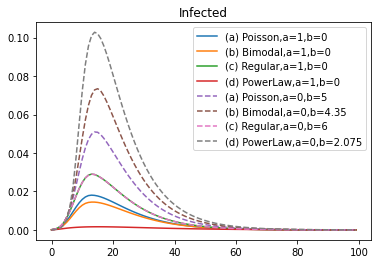}
    \caption{Infected over time}
  \end{subfigure}

  \begin{subfigure}[b]{0.5\linewidth}
    \centering
    \includegraphics[width=0.75\linewidth]{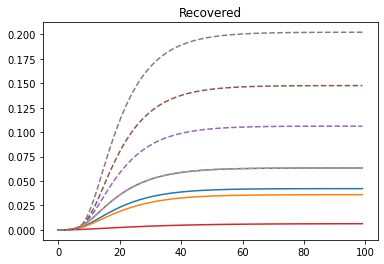}
    \caption{Recovered over time}
  \end{subfigure}
  \begin{subfigure}[b]{0.5\linewidth}
    \centering
    \includegraphics[width=0.75\linewidth]{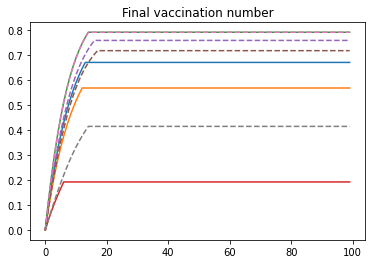}
    \caption{Vaccinated over time}
  \end{subfigure}
  \caption{Evolution of the epidemics indicators for the same $R_0$. Numerical integration of system \ref{eqConst} with $\xi(k)=ak+b$, solid lines is for $\xi(k)=k$ (degree proportional rate of vaccination) and dashed for $\xi(k)=1$ (constant rate of vaccination).}
   \label{evolutionMismoR0}
\end{figure}

\begin{table}[htb]
\begin{center}
 \begin{tabular}{|l| c c c c|}
 \hline
 Graph & Betweenness  & Density & Clustering & Closeness\\
  & \footnotesize{$\times 10^4$ }& \footnotesize{$\times 10^4$ } & \footnotesize{$\times 10^4$ } &   \\
 \hline\hline
 (a) Poisson & 3.47 & 4.88 & 4.92 & 0.219 \\
 \hline
 (b) Bimodal & 3.29 & 4.38 & 4.45 & 0.199 \\
 \hline
 (c) Regular & 3.19 & 5.99 & 4.22 & 0.239 \\
 \hline
 (d) Power Law & 2.24 & 2.18 & 5.68 & 0.109 \\
 \hline
\end{tabular}
\caption{Average of centrality coefficient for the same quotient $\psi''(1)/\psi'(1)$.}
\label{tab:centMismoR0}
\end{center}
\end{table}

\section{Conclusion}

We considered a generalization of the SIR model on a large configuration model adding a
vaccination strategy. The variables describing the dynamics were determined by a Markovian
contact process on a diverse population where individuals have different degrees in a
network of random connectivity, the vaccination mechanism depending on the number of
possible contacts of an individual.

We derived large graph limits for the evolution of the epidemics in this context and using
a function $g$ which is a variation of the initial susceptible population we derived a
system of seven equation that describe the SIRV dynamics. We analyze the particular case where the rate is linear in degree involving the generating probability function of the initial degree distribution $\psi$.

We then solved the associated optimal control problems for a general vaccination rate, proving existence and uniqueness
of the solutions under mild assumptions on cost functions. We also characterized the optimal
solutions as threshold type.

For this type of strategies, we computed the impact of the vaccination process in terms of
epidemic sizes, finding a fixed point equation that describes the impact of the vaccination intensity and the time on which it is developed through the function $g$ (or $\psi$) and depending on the proportion of $IS$ edges and the
vaccination rate.

Finally we studied four particular networks and their centrality coefficients, relating it with
the epidemic indicator $R_0$, also depending on $g$. Given a maximum vaccination rate and
for fixed disease parameters, we solved numerically the system of equations for the cases of degree-proportional and constant vaccination, on networks with the same mean degree first and then with the same mean excess degree. We
observed that in the networks that describe better the interaction of the individuals, the
epidemics level of infection can be significantly different than in homogeneous models.


\section{Proofs}\label{pruebas}

\subsection{Poisson Point Measures and Stochastic Differential Equations}
Inspired by \cite{decreusefond2012large} and \cite{fournierMeleard04} we will represent the
behavior of our dynamic as a process which is solution of a system of a stochastic
differential equations derived from Poisson Point Measures (PPM).

We will use three different PPM for each event which modifies the quantities we are
interested in: an infection, a recovery, or a vaccination. We need to identify the rates of this
events and how to update the measures on the graph.

Suppose an event occur at time $T$, and let us analyze the first case, an infection. For that, it
is convenient first to consider
\begin{equation}
\lambda_{T-}(k)=rk\frac{N^{IS}_{T-}}{N^S_{T-}},
\end{equation}
the rate of infection of a given $k$-degree individual at time $T$. They will have their
half-edges connected according to the quantities $\mu_T$ and distributed following a
multivariate hypergeometric distribution. We denote
\begin{equation}
p_{T-}(j,l,m\mid k-1)=\frac{ {N^{IS}_{T-}-1 \choose j-1}{N^{RS}_{T-} \choose l}{N^{VS}_{T-} \choose m}{N^{S}_{T-}-N^{RS}_{T-}-N^{IS}_{T-}-N^{VS}_{T-} \choose k-1-j-l-m}}{{N^{S}_{T-}-1 \choose k-1}}.
\end{equation}
Finally, given $k$, $j$, $l$ and $m$, we have to update the measures $\muis{T}$, $\murs{T} $
and $\muvs{T}$ choosing the infected, recovered and vaccinated individuals who will be
connected to the newly infected. In order to do that, we draw three vectors
$u=(u_1,...,u_{I_{T-}})$, $v=(v_1,...,v_{R_{T-}})$, and $w=(w_1,...,w_{V_{T-}})$ indicating how
many links each $I$, $R$ or $V$ node has with the newly infected. We consider
$\mathcal{U}=\bigcup_{n\in \mathbb{N}}(\mathbb{N}_0)^n$ and for each $\mu\in\M$ and
$n\in \N$ we define $$\U\supseteq \U(\mu,n):=\Big\{u=(u_1,...,u_{\pint{\mu}{\mathds{1}}})\ :
\sum_{i=1}^{\pint{\mu}{\mathds{1}}}u_i=n \text{ and } u_i \leq \zeta_i(\mu) \Big\},$$ where
$\zeta_i(\mu):=F^{-1}_\mu(i)$ is the degree according $\mu$ of the $i$-th node. In a similar way we define $v$, $w \in \mathcal{U}$ depending on the measures $\murs{T}$ and $\muvs{T}$.

Thus the
number of edges of type $IS$, $RS$ or $VS$ will be given respectively by
\begin{equation}
\begin{aligned}
\rho(u\mid j+1,\muis{T-})=&\frac{\prod_{i=1}^{I_{T-}}{\zeta_i(\muis{T-}) \choose u_i}}{{N^{IS}_{T-} \choose j+1}}\mathbf{1}_{u\in \U(\muis{T-},j+1)}, \\
\rho(v\mid l,\murs{T-})=&\frac{\prod_{i=1}^{R_{T-}}{\zeta_i(\murs{T-}) \choose v_i}}{{N^{RS}_{T-} \choose l}}\mathbf{1}_{u\in \U(\murs{T-},l)}, \\
\rho(w\mid m,\muvs{T-})=&\frac{\prod_{i=1}^{V_{T-}}{\zeta_i(\muvs{T-}) \choose w_i}}{{N^{VS}_{T-} \choose m}}\mathbf{1}_{w\in \U(\muvs{T-},m)}.
\end{aligned}
\end{equation}

We define
\[D(t,u,\mu)=\sum_{i=1}^{\pint{\mu_{t}}{\mathds{1}}}\delta_{\zeta_i(\mu_{t})-u_i}-\delta_{\zeta_i(\mu_{t})}\]
and
\[D_f(t,u,\mu)=\sum_{i=1}^{\pint{\mu_{t}}{\mathds{1}}}f(\zeta_i(\mu_{t})-u_i)-f(\zeta_i(\mu_{t})).\]

Then, we update our measures as follows, introducing some notation:

\begin{equation}
\begin{aligned}
\mus{T}&=\mus{T-}-\delta_k=\mus{T-}+\Delta_1^{S}(T-), \\
\muis{T}&=\muis{T-}+\delta_{k-(j+l+m+1)}+D(T,u,\omu{IS}{})=\muis{T-}+\Delta_1^{IS}(T-),\\
\murs{T}&=\murs{T-}+D(T,v,\murs{})=\murs{T-}+\Delta_1^{RS}(T-),\\
\muvs{T}&=\muvs{T-}+D(T,w,\muvs{})=\muvs{T-}+\Delta_1^{VS}(T-).
\end{aligned}
\end{equation}

\bigskip Another event in consideration is a recovering. Here we choose uniformly an
infected $i$ and set:
\begin{equation}
\begin{aligned}
\muis{T}&=\muis{T-}-\delta_{\zeta_i(\muis{T-})}=\muvs{T-}+\Delta_2^{IS}(T-), \\
\murs{T}&=\murs{T-}+\delta_{\zeta_i(\muis{T-})}=\murs{T-}+\Delta_2^{RS}(T-).
\end{aligned}
\end{equation}
This happens with probability $1/I_{T-}$.

\bigskip
The last event is vaccination. The corresponding rate  is $\pi_t N_t^S$. We remark the strong
dependence on the degree of the individual, because it is more probable that a higher
connectivity node to be vaccinated first. More precisely, the probability that the new
vaccinated has degree $k$ is $\frac{k\mus{T-}(k)}{N^S_{T-}}$. Once we draw the vaccinated
individual, and supposing their degree is $k$, we update the measures as follows:
\begin{equation}
\begin{aligned}
\mus{T}&=\mus{T-}-\delta_k=:\mus{T-}+\Delta_3^{S}(T-), \\
\muis{T}&=\muis{T-}+D(T,u,\muis{})=:\muis{T-}+\Delta_3^{IS}(T-),\\
\murs{T}&=\murs{T-}+D(T,v,\murs{})=:\murs{T-}+\Delta_3^{RS}(T-),\\
\muvs{T}&=\muvs{T-}+\delta_{k-(j+l+m)}+D(T,w,\muvs{})=:\muvs{T-}+\Delta_3^{VS}(T-).
\end{aligned}
\end{equation}

Now we introduce three Poisson Point Measures that will be very useful to describe the
$\M$-valued stochastic process $(\mu_t)_{t\geq 0}$. For a similar point of view, see
\cite{decreusefond2012large} or \cite{fournierMeleard04}.

The first one will provide us the possible instant in which an infection occurs. We define
$dN_1(s,k,\theta_1,j,l,m,\theta_2,u,\theta_3,v,\theta_4,w,\theta_5)$ as a product measure
on $\R_+\times E_1$ with $E_1= \N_0 \times \R_+ \times (\N_0)^3 \times \R \times
(\U\times\R_+)^3$, where $ds$ and $d\theta$ are Lebesgue measures and $dn$ are
counting measures on $\N_0$ or $\U$, accordingly.

The degree $k$ infected agent will be connected with $j$ infected, $l$ recovered and $m$
vaccinated agents, drawn according $u,v$ and $w$ as we explained above.

We also have $dN_2(s,i)$ on $E_2=\R_+\times \N$ a PPM with intensity $\gamma$ for the
recovering process. That is, for each atom we have associated a possible recovering time $s$
and the identification number $i$ of the new recovered.

The last PPM, $dN_3(s,k,\theta_1,j,l,m,\theta_2,u,\theta_3,v,\theta_4,w,\theta_5)$ is
defined in $\R_+\times E_3$ where $E_3=E_1$ and it is very similar to the first one. It assigns
a mass to each possible time $s$ where a degree $k$ vaccinated agent is connected with $j$
infected, $l$ recovered and $m$ vaccinated agents, drawn according $u,v$ and $w$.

In all the cases, the auxiliary variables $\theta$ are useful to take into account the rates in this
integral representation.

In order to simplify notation we will not write the dependency on the variables, and consider
the following indicator functions to represent the rates:
\begin{equation}
\begin{aligned}
I_1=&I_1(s,k,\theta_1,j,l,m,\theta_2,u,\theta_3,v,\theta_4,w,\theta_5)  \\
=& \mathds{1}_{\theta_1 \leq \lambda_{s-}(k)\mus{s-}(k)}\mathds{1}_{\theta_2 \leq p_{s-}(j,l,m\mid k-1)} \mathds{1}_{\theta_3\leq \rho(u\mid j+1,\muis{s-})} \\
 &\qquad\mathds{1}_{\theta_4\leq \rho(v\mid l,\murs{s-})}\mathds{1}_{\theta_5\leq \rho(w\mid m,\muvs{s-})}, \\
I_2=&I_2(s,i)=\mathds{1}_{i\leq I_{s-}}, \\
I_3=&I_3(s,k,\theta_1,j,l,m,\theta_2,u,\theta_3,v,\theta_4,w,\theta_5) \\
=& \mathds{1}_{\theta_1 \leq \pi_{s-}(k)\mus{s-}(k)}\mathds{1}_{\theta_2 \leq p_{s-}(j,l,m\mid k)} \mathds{1}_{\theta_3\leq \rho(u\mid j,\muis{s-})} \\
&\qquad \mathds{1}_{\theta_4\leq \rho(v\mid l,\murs{s-})}\mathds{1}_{\theta_5\leq \rho(w\mid m,\muvs{s-})}.
\end{aligned}
\end{equation}


Now that we have clarified the evolution of the measures according to the events that may occur,
we are ready to write an integral form for this evolution in terms of the Poisson Point
Measures, for example for the second coordinate
\begin{equation}
\muis{t}=\muis{0}+\int_0^t\sum_{k=1}^3\int_{E_k}\Delta_k^{IS}(s)I_kdN_kds.
\end{equation}
Doing the same for the four coordinates, we can write the system of Stochastic Differential
Equations:
\begin{equation}\label{SDE}
\omu{}{t}=\omu{}{0}+\int_0^t\sum_{k=1}^3\int_{E_k}\Delta_k(s)I_kdN_kds.
\end{equation}

\begin{proposition}

Given $\mu_{0}=(\mus{0},\muis{0},\murs{0},\muvs{0})$ and $N_1,N_2,N_3$ there exists a
unique strong solution to the system (\ref{SDE}) in the Skorokhod space
$\mathcal{D}(\R_+,(\M)^4)$.

\end{proposition}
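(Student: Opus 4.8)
The plan is to use that, for a fixed number $n$ of nodes, the process defined by \eqref{SDE} is a pure-jump process whose \emph{total jump intensity is bounded} by a deterministic constant; once this is established, both existence and pathwise uniqueness follow from the classical interlacing construction, building the trajectory by induction over its successive (almost surely locally finite) jump times, in the spirit of \cite{fournierMeleard04,decreusefond2012large}.

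First I would bound the three intensities. Since the four measures are carried by the finite set of degrees present among the $n$ nodes and each has total mass at most $n$, every quantity entering the rates is bounded. The total infection rate telescopes, $\sum_k \lambda_{s-}(k)\,\mus{s-}(k)=\sum_k rk\,\tfrac{N^{IS}_{s-}}{N^S_{s-}}\,\mus{s-}(k)=r\,N^{IS}_{s-}\le r\,N^S_{s-}$, which is at most $r$ times the (fixed) number of half-edges; the recovery rate is $\gamma I_{s-}\le\gamma n$; and, because $\pi$ is uniformly bounded by $\nu$ and $\xi$ is bounded on the finite degree support, the vaccination rate $\sum_k \pi_{s-}(k)\,\mus{s-}(k)$ is bounded as well. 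Hence there is a deterministic $\Lambda=\Lambda(n,r,\gamma,\nu)$ dominating the intensity of the thinned measure $I_1\,dN_1+I_2\,dN_2+I_3\,dN_3$ uniformly over all reachable configurations.

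Given this bound, I would construct the solution by interlacing. Starting from $\mu_0$, keep $\mu$ constant until the first atom of some $N_k$ whose acceptance indicator $I_k$, evaluated against the current configuration, equals one; then apply the corresponding increment $\Delta_k(s-)$ and repeat from the updated state. The indicators $I_1,I_2,I_3$ ensure that each accepted atom corresponds to a feasible multivariate-hypergeometric draw, so every increment leaves the four coordinates in the admissible subset of $(\M)^4$ (non-negative measures consistent with $N^S$, $N^{IS}$, $N^{RS}$, $N^{VS}$) and the resulting object is a bona fide element of $\D(\R_+,(\M)^4)$. Because the intensity is dominated by $\Lambda$, the number of jumps on any $[0,t]$ is stochastically dominated by a Poisson$(\Lambda t)$ variable, so the jumps do not accumulate and the construction yields a global, piecewise-constant, c\`adl\`ag solution. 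The construction is moreover adapted, since at each stage it only inspects atoms posterior to the current jump time and indicators depending on the already-built path; this gives existence of a strong solution.

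For uniqueness I would argue pathwise. If $\mu$ and $\tilde\mu$ both solve \eqref{SDE} for the same initial datum $\mu_0$ and the same realizations of $N_1,N_2,N_3$, then both remain equal to $\mu_0$ up to their first jump, and since the indicators $I_k$ depend only on the common current state they accept exactly the same first atom, producing the same jump time and the same increment $\Delta_k$. Iterating along the almost surely finite (on compacts) list of jump times gives $\mu\equiv\tilde\mu$. The hard part is not a Gronwall or moment estimate as in density-dependent models, but the two structural checks that make the interlacing work: that the total intensity is bounded (resting on $N^{IS}_{s-}\le N^S_{s-}$ and the finiteness of the degree support) and that the thinning indicators genuinely preserve the admissibility of the configuration after each update.
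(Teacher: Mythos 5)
Your proposal is correct and follows essentially the same route as the paper, whose proof consists of noting that the four measures are dominated by the initial total mass with bounded supports (hence bounded jump intensities) and then deferring to the standard jump-by-jump construction of Tran's thesis. Your write-up simply makes explicit the interlacing construction and pathwise-uniqueness induction that this reference carries out.
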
\begin{proof}
First note that all the measures are dominated by the expectation of
$\mus{0}+\muis{0}+\murs{0}+\muvs{0}$ and the supports are bounded on the positive
integers. The proof can be completed in the same way as in \cite{theseTran}.
\end{proof}

\subsection{Renormalization}
Inspired by the techniques developed in \cite{decreusefond2012large} and
\cite{fournierMeleard04} we write a renormalization of the system when the number of
individuals is $n$ and the number of edges is proportional to $n$. We observe that the
intensity of the jump process has the same order, and deduce the scaling for the fluid limit
renormalization. We prove the convergence of the solution of the finite case system of
equations to the solution of (\ref{SDE}) in the weak sense of the Skorokhod space
\cite{skorokhod}.

We consider four sequences of measures indexed by $n\in \N$,
$(\mu^{n,S})$, $(\mu^{n,IS})$, $(\mu^{n,RS})$ and $(\mu^{n,VS})$ satisfying the system of
equations (\ref{SDE}) for each $n\in\N$ with initial conditions
$\mu^{n,S}_0,\mu^{n,IS}_0,\mu^{n,RS}_0$ and $\mu^{n,VS}_0$. We associate $S^n_t,
I^n_t,R^n_t$ and $V^n_t$ the number of individuals in each state at time $t$ and denote
$\mathcal{S}_t,\mathcal{I}_t,\mathcal{R}_t,\mathcal{V}_t$ the sets of the nodes susceptible,
infected, recovered and vaccinated, respectively.

We take the scaling $\musn{t}=\frac{1}{n}\mu^S_t$ for each $t\leq 0$ and analogously, $\muisn{t}$, $\mursn{t}$ and $\muvsn{t}$. We denote $N^{(n),S}_t= \pint{ \musn{t}}{\chi}$ and
$S^{(n)}_t=\pint{\musn{t}}{\mathds{1}}$ and, accordingly, $N^{(n),IS}_t$, $N^{(n),RS}_t$,
$N^{(n),VS}_t$, $I^{(n)}_t$, $R^{(n)}_t$ and $V^{(n)}_t$.

Finally, we scale the rates and the indicator functions associated,
$\lambda^n_t(k)=rk\frac{\N^{n,IS}_t}{\N^{n,S}_t}$, and $p^n_{t}(j,l,m\mid k-1)=\frac{
{N^{n,IS}_{t}-1 \choose j-1}{N^{n,RS}_{t} \choose l}{N^{n,VS}_{t} \choose
m}{N^{n,S}_{t}-N^{n,RS}_{t}-N^{n,IS}_{t}-N^{n,VS}_{t} \choose k-1-j-l-m}}{{N^{n,S}_{t}-1
\choose k-1}}$,
 \begin{align*}
I_1^{(n)}=&I^{(n)}_1(s,k,\theta_1,j,l,m,\theta_2,u,\theta_3,v,\theta_4,w,\theta_5)\\
=& \mathds{1}_{\theta_1 \leq \lambda^n_{s-}(k)n\musn{s-}(k)}\mathds{1}_{\theta_2 \leq p^n_{s-}(j,l,m\mid k-1)} \mathds{1}_{\theta_3\leq \rho(u\mid j+1,n\muisn{s-})} \\
&\qquad \mathds{1}_{\theta_4\leq \rho(v\mid l,n\mursn{s-})}\mathds{1}_{\theta_5\leq \rho(w\mid m,n\muvsn{s-})}, \\
I_2^{(n)}=&I^{(n)}_2(s,i)=\mathds{1}_{i\leq I^n_{s-}}, \\
I^{(n)}_3=&I^{(n)}_3(s,k,\theta_1,j,l,m,\theta_2,u,\theta_3,v,\theta_4,w,\theta_5) \\
=& \mathds{1}_{\theta_1 \leq \pi_{s-}(k)\mus{s-}(k)}\mathds{1}_{\theta_2 \leq p^n_{s-}(j,l,m\mid k)} \mathds{1}_{\theta_3\leq \rho(u\mid j,n\muisn{s-})} \\ &\qquad \mathds{1}_{\theta_4\leq \rho(v\mid l,n\mursn{s-})}\mathds{1}_{\theta_5\leq \rho(w\mid m,n\muvsn{s-})}.
\end{align*}

We assume that the sequences of initial conditions converge weakly in $\M$ to
$\mus{0},\muis{0},\murs{0}$ and $\muvs{0}$ when $n$ goes to infinity.

We obtain the renormalized system

\begin{equation}\label{SDEn}
\mu^{(n)}_{t}=\mu^{(n)}_{0}+\frac{1}{n}\int_0^t\sum_{k=1}^3\int_{E_k}\Delta^{(n)}_k(s)I^{(n)}_kdN_kds.
\end{equation}

Let us define \[\Lambda_t=\sum_{k\in \mathbb{N}} \lambda^{(n)}_t(k)\musn{t}(k)
\sum_{j+l+m\leq k-1}p_t^n(j,l,m \mid k-1) \sum_{u\in \mathcal{U}}\rho(u\mid j+1,\muisn{t})
\] and \[ \Pi_t=\sum_{k\in\mathbb{N}} \pi_tk\musn{t}(k)\sum_{j+l+m\leq k}p_t^n(j,l,m \mid k)
\sum_{u\in \mathcal{U}}\rho(u\mid j,\muisn{t}).\]

\begin{proposition}
For all $f\in \mathcal{B}_b(\mathbb{N})$ and all $t\geq 0$ we have the following
decomposition \[ \pint{\muisn{t}}{f}=\sum_{k\in\mathbb{N}}f(k)\muisn{0}(k)+A_t^{(n),IS,f} +
M_t^{(n),IS,f}, \] where the finite variation is given by
\begin{equation}
\begin{aligned}
A_t^{(n),IS,f} = &\int_0^t \Lambda_s \left(f(k-(j+l+m+1))+ D_f(s,u,\muis{}) \right) ds \\
  &-\int_0^t \gamma \pint{\muisn{s}}{f}ds + \int_0^t \Pi_s D_f(s,u,\muis{}) ds, \end{aligned}
\end{equation}
and the associated martingale is square integrable with quadratic variation,
\begin{align*}
\left\langle M^{(n),IS,f} \right\rangle_t =&\frac{1}{n}\int_0^t \Lambda_s
\left(f(k-(j+l+m+1))+D_f(s,u,\muis{})\right)^2 ds  \\& + \frac{1}{n} \int_0^t \gamma \pint{\muisn{s}}{f^2}ds + \frac{1}{n} \int_0^t \Pi_s (D_f(s,u,\muis{}))^2 ds.
\end{align*}
\end{proposition}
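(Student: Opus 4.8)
The plan is to read off the decomposition directly from the renormalized stochastic differential equation (\ref{SDEn}) by testing against $f$ and then splitting each Poisson integral into its predictable compensator plus a compensated martingale, as in \cite{fournierMeleard04} and \cite{decreusefond2012large}. First I would pair (\ref{SDEn}) with $f\in\B_b(\N)$ on the $IS$-coordinate, writing $\pint{\muisn{t}}{f}$ as $\sum_k f(k)\muisn{0}(k)$ plus $\frac1n\sum_{k=1}^3\int_0^t\int_{E_k}\pint{\Delta_k^{IS}(s)}{f}\,I_k^{(n)}\,dN_k\,ds$. Using the explicit increments recorded in Section 7.1, the $f$-pairings of the jump measures are $f(k-(j+l+m+1))+D_f(s,u,\muis{})$ for an infection, $-f(\zeta_i(\muis{s-}))$ for a recovery, and $D_f(s,u,\muis{})$ for a vaccination; these are exactly the integrands appearing in $A_t^{(n),IS,f}$ and in the stated quadratic variation.

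Next I would compute the compensator of each Poisson integral. Since $N_k$ carries Lebesgue--counting intensity and the law of an event is encoded through the indicators $I_k^{(n)}$, integrating out the auxiliary variables $\theta_1,\dots,\theta_5$ reproduces the rates and the hypergeometric/multinomial selection probabilities: $\int \mathds{1}_{\theta_1\le\lambda^n_{s-}(k)n\musn{s-}(k)}\,d\theta_1 = \lambda^n_{s-}(k)n\musn{s-}(k)$, and likewise the $\theta_2,\dots,\theta_5$ integrations return the $p^n$ and $\rho$ factors. Combined with the $\frac1n$ prefactor, the infection and vaccination intensities collapse to the quantities $\Lambda_s$ and $\Pi_s$ defined before the statement, while the recovery term, whose intensity is $\gamma$ summed over the $I^n_{s-}$ infected nodes, collapses to $-\gamma\pint{\muisn{s}}{f}$. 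Collecting these three predictable integrals yields precisely $A_t^{(n),IS,f}$.

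Then $M_t^{(n),IS,f}$ is the sum over $k=1,2,3$ of the compensated integrals $\frac1n\int_0^t\int_{E_k}\pint{\Delta_k^{IS}(s)}{f}\,I_k^{(n)}\,(dN_k-d\widehat N_k)\,ds$. These are a priori only local martingales, so I would first identify the quadratic variation via the isometry $\langle \int H\,(dN-d\widehat N)\rangle_t=\int_0^t\int H^2\,d\widehat N$ and then use its finite expectation to promote them to genuine square-integrable martingales. Each jump of $\pint{\muisn{}}{f}$ has size $\frac1n\pint{\Delta_k^{IS}}{f}$, so squaring contributes a factor $\frac1{n^2}$ while integrating the indicators against the intensity restores a factor $n$ in the rate; the surviving $\frac1n$ is exactly the prefactor in the claimed $\langle M^{(n),IS,f}\rangle_t$, with $\Lambda_s$, $\gamma$ and $\Pi_s$ weighting the squared infection, recovery and vaccination integrands. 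Because $f$ is bounded and all four coordinate measures are dominated by the total initial mass $\pint{\musn{0}+\muisn{0}+\mursn{0}+\muvsn{0}}{\mathds{1}}$ with integer-bounded support (as in the preceding existence proposition), one gets $\mathbb{E}[\langle M^{(n),IS,f}\rangle_t]<\infty$, which upgrades the local martingales to square-integrable martingales.

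The main obstacle is the careful bookkeeping of the two competing scalings: one must track how the $\frac1n$ in (\ref{SDEn}) cancels against the $n\musn{}$ hidden inside the indicators $I_k^{(n)}$ so that $A_t^{(n),IS,f}$ emerges at order $O(1)$ while $\langle M^{(n),IS,f}\rangle_t$ emerges at order $O(1/n)$; this $O(1/n)$ is precisely what will later force the martingale part to vanish in the fluid limit. The only genuinely analytic point, as opposed to bookkeeping, is the finiteness of $\mathbb{E}[\langle M^{(n),IS,f}\rangle_t]$, which rests on the uniform domination of the measures and the boundedness of $f$ rather than on any new estimate.
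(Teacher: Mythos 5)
Your proposal is correct, and the bookkeeping matches the statement: the infection jump pairs with $f$ as $f(k-(j+l+m+1))+D_f(s,u,\muis{})$, the recovery jump as $-f(\zeta_i(\muis{s-}))$, the vaccination jump as $D_f(s,u,\muis{})$, and the cancellation of the $1/n$ prefactor against the factor $n\musn{s-}$ hidden in the indicators produces an $O(1)$ drift and an $O(1/n)$ bracket, exactly as claimed. However, you take a genuinely different route from the paper. The paper's proof (following Fournier--M\'el\'eard) computes the infinitesimal generator $\mathcal{L}$ of the measure-valued Markov process, writes the L\'evy/Dynkin martingales associated with $\phi=\pint{\mu}{f}$ and with $\phi^2$, and then identifies the predictable bracket by the integration-by-parts formula, i.e. as the compensator $\int_0^t\bigl(\mathcal{L}\phi^2-2\phi\,\mathcal{L}\phi\bigr)(\mu_s)\,ds$ (the carr\'e du champ). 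You instead work directly on the stochastic differential equation (\ref{SDEn}) driven by the three Poisson point measures: you split each integral into its predictable compensator, obtained by integrating out the auxiliary variables $\theta_1,\dots,\theta_5$ so that the rates and the hypergeometric/selection probabilities reappear, plus a compensated local martingale, and you read the bracket off the $L^2$-isometry for compensated Poisson integrals, with a localization step to upgrade to genuine square-integrable martingales. Both routes are standard and ultimately rest on the same moment bounds; yours has the advantage of exploiting the PPM representation already constructed in Section 7.1 and of making the competing $n$-scalings completely explicit, while the generator route delegates the stochastic-calculus technicalities to a single application of Dynkin's formula and the cited reference. One point you should make explicit: the reason $\left\langle M^{(n),IS,f}\right\rangle_t$ is simply the sum of the three individual brackets is that $N_1$, $N_2$, $N_3$ are independent, so the three compensated integrals almost surely never jump simultaneously and their cross-brackets vanish.
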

\begin{proof}(Sketch)
We first calculate the infinitesimal generator $\mathcal{L}$ of our process, and we write the
Levy's martingale with $\phi=\pint{\mu}{f}$ and $\phi^2$. Then we apply the integration by
parts formula \cite{RevuzYor}, and identifying the martingales in the expression, we
rearrange the terms in order to get the quadratic variation. For a detailed proof see
\cite{fournierMeleard04}.
\end{proof}

Our fluid limit result may be proved in the same way as the proof of the main theorem of
\cite{decreusefond2012large} but we add it for completeness.
\begin{proof}[Proof of Theorem \ref{mainTh}]
Let us consider, for any $\varepsilon \leq 0$ and $A>0$, the closed set of $\M$,\[\mathcal{M}_{\varepsilon,A}={\mu \in \M : \pint{\mu}{1+\chi^5}\leq A \text{ and } \pint{\mu}{\chi}\geq \varepsilon},\] and $\mathcal{M}_{0+,A}=\bigcup_{\varepsilon>0}\mea$. Some topological properties of this can be found in the appendix of \cite{decreusefond2012large} or \cite{fournierMeleard04}.

We suppose that $\mu_0^{(n)}$ converges to $\mu_0$ and that $\mu^{(n)}_0 \in \moa^4$ for any $n$, with $\pint{\mu_0^{IS}}{\chi}>0$.

We also make the assumption $\pint{\pi_t\mu_t}{1} \leq \nu \sum_k\mu^S_0(k)\xi(k)< C \langle \mu^S_0,\chi^j\rangle$ for $j=3$ just in order to keep the hypothesis of finite $5$-th. moment, although we can change it to finite $j+2$-th. moment if necessary imposing more restrictive conditions but deriving the result making the changes needed.

In order to prove (ii), since $\lim_{\varepsilon' \to 0}t_{\varepsilon'}=\infty$, is enough to
prove the result in $\mathcal{D}([0,t_{\varepsilon'}],\mathcal{M}_{0,A}^4)$ for $\varepsilon'$
sufficiently small. From now on, we take $0< \varepsilon < \varepsilon'< \left\langle
\muis{0},\chi \right\rangle$.

\textit{Step 1: Tightness of the renormalization.} Take $(\mun{})_{n\in \N}$, $t\in \R_{>0}$
and $n\in \N$. By assumptions, we have:
\begin{equation}
\begin{array}{c}
\pint{\musn{t}}{1+\chi^5}+\pint{\muisn{t}}{1+\chi^5}+\pint{\mursn{t}}{1+\chi^5}+\pint{\muvsn{t}}{1+\chi^5}\leq \\ \leq \pint{\musn{0}}{1+\chi^5}+\pint{\muisn{0}}{1+\chi^5}\leq 2A
\end{array}
\end{equation}

This implies that the sequence $\mu_t^{(n)}$ is tight for each $t$. By the criterion of
convergence of measure valued processes proposed by Roelly \cite{roelly1986criterion} we
have to prove that, for each test function $f\in \mathcal{B}_b(\N)$, \newline
$\left(\pint{\musn{}}{f},\pint{\muisn{}}{f},\pint{\mursn{}}{f},\pint{\muvsn{}}{f}\right)_{n\in\N}$ is tight in
$\mathcal{D}(\R_{>0},\R^4)$.

We present here the calculations only for $\pint{\muisn{}}{f}$ because the others are similar
or simpler. Since we have a semimartingale decomposition, applying the Rebolledo criterion
for weak convergence of sequences of semimartingales, we have to prove that both the finite
variation part, and the quadratic variation satisfy the Aldous criterion. We want to prove that,
for all $\theta >0$ and $\eta >0$ there exist $n_0\in \N$ and $\delta >0$ such that for all
$n>n_0$ and for all stopping times $S_n$ and $T_n$ with $S_n<T_n<S_n+\delta$ we have
\begin{eqnarray}
\label{varPart}P(|A_{T_n}^{(n),IS,f}-A_{S_n}^{(n),IS,f}|>\eta)\leq \theta, \\
\label{martPart}P(|\langle M^{(n),IS,f}\rangle_{T_n}-\langle M^{(n),IS,f}\rangle_{S_n}|>\eta)\leq \theta.
\end{eqnarray}
For the finite variation condition (\ref{varPart}), we take the following bound:
\begin{align*}
& \E{|A_{T_n}^{(n),IS,f}-A_{S_n}^{(n),IS,f}|} \leq \E{\int_{S_n}^{T_n} \gamma \ninf \pint{\muisn{s}}{\mathbf{1}}ds} \\
& \qquad + \E{\int_{S_n}^{T_n} \sumn{k}\lambda^n_s(k)\musn{s}(k)\sum_{j+l+m\leq k-1}p_s^n(j,l,m|k-1)(2j+1)\ninf ds} \\
& \qquad + \E{\int_{S_n}^{T_n}\sumn{k}\pi^n_s(k)\musn{s}(k)\sum_{j+l+m\leq k}p_s^n(j,l,m|k-1)2j\ninf ds}.
\end{align*}
Since $\sum_{j+l+m\leq k}p_s^n(j,l,m|k-1)2j$ is   twice the mean number of edges with the
infected population conditioned to having degree $k$, this number is bounded by $k$, and
using the definitions of $\lambda^n$, $\pi$ and $p$ we have that:
\begin{equation}
\begin{aligned}
\E|A_{T_n}^{(n),IS,f}-&A_{S_n}^{(n),IS,f}| \leq \;
\delta \mathbb{E} \left[ \gamma \ninf (S_0^{(n)}+I_0^{(n)}) \right. \\
& \; \left. + r\ninf \pint{\musn{0}}{2\chi^2+3\chi}+\nu\ninf\pint{\musn{0}}{2\chi^2} \right] <\infty.
\end{aligned}
\end{equation}
Then, applying Markov's inequality:
\[P(|A_{T_n}^{(n),IS,f}-A_{S_n}^{(n),IS,f}|>\eta)\leq \frac{(2\gamma+5r+2\nu)\ninf \delta A}{\eta}\]
which is smaller than $\theta$ if $\delta$ is small enough.

We bound the quadratic variation of the martingale reasoning analogously,
\begin{align*}
\E{|\langle M^{(n),IS,f}\rangle_{T_n}-\langle M^{(n),IS,f}\rangle_{S_n}|} \leq &\;
 \E{\frac{\delta \gamma \ninf^2 (S_0^{(n)}+I_0^{(n)})}{n}} \\ &
 + \E{\frac{\delta r \ninf^2 \pint{\musn{0}}{\chi(2\chi+3)^2}}{n}} \\ &
 +\E{\frac{\delta \nu \ninf^2 \pint{\musn{0}}{\chi^3}}{n}} \\ \leq & \; \frac{(25r+2\gamma +\nu)\delta \ninf^2 A}{n},
\end{align*}
and applying Markov in the same way, we have the condition for the martingale part
(\ref{martPart}), so we are in the hypothesis of Aldous-Rebolledo criterion. Therefore, we
have proved tightness in $\mathcal{D}(\R_+,\mathcal{M}_{0,4}^4)$.

We now prove the uniqueness of the solution. Before, observe that, by Step 1 and
Prohorov's theorem, the laws of $\mu^{(n)}$ for $n\in \N$ are a family of bounded measures,
a precompact set in $\mathcal{D}(\R_+,\mathcal{M}_{0,4}^4)$. Hence, also are the laws of
the stopped processes $(\mu^{(n)}_{\cdot \wedge \tau^n_\varepsilon})_{n\in\N}$.

Let $\mu$ be a limit point in $\mathcal{C}(\R_+,\mathcal{M}_{0,4}^4)$ of the sequence of
stopped processes and let $(\mu^{(n)})_{n\in\N}$ be a subsequence that converges to
$\mu$, denoted with only one over-script to simplify notation. Since the limit is continuous,
the convergence is uniform over compact sets of the positive reals.

Define, for all $t\in\R_+$ and $f\in \mathcal{C}_b(\N)$ the applications
$$\Psi_t^{S,f},\Psi_t^{IS,f},\Psi_t^{RS,f},\Psi_t^{VS,f}:\mathcal{D}(R_+,\mathcal{M}_{0,4}^4)\to \mathcal{D}(\R_+,\R)$$ such that (\ref{GenEq}) can be read as
\begin{equation}\label{defPhi}
(\pint{\mus{t}}{f},\pint{\muis{t}}{f},\pint{\murs{t}}{f},\pint{\muvs{t}}{f})=(\Psi_t^{S,f}(\mu),\Psi_t^{IS,f}(\mu),\Psi_t^{RS,f}(\mu),\Psi_t^{VS,f}(\mu))
\end{equation}

\bigskip

\textit{Step 2: Uniqueness of the solution in
$\mathcal{C}(\R_+,\mathcal{M}_{0,4}\times\mathcal{M}_{0+,4}\times\mathcal{M}_{0,4}\times\mathcal{M}_{0,4})$.}

The second step consists in proving the limit values are the unique solution of (\ref{GenEq}).
The strategy will be to prove that the total measure and the first and second moments of two
solutions are equals and then prove that the generating functions of those measures
satisfies a partial differential equation that admits an unique solution in a weak sense.

Due to extension by regularity, it is enough to prove the uniqueness in \newline
$\mathcal{C}([0,T],\mathcal{M}_{0,4}\times\mathcal{M}_{\varepsilon,4}\times\mathcal{M}_{0,4}\times\mathcal{M}_{0,4})$
for all $\varepsilon,T>0$.

Take $\omu{i}{}=(\omu{S,i}{},\omu{IS,i}{},\omu{RS,i}{},\omu{VS,i}{})$ for $i=1,2$ two solutions
of (\ref{GenEq}) in this space with the same initial condition and define
\begin{equation}
\begin{aligned}
\Upsilon_t=&\sum_{j=0}^3 \vert \pint{\omu{S,1}{t}}{\chi^j}-\pint{\omu{S,2}{t}}{\chi^j}\vert \\ +&\sum_{j=0}^2 \Big( \vert \pint{\omu{IS,1}{t}}{\chi^j}-\pint{\omu{IS,2}{t}}{\chi^j}\vert
+\vert \pint{\omu{RS,1}{t}}{\chi^j}-\pint{\omu{RS,2}{t}}{\chi^j}\vert+ \\ & \qquad \qquad + \vert \pint{\omu{VS,1}{t}}{\chi^j}-\pint{\omu{VS,2}{t}}{\chi^j}\vert \Big)
\end{aligned}	
\end{equation}
Note that, for all $t\in[0,T)$ and $i=1,2$, we have $N^{S,i}_t\geq N^{IS,i}_t>\varepsilon$ and
therefore
\begin{equation}\label{pbound}
\vert \p{I,1}{t}-\p{I,2}{t}\vert \leq \frac{A}{\varepsilon^2}\vert\pint{\omu{S,1}{t}}{\chi}-\pint{\omu{S,2}{t}}{\chi}\vert +\frac{1}{\varepsilon}\vert\pint{\omu{IS,1}{t}}{\chi}-\pint{\omu{IS,2}{t}}{\chi}\vert \leq \frac{A}{\varepsilon^2}\Upsilon_t
\end{equation}
Analogously, a similar bound holds for $\vert \p{I,1}{t}-\p{I,2}{t}\vert$.

Since $\omu{i}{}$ are solutions of (\ref{GenEq}), we have, for $j=0,...,3$ and taking $f=\chi^j$
\begin{equation}
\begin{aligned}
\vert \pint{\omu{S,1}{t}}{\chi^j}-\pint{\omu{S,2}{t}}{\chi^j}\vert=&\vert \sumn{k}\mus{0}k^j(\alpha^1_t -\alpha^2_t)\vert \\
&\leq r\nu\sumn{k}k^j\mus{0}\int_0^t\vert \p{I,1}{s}-\p{I,2}{s}\vert ds \leq r\nu\frac{A^2}{\varepsilon^2} \int_0^t \Upsilon_s ds.
\end{aligned}
\end{equation}

One can reproduce similar computations for the other quantities and we get
\[\Upsilon_t \leq C(r,\gamma,\nu,A,\varepsilon)\int_0^t\Upsilon_s ds \]
that is, $\Upsilon$ satisfies a Gronwall type inequality which implies that is identically 0 for
all $t\leq T$. Then, for all $t<T$ and for $j=1,2,3$, we have
\begin{equation}
\pint{\omu{S,1}{t}}{\chi^j}=\pint{\omu{S,2}{t}}{\chi^j} \text{   ,   } \pint{\omu{IS,1}{t}}{\chi^j}=
\pint{\omu{IS,2}{t}}{\chi^j} \text{   and   }   \pint{\omu{RS,1}{t}}{\chi^j}=\pint{\omu{RS,2}{t}}{\chi^j}.
\end{equation}
This implies $\p{S,1}{t}=\p{S,2}{t}$, $\p{IS,1}{t}=\p{IS,2}{t}$,$\p{RS,1}{t}=\p{RS,2}{t}$ and
$\p{VS,1}{t}=\p{VS,2}{t}$. From the first equation in (\ref{GenEq}) and the regularity of the
solutions, we have almost sure uniqueness for $\omu{S}{}$.

It remains to prove the uniqueness for the other 3 measures. The method that we will use to
prove $\omu{IS,1}{}=\omu{IS,2}{}$ can be used for the rest.

We consider the generating functions \[\mathcal{G}^i_t(\eta)=\sum_{k\geq
0}\eta^k\omu{IS,i}{t}(k) \] for any $t\in\R_+$, $i=1,2$ and $\eta \in [0,1)$.

Let us define \[H(t,\eta)=\int_0^t\sum_{k\in \N}r\p{I,i}{s}k\;\sum_{\mathclap{\substack{\iota,j,l,m / \\ \iota+j+l+m=k-1}}}
\quad {k-1 \choose \iota,j,l,m} (\p{S}{s})^\iota(\p{I}{s})^j(\p{R}{s})^l(\p{V}{s})^m \mus{s}(k)
\eta^\iota ds\] and \[K_t=\sum_{k\in \mathbb{N}} \left[
rk\p{I}{t}(1+(k-1)\p{I}{t})+\pi_t(k)k\p{I}{t}\right]\frac{\mus{t}(k)}{N_t^{IS}}.\]

Using $f(k)=\eta^k$ in the second equation of (\ref{GenEq}) and after some basic
computations we get
\[ \mathcal{G}^i_t(\eta)= \mathcal{G}^i_0(\eta)+H(t,\eta)+\int_0^t K_s(1-\eta) \partial_\eta
\mathcal{G}^i_s(\eta)-\gamma\mathcal{G}^i_s(\eta) ds. \]

Now, $H(t,\eta)$ is continuously differentiable with respect to time and it is well defined and
bounded in $[0,T]$; and $K_t$ is  piece-wise continuous in $L^1$ and also it is well defined
and bounded on $[0,T]$. Further, $H$ and $K$ do not depend on the solution we choose,
because we already have $\omu{S,1}{}=\omu{S,2}{}$ and $\p{I,1}{}=\p{I,2}{}$. So, the
applications $t\to \tilde{\mathcal{G}}^i_t(\eta):=\mathcal{G}^i_t(\eta)e^{\gamma t}$ for
$i\in\{1,2\}$ are solutions of the equation
\[\partial_t G(t,\eta)-(1-\eta)K_t\partial_\eta G(t,\eta)=\partial_t H(t,\eta)e^{\gamma t}.\]

In view of the regularity of $H$ and $K$ it is known that this equation admits only one
solution in a weak sense (see last section in \cite{dipernalions}), hence
$\mathcal{G}^1_t(\eta)=\mathcal{G}^2_t(\eta)$ for all $t \in [0,T]$ and for all $\eta \in
[0,1)$. Since both measures have the same mass, we have $\omu{IS,1}{}=\omu{IS,2}{}$.

We can use similar arguments in order to prove that $\omu{VS,1}{}=\omu{VS,2}{}$ and
$\omu{RS,1}{}=\omu{RS,2}{}$.

\bigskip

 \textit{Step 3: $\mu^{(n)}$ satisfies asymptotically the deterministic system
(\ref{GenEq}).}

Let us remember that, for each $f\in \mathcal{C}_b(\N)$, we can write:  \[
\pint{\muisn{t}}{f}=\sum_{k\in\mathbb{N}}f(k)\muisn{0}(k)+A_t^{(n),IS,f} + M_t^{(n),IS,f}. \]

In order to characterize the limiting values, for each $n\in\N$ and for all $t \geq 0$, we have
\begin{equation}
\pint{(\mu^{(n),IS}_{t \wedge \tau^n_\varepsilon})}{f}=\Psi^{IS,f}_{t \wedge \tau^n_\varepsilon}(\mu^{(n)}+\Delta_{t \wedge \tau^n_\varepsilon}^{n,f}+M^{(n),IS,f}_{t \wedge \tau^n_\varepsilon},
\end{equation}
where $\Delta_{\cdot \wedge \tau^n_\varepsilon}^{n,f}$ vanishes in probability and
uniformly in $t$ over compact time intervals.

We can take bounds in a similar way as in Step 1 in order to get that:
\begin{equation}
\E{(M_t^{(n),IS,f})^2}=\E{\langle M^{(n),IS,f} \rangle_t}\leq \frac{(25r+\nu+2\gamma)A\ninf^2t}{n},
\end{equation}
which implies the sequence $(M_t^{(n),IS,f})_{n\in\N}$ vanishes in probability and in $L^2$,
and therefore in $L^1$ by Cauchy-Schwartz.

On the other hand, the finite variation part can be split in two: one considering the simple
edges between the newly infected node and the infected population, and a second part
regarding multiple edges, that we know is expected to vanish as the size of the population
grows. Formally,
\[ A^{(n),IS,f}_t=B^{(n),IS,f}_t+C^{(n),IS,f}_t,\]
where
\begin{equation}
\begin{aligned}
B^{(n),IS,f}_t=& \int_0^t \sumn{k}\lambda^{n}_s(k)\musn{s}(k)\quad \sum_{\mathclap{j+l+m\leq k-1}}\quad p_s^n(j,l,m \mid k-1) f(k-(j+l+m+1)) \\
 &+ \; \sum_{\mathclap{\substack{ u\in \mathcal{U}(\muisn{s},j+1) \\
u_i\leq 1}}} \; \rho(u\mid j+1,\muisn{s}) \sum_{i\leq I_{s-}^n}
[f(\zeta_i(\muisn{s})-u_i)-f(\zeta_i(\muisn{s}))]   \\
 &-  \gamma \pint{\muisn{s}}{f}ds  \\
  &+\int_0^t \sum_{k\in\mathbb{N}} \pi_sk\musn{s}(k)\sum_{j+l+m\leq k}p_s^n(j,l,m \mid k) \quad \sum_{\mathclap{\substack{ u\in \mathcal{U}(\muisn{s},j+1) \\ u_i\leq 1}}} \quad
 \rho(u\mid j,\muisn{s}) \\
 & \quad \times \left(\sum_{i=1}^{I_s}f(\zeta_i(\muisn{s})-u_i)-f(\zeta_i(\muisn{s}))\right)ds,
\end{aligned}
\end{equation}
and
\begin{equation}
\begin{aligned}
C^{(n),IS,f}_t=&\int_0^t \sumn{k}\lambda^{n}_s(k)\musn{s}(k) \sum_{j+l+m\leq k-1}p_s^n(j,l,m \mid k-1) \\
&\times \sum_{\mathclap{\substack{ u\in \mathcal{U}(\omu{n,IS}{s},j+1) \\i\leq I_{s-}^n ; \exists i\leq I^n_{s-} : u_i > 1  }}}\rho(u\mid j+1,\omu{n,IS}{s})  \left( f(\zeta_i(\omu{n,IS}{s-})-u_i)-f(\zeta_i(\omu{n,IS}{s-})) \right) ds \\
 & +\int_0^t \sum_{k\in\mathbb{N}} \pi_sk\musn{s}(k)\sum_{j+l+m\leq k}p_s^n(j,l,m \mid k) \sum_{\mathclap{\substack{
  u\in \mathcal{U}(\omu{n,IS}{s},j+1) \\
\exists i \leq I^n_{s-}: u_i > 1 }}}\rho(u\mid j,\omu{n,IS}{s}) \\
 & \quad \times \left(\sum_{i=1}^{I_{s-}}f(\zeta_i(\omu{n,IS}{s-})-u_i)-f(\zeta_i(\omu{n,IS}{s-}))\right)ds.
\end{aligned}
\end{equation}

In order to prove $C^{(n),IS,f}_t$ vanishes, let us denote
 \[ q_{j,l,s}^n=\sum_{\begin{scriptsize}\begin{array}{c} u\in \mathcal{U}(\muisn{s},j+1) \\
\exists i \leq I^n_{s-}: u_i > 1 \end{array}\end{scriptsize}}\rho(u\mid j,\omu{n,IS}{s})\] the
probability that the newly infected has a multiple edge with another infected agent. Given an
infected agent $i$, this probability is lower than the number of pairs of edges connecting the
newly infected with $i$ times the probability that these two edges in particular connecting $i$
with a susceptible individual at time $s-$ begin to connect their with the newly infected. That
is,
\begin{equation}
\begin{aligned}
q_{j,m,l,s}^n &\leq {j-1 \choose 2}\sum_{i=1}^{I^n_{s-}}\frac{k_i^{S^n_{s-}}(k_i^{S^n_{s-}}-1)}{N^{n,IS}_{s-}(N^{n,IS}_{s-}-1)}  \\
&={j-1 \choose 2}\frac{\frac{1}{n}\pint{\muisn{s-}}{\chi^2-\chi}}{N^{(n),IS}_{s-}(N^{(n),IS}_{s-}-\frac{1}{n})} \\
& \leq {j-1 \choose 2}\frac{1}{n}\frac{A}{\varepsilon(\varepsilon-\frac{1}{n})},
\end{aligned}
\end{equation}
as long as $s\leq \tau_\varepsilon^n$ and $n\geq 1/\varepsilon$. Let us remember that $k_i^{S^n_t}$ is the number of edges of the $i$-th node connecting with the susceptible population at time $t$, for a population of size $n$.

Additionally, for all the possible draws $u\in\U(j,\omu{n,IS}{s})$ we have \[
\left\vert\sum_{i=1}^{I_s}f(\zeta_i(\omu{n,IS}{s})-u_i)-f(\zeta_i(\omu{n,IS}{s}))\right\vert \leq
2j\ninf\] and applying both inequalities, for $n\geq 1/\varepsilon$ we get
\begin{equation}
\begin{aligned}
C^{(n),IS,f}_{t\wedge \tau^n_\varepsilon}&\leq \int_0^{t\wedge \tau^n_\varepsilon}
\sumn{k}rk\musn{s}(k) \\ &\qquad \qquad\times\sum_{j+l+m\leq k-1}p_s^n(j,l,m \mid k-1) 2(j+1)\ninf  \frac{Aj(j-1)}{2n\varepsilon(\varepsilon-\frac{1}{n})} ds \\
 &\quad + \int_0^{t\wedge \tau^n_\varepsilon} \sumn{k}\pi_s(k)\musn{s}(k) \\ &\qquad\qquad\times \sum_{j+l+m\leq k}p_s^n(j,l,m \mid k) 2j\ninf  \frac{A(j-1)(j-2)}{2n\varepsilon(\varepsilon-\frac{1}{n})} ds \\
 &\leq \frac{\nu r A \ninf t}{n\varepsilon(\varepsilon-\frac{1}{n})}\pint{\musn{0}}{\chi^4}.
\end{aligned}
\end{equation}

This last expression tends to zero because of the weak convergence of $\musn{0}$ to
$\omu{S}{0}$ and $\musn{s}\leq\musn{0}$ for all $s\geq 0$ and $n\in\N$.

The next task is to prove that $B^{(n),IS,f}_{\cdot \wedge \tau^n_\varepsilon}$ is similar in
some way to $\Psi^{IS,f}_{\cdot \wedge \tau^n_\varepsilon}(\mu^{(n)})$. For this, we realize
that
\begin{equation}
\begin{aligned}
&\sum_{\begin{scriptsize}\begin{aligned} &u\in \mathcal{U}(\muisn{s},j+1) \\
&\forall i\leq I_{s-}^n, u_i\leq 1 \end{aligned}\end{scriptsize}}\rho(u\mid j+1,\muisn{s}) \sum_{i\leq I_{s-}^n} \left( f(\zeta_i(\muisn{s})-u_i)-f(\zeta_i(\muisn{s}))\right) \\
&=\sum_{\begin{scriptsize}\begin{aligned} u \in (I^n_{s-})^{j+1}& \\
u_0\neq ... \neq u_j & \end{aligned}\end{scriptsize}} \left( \frac{\prod_{\iota=0}^j k_{u_\iota}(S_{s-}^n)}{N_{s-}^{n,IS}...(N_{s-}^{n,IS}-(j+1))}\right) \sum_{m=0}^j f(k_{u_m}^{S_{s-}^n}-1)-f(k_{u_m}^{S_{s-}^n}) \\
&=\sum_{m=0}^j \left( \sum_{i=1}^{I^n_{s-}} \frac{k_i^{S_{s-}^n}}{N_{s-}^{n,IS}}f(k_i^{S_{s-}^n}-1)-f(k_{i}^{S_{s-}^n} \right)\\
&\qquad\times \left(\sum_{\begin{scriptsize}\begin{aligned} u \in (I^n_{s-}\setminus\{x\})^{j} \\
u_0\neq ... \neq u_{j-1} & \end{aligned}\end{scriptsize}} \frac{\prod_{\iota=0}^{j-1} k_{u_\iota}^{S_{s-}^n}}{(N_{s-}^{n,IS}-1)...(N_{s-}^{n,IS}-(j+1))}\right) \\
&=(j+1)\frac{\pint{\muisn{s-}}{\chi(\tau_1f-f)}}{N^{n,IS}_{s-}}(1-q^n_{j,m,l,s}),
\end{aligned}
\end{equation}
where $\tau_jf(k):=f(k-j)$ for all $f:\N\to\R$ and $\forall k \in \N$.

Now we introduce some notation for the proportions of edges the newly infected agent has,
discarding the edge involved in the infection process. It is important here to make a
difference between the term that comes from the infection from the one that comes from the
PPM modeling the vaccination process, because in this case we do not assume a priori that
there is at least one infected neighbor. We define, for each $t>0$ and $n \in \N_0$,
\begin{equation}
\begin{aligned}
p_t^{n,I}&=\frac{\pint{\mu_t^{n,IS}}{\chi}-1}{\pint{\mu_t^{n,S}}{\chi}-1}, \\
p_t^{n,R}&=\frac{\pint{\mu_t^{n,RS}}{\chi}}{\pint{\mu_t^{n,S}}{\chi}-1}, \\
p_t^{n,V}&=\frac{\pint{\mu_t^{n,VS}}{\chi}}{\pint{\mu_t^{n,S}}{\chi}-1}, \\
p_t^{n,S}&=\frac{\pint{\mu_t^{n,S}}{\chi}-\pint{\mu_t^{n,IS}}{\chi}-\pint{\mu_t^{n,RS}}{\chi}-\pint{\mu_t^{n,VS}}{\chi}}{\pint{\mu_t^{n,S}}{\chi}-1}.
\end{aligned}
\end{equation}

Let us remember that
\begin{equation}
\begin{aligned}
p^n_{t}(j,l,m\mid k-1)=&\frac{ {N^{n,IS}_{t}-1 \choose j}{N^{n,RS}_{t} \choose l}{N^{n,VS}_{t} \choose m}{N^{n,S}_{t}-N^{n,RS}_{t}-N^{n,IS}_{t}-N^{n,VS}_{t} \choose k-1-j-l-m}}{{N^{n,S}_{t}-1 \choose k-1}} \\
q^n_{t}(j,l,m\mid k)=&\frac{ {N^{n,IS}_{t} \choose j}{N^{n,RS}_{t} \choose l}{N^{n,VS}_{t} \choose m}{N^{n,S}_{t}-N^{n,RS}_{t}-N^{n,IS}_{t}-N^{n,VS}_{t} \choose k-j-l-m}}{{N^{n,S}_{t} \choose k}}
\end{aligned}
\end{equation}

In the case of the infection process, we also define, for all the $j,l,m$ such that $j+l+m\leq
k-1$, and for all $n\in\N$, \[ \tilde{p}^n_{t}(j,l,m\mid
k-1)=\frac{(k-1)!\; (p_t^{n,I})^j(p_t^{n,R})^l (p_t^{n,V})^m(p_t^{n,S})^{k-1-j-l-m}}{j!\;l!\;m!\;(k-1-j-l-m)!},\]
and for the vaccinations,
\[\qtilde=\frac{k!\; (p_t^{n,I})^j(p_t^{n,R})^l(p_t^{n,V})^m(p_t^{n,S})^{k-j-l-m}}{j!\;l!\;m!\;(k-j-l-m)!}
\] the probabilities of the multinomial  variables counting the quantities of each types of
neighbors that will has the newly infected or vaccinated, respectively.

We can write
\begin{equation}
\vert B^{(n),IS,f}_{t \wedge \tau^n_\varepsilon}-\Psi^{IS,f}_{t \wedge \tau^n_\varepsilon}(\mu^{(n)})
\vert \leq \vert D^{(n),IS,f}_{t \wedge \tau^n_\varepsilon}\vert + |E^{(n),IS,f}_{t \wedge \tau^n_\varepsilon}|,
\end{equation}
where
\begin{equation}
\begin{aligned}
D^{(n),IS,f}_{t \wedge \tau^n_\varepsilon}=&\int_0^{t \wedge \tau^n_\varepsilon} \sumn{k}\lambda_s(k)^n\musn{s}(k) \\ &\qquad \times \sum_{j+l+m+1\leq k}\left[\pcomun-\ptilde\right] \\
&\qquad \times\left( f(k-(j+l+m+1))+(j+1)\frac{\pint{\muisn{s-}}{\chi(\tau_1f-f)}}{N^{n,IS}_{s-}} \right) ds \\
&+\int_0^{t \wedge \tau^n_\varepsilon} \sumn{k}\pi_s(k)^n\musn{s}(k)\quad \sum_{\mathclap{j+l+m\leq k}}\;\Big[\qcomun-\qtilde\Big] \\
&\hspace{120pt} \times\left( j\frac{\pint{\muisn{s-}}{\chi(\tau_1f-f)}}{N^{n,IS}_{s-}} \right) ds, \\
E^{(n),IS,f}_{t \wedge \tau^n_\varepsilon}=&\int_0^{t \wedge \tau^n_\varepsilon} \sumn{k}\lambda_s(k)^n\musn{s}(k)\times \sum_{j+l+m+1\leq k}\pcomun\\
&\hspace{80pt} \times\left( (j+1)\frac{\pint{\muisn{s-}}{\chi(\tau_1f-f)}}{N^{n,IS}_{s-}}q_{j-1,l,m,s}^n \right) ds \\
&+\int_0^{t \wedge \tau^n_\varepsilon} \sumn{k}\pi_s(k)^n\musn{s}(k)\times \sum_{j+l+m\leq k}\qcomun \\
&\hspace{120pt} \times\left( j\frac{\pint{\muisn{s-}}{\chi(\tau_1f-f)}}{N^{n,IS}_{s-}}q_{j,l,m,s}^n \right) ds.
\end{aligned}
\end{equation}

Thus, if we consider the differences
\[ \alpha_t^n(k)=\sum_{j+l+m+1\leq k}|\pcomun-\ptilde| \] and \[ \gamma_t^n(k)=\sum_{j+l+m\leq k}|\qcomun-\qtilde|, \] we can bound:
\begin{equation}
\begin{aligned}
|D^{(n),IS,f}_{t \wedge \tau^n_\varepsilon}|\leq \int_0^{t \wedge \tau^n_\varepsilon}
\sumn{k}\Big( rk\musn{s}(k)&\alpha_s^n(k)(1+2k)\ninf+ \\
&\pi_s(k)\musn{s}(k)\gamma_s^n(k)k\ninf \Big)ds.
\end{aligned}
\end{equation}
Since the multinomial term is a good approximation of the multivariate hypergeometric as
$n$ goes to infinity, the last expression tends to zero due to dominated convergence.

On the other hand,
\begin{equation}
\begin{aligned}
|E^{(n),IS,f}_{t \wedge \tau^n_\varepsilon}| &\leq \int_0^{t \wedge \tau^n_\varepsilon} \sumn{k}\left( (r+\nu)k^2\musn{s}(k)\ninf2 \frac{k^2A}{2n\varepsilon(\varepsilon-1/n)}\right)ds\\ &\leq \int_0^{t \wedge \tau^n_\varepsilon} \left( (r+\nu)\pint{\musn{s}}{\chi^4}\ninf \frac{A}{n\varepsilon(\varepsilon-1/n)}\right)ds \\
&\leq \frac{A^2t(r+\nu)}{n\varepsilon(\varepsilon-1/n)}.
\end{aligned}
\end{equation}
Putting all the bounds together, we can conclude that $\pint{\muisn{}}{f}$ converges in
probability uniformly over compact intervals.

\bigskip

 \textit{Step 4: The limit satisfies the deterministic system \eqref{GenEq}}

We are considering the sequence $(\mun{\cdot \wedge \tau^n_\varepsilon})_{n\in\N}$ and
we already proved that its limit in the closed set $\mathcal{M}_{0,A}^4$ is $\omu{}{}$, we
want to prove the same for the non stopped sequence. According to the Skorokhod
representation theorem there exists a subsequence on the same probability space of
$\omu{}{}$ whose marginal probability distributions are the same as those of the original
sequence such that $\mu$ is the almost sure limit. With an abuse of notation, we will denote
$(\mun{\cdot \wedge \tau^n_\varepsilon})_{n\in\N}$ this subsequence.

The mappings \[ \nu_\cdot:=(\nu^1_\cdot,...,\nu_\cdot^4)\mapsto
\frac{\pint{\nu_\cdot^k}{\mathds{1}}}{\sum_{j=1}^4 \pint{\nu_\cdot^j}{\mathds{1}}}\] for $k\in
\{1,2,3,4\}$ are continuous from $\C(\R_+,\moa\times\mea\times\moa\times\moa)$ in
$\C(\R_+,\R)$.

According to lemma (\ref{lemmaCont}) we have that, for $p\leq 5$, $\Phi_p:\D(\R_+,\mea)
\to \D(\R_+,\R)$ which assigns $\nu_\cdot \mapsto \pint{\nu_\cdot}{\chi^p}$ is continuous.

Using this, and that the quotient $(X_\cdot^1,X_\cdot^2)\mapsto
\frac{X_\cdot^1}{X_\cdot^2}$ from $\C(\R_+,\R)\times\C(\R_+,\R_*)$ to $\C(\R_+,\R)$ is
continuous, we deduce the continuity of $\nu_\cdot \mapsto
\frac{\pint{\nu^1_\cdot}{\chi}}{\pint{\nu^2_\cdot}{\chi}}$ from
$\C(\R_+,\moa\times\mea\times\moa\times\moa)$ in $\C(\R_+,\R)$. The same argument
holds for $\nu_\cdot \mapsto
\frac{\mathds{1}_{\{\pint{\nu^1_\cdot}{\chi}>\varepsilon\}}}{\pint{\nu^2_\cdot}{\chi}}$ over
the same spaces.

Since the mapping \[y\in\D([0,t],\R)\mapsto \int_0^t y_s ds\] is continuous, we conclude the
proof of the continuity of the application $\Psi_t^f$ defined in (\ref{defPhi}).

Applying Lemma (\ref{lemmaCont}) with $p=1$ we obtain that the process
$(N_{\cdot\wedge\tau_\varepsilon^n}^{(n),IS})_{n\in\N}$ converges in distribution to
$N^{IS}_\cdot:=\pint{\omu{IS}{\cdot}}{\chi}$, and, as the limit is continuous, the convergence
also holds in $(\D([0,T],\R_+),\Vert . \Vert_\infty)$ for all $T>0$
\cite{billingsley2013convergence}.

Since taking infimum is continuous over $\D(\R_+,\R)$ we have that \[\inf_{t\in
[0,T]}N^{IS}_t=\lim_{n\to\infty}\inf_{t\in [0,T]}N_{t\wedge\tau_\varepsilon^n}^{(n),IS}\] is
greater than or equal to $\varepsilon$ almost surely.

Let us define $\overline{t}_{\varepsilon'}=\inf\{t\in\R_+ : N^{IS}_t\leq \varepsilon'\}$. We do
not know this number to be deterministic, but we can say that:
\[\varepsilon' \leq \inf_{t\in [0,T]}N_{t\wedge\overline{t}_{\varepsilon'}}^{(n),IS}=\lim_{n\to\infty}\inf_{t\in [0,T]}N_{t\wedge\tau_\varepsilon^n\wedge\overline{t}_{\varepsilon'}}^{(n),IS}\]
Then, applying Fatou's Lemma,
\begin{equation}\label{stoppingIneq}
\begin{aligned}
1=&P(\inf_{t\in [0,\overline{t}_{\varepsilon'}]}N_{t\wedge\overline{t}_{\varepsilon'}}^{(n),IS}>\varepsilon)\leq \\
&\lim_{n\to\infty}P\left(\inf_{t\in [0,T\wedge\tau_{\varepsilon'}^n]}N_{t\wedge\tau_{\varepsilon}^n}^{(n),IS}>\varepsilon\right)=\lim_{n\to\infty}P(\tau_\varepsilon^n > T\wedge\overline{t}_{\varepsilon'})
\end{aligned}
\end{equation}
Therefore, splitting in the following way:
\begin{equation}\label{splitPsi}
\Psi_{\cdot \wedge\tau_\varepsilon^n\wedge\overline{t}_{\varepsilon'}\wedge T}^{IS,f}(\mu^{(n)})=\Psi_{\cdot \wedge\tau_\varepsilon^n\wedge T}^{IS,f}(\mu^{(n)})\mathds{1}_{\tau_\varepsilon^n\leq\overline{t}_{\varepsilon'}\wedge T}+\Psi_{\cdot \wedge\overline{t}_{\varepsilon'}\wedge T}^{IS,f}(\mu^{(n)}_{\cdot \wedge\tau_\varepsilon^n})\mathds{1}_{\tau_\varepsilon^n>\overline{t}_{\varepsilon'}\wedge T}
\end{equation}
we have, from the bounds and estimations we made in  Step 3, that $\Psi_{\cdot
\wedge\tau_\varepsilon^n\wedge T}^{IS,f}(\mu^{(n)})$ is bounded for the fourth moment of
$\mu^{(n)}$. Since $\mu^{(n)}_0\to\omu{}{0}$ and using (\ref{stoppingIneq}), the first term in
(\ref{splitPsi}) converges in $L^1$ and in probability to zero. On the other hand, the
continuity of $\Psi^{IS,f}$ in $\D(\R_+,\moa\times\mea\times\moa\times\moa)$,
$\Psi^{IS,f}(\mu^{(n)}_{\cdot \wedge\tau_\varepsilon^n})$ converges to $\Psi^{IS,f}(\mu)$
and therefore, $\Psi^{IS,f}_{\cdot \wedge\overline{t}_{\varepsilon'}\wedge
T}(\mu^{(n)}_{\cdot \wedge\tau_\varepsilon^n})$ converges to $\Psi^{IS,f}_{\cdot
\wedge\overline{t}_{\varepsilon'}\wedge T}(\mu)$. So,  this convergence and
(\ref{stoppingIneq}) implies that, the second term converges to $\Psi^{IS,f}_{\cdot
\wedge\overline{t}_{\varepsilon'}\wedge T}(\mu)$ in $\D(\R_+,\R)$.

Hence, $(\pint{\mu^{(n),IS}_{\cdot
\wedge\tau_\varepsilon^n\wedge\overline{t}_{\varepsilon'}\wedge T}}{f}-\Psi_{\cdot
\wedge\tau_\varepsilon^n\wedge\overline{t}_{\varepsilon'}\wedge
T}^{IS,f}(\mu^{(n)}))_{n\in\N}$ converges in probability to $\pint{\omu{}{\cdot
\wedge\overline{t}_{\varepsilon'}\wedge T}}{f}-\Psi_{\cdot
\wedge\overline{t}_{\varepsilon'}\wedge T}^{IS,f}(\mu)$. Recalling Step 3 again, and the
estimations done in it, we can conclude this sequence also converges in probability to zero.
Therefore, we have  that $\muis{}$ is a solution to the system (\ref{GenEq}) on the interval
$[0,\overline{t}_{\varepsilon'}\wedge T]$.

If either $\pint{\murs{0}}{\chi}>0$ or $\pint{\muvs{0}}{\chi}>0$, then we could apply similar
techniques with both. If not, the result can be immediately deduced because for all
$t\in[0,\overline{t}_{\varepsilon'}\wedge T]$, $\pint{\muisn{t}}{\chi}>\varepsilon$ and the
terms $\pcomun$ and $\qcomun$ are negligible when $l$ or $m$ are positives.

So, $\mu$ is almost surely the unique continuous solution of the deterministic system
(\ref{GenEq}) in $[0,\overline{t}_{\varepsilon'}\wedge T]$, which implies
$\overline{t}_{\varepsilon'}=t_{\varepsilon'}$ and the convergence in probability of
$(\mu^{(n)}_{\cdot\wedge\tau_\varepsilon^n})_{n\in\N}$ to $\mu$ holds, uniformly on the
interval $[0,t_{\varepsilon'}]$, due to the continuity of $\mu$.

In order to prove the convergence in the Skorokhod space, for $\eta >0$, we write:
\begin{equation}
\begin{aligned}
&P\Big(\sup_{t\in[0,t_{\varepsilon'}]}|\pint{\muisn{t}}{f}-\Psi^{IS,f}_t(\mu)|>\eta\Big) \leq \\
 & \quad P\Big(\sup_{t\in[0,t_{\varepsilon'}]}|\Psi^{IS,f}_{t\wedge\tau^n_\varepsilon}(\mu^{(n)})-\Psi^{IS,f}_t(\mu)|>\frac{\eta}{2} \; ; \;t_{\varepsilon'}\leq \tau_\varepsilon^n\Big)\\
&\qquad +P\Big(\sup_{t\in[0,t_{\varepsilon'}]}|\Delta^{n,f}_{t\wedge\tau^n_\varepsilon}+
M^{(n),IS,f}_{t\wedge\tau^n_\varepsilon}|>\frac{\eta}{2}\Big)+P(\tau^n_\varepsilon<t_{\varepsilon'}).
\end{aligned}
\end{equation}
Using the continuity of $\Psi^f$ and the uniform convergence in probability that we have
proved, the first term in the last expression converges to zero. In order to show that   the
second term vanish, we can reproduce the bounds taken in Step 2 of this proof and apply
Doob's inequality. Finally, since $P(\tau_\varepsilon^n>T\wedge\overline{t}_{\varepsilon'})\to
1$ we have  that the three terms goes to zero.

Hence, due to uniqueness proved in Step 2, the original sequence $(\mu^{(n)})_{n\in\N}$
converges.

 \bigskip

\textit{Step 5: The convergence of the other measures} What we have done for the
infected-susceptible connectivity measure can be also done  for the recovered and
vaccinated measures in much the same way. For the susceptible connectivity measure, one
can reason in the following way. If we consider the renormalized equation \ref{SDEn} and we
take limit in $n$, the sequence $(\musn{})_{n\in\N}$ converges in $\D(\R_+,\moa)$ to the
solution to the transport equation
\begin{equation}
\pint{\mus{t}}{f_t}=\pint{\mus{0}}{f_0}-\int_0^t \pint{\mus{s}}{(r\p{I}{s}+\pi_s)\chi f_s-\partial_s f_s}ds
\end{equation}
that can be solved as a function of $\p{I}{}$ and $\pi$, for any test function $f\in
\C_b^{0,1}(\N\times \R_+,\R)$ with bounded derivative respect time variable \cite{dipernalions}.

If we take
$f(k,s)=\varphi(k)e^{-\int_0^{t-s}rk\p{I}{u}+\pi_u(k)du}$ we obtain
\[\pint{\mus{t}}{\varphi}=\sumn{k} \varphi(k)\alpha_t^k \theta^{\xi(k)} \mus{0}(k)\] as the first equation of
(\ref{GenEq}) establishes.

The proof is finished.
\end{proof}

\begin{lemma}\label{lemmaCont}
For any $p\leq 5$, the map $\Phi_p:\D(\R_+,\mea)\to \D(\R_+,\R)$ that assigns $\Phi(\nu_.)
\mapsto \pint{\nu_.}{\chi^p}$ is continuous.
\end{lemma}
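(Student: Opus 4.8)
The plan is to factor the map through its action at a fixed time and reduce the path-space statement to an elementary one about the functional $\phi_p:\mea\to\R$, $\phi_p(\mu)=\pint{\mu}{\chi^p}$. First I would record that $\mea$ is relatively compact in $\M$ for the weak topology: the uniform bound $\pint{\mu}{1+\chi^5}\le A$ bounds the total mass and yields tightness (since $\mu(\{k>M\})\le A/M^5$ uniformly), so Prokhorov's theorem gives compactness of the weak closure. This compactness is exactly what lets a continuous state-space map lift to the Skorokhod space, in the same spirit as the continuity arguments used in Step 4: if $h:\mea\to\R$ is continuous, then $\nu_\cdot\mapsto h\circ\nu_\cdot$ is continuous from $\D(\R_+,\mea)$ to $\D(\R_+,\R)$. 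Indeed, for $J_1$-convergence $\nu^{(n)}_\cdot\to\nu_\cdot$ there exist time changes $\lambda_n\to\mathrm{id}$ with $\nu^{(n)}_{\lambda_n(\cdot)}\to\nu_\cdot$ uniformly on compact time intervals; the range of a c\`adl\`ag path together with the converging values lies in a fixed compact subset of $\M$, on which $h$ is uniformly continuous, so $h\circ\nu^{(n)}_{\lambda_n(\cdot)}\to h\circ\nu_\cdot$ uniformly, which is $J_1$-convergence of the images. Thus the whole lemma reduces to the continuity of $\phi_p$.

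Second, I would prove that $\phi_p$ is continuous on $\mea$ by a truncation argument that turns the fifth-moment bound into a uniform-integrability device. Fix $\mu_n\to\mu$ weakly in $\mea$ and $K\in\N$, and split $\pint{\mu_n}{\chi^p}=\pint{\mu_n}{\chi^p\mathds{1}_{\{\chi\le K\}}}+\pint{\mu_n}{\chi^p\mathds{1}_{\{\chi>K\}}}$. The tail is controlled uniformly in $n$ by
\[ \pint{\mu_n}{\chi^p\mathds{1}_{\{\chi>K\}}}=\sum_{k>K}k^p\mu_n(k)\le K^{p-5}\sum_{k>K}k^5\mu_n(k)\le A\,K^{p-5}, \]
using $k^p\le k^5K^{p-5}$ for $k\ge K$ when $p<5$. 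The truncated weight $\chi^p\mathds{1}_{\{\chi\le K\}}$ is a bounded function on $\N_0$, hence continuous since the space is discrete, so $\pint{\mu_n}{\chi^p\mathds{1}_{\{\chi\le K\}}}\to\pint{\mu}{\chi^p\mathds{1}_{\{\chi\le K\}}}$ by weak convergence. A three-$\varepsilon$ argument, sending $n\to\infty$ first and then $K\to\infty$ (the same tail bound applies to the limit $\mu\in\mea$), yields $\pint{\mu_n}{\chi^p}\to\pint{\mu}{\chi^p}$, i.e. continuity of $\phi_p$.

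The delicate point, which I expect to be the genuine obstacle, is the boundary exponent $p=5$. There the tail estimate degenerates to $\pint{\mu_n}{\chi^5\mathds{1}_{\{\chi>K\}}}\le A$, which does not vanish as $K\to\infty$, so a mere bound on the fifth moment no longer forces uniform integrability: placing mass $a/n^5$ at the point $n$ perturbs the fifth moment by the fixed amount $a$ while leaving every lower moment and the weak limit unchanged. Consequently the clean argument above gives continuity for every $p<5$, which is exactly what is used downstream (the continuity of the $\Psi^{\cdot,f}$ and the normalisations in Step 4 invoke $\Phi_p$ only with $p\le 2$, and $N^{IS}=\pint{\cdot}{\chi}$ with $p=1$). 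The boundary case $p=5$ must therefore either be excluded from the statement, or recovered not from membership in $\mea$ alone but from the additional uniform integrability that the stopped sequences $(\mun{\cdot\wedge\tau^n_\varepsilon})$ enjoy through the moment bounds established in Step 1.
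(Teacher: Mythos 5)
Your argument is correct where the lemma is actually true, and it is in substance the proof the paper appeals to: the paper offers no self-contained argument, only a pointer to the appendix lemmas of Decreusefond et al., and those follow exactly your route --- compactness of $\mea$ for the weak topology, continuity of the moment functional on that compact set via truncation against the uniform bound $\pint{\mu}{1+\chi^5}\leq A$, and the lift to $\D(\R_+,\mea)$ through time changes plus uniform continuity on compacts. So there is no methodological divergence to report; you have made explicit what the citation hides.

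The substantive content of your proposal is the boundary case, and you are right: as printed, with $p=5$ allowed, the lemma is false. Your perturbation is a valid counterexample: if $\pint{\mu}{1+\chi^5}\leq A-2a$ and $\pint{\mu}{\chi}\geq\varepsilon$, then $\mu_n=\mu+(a/n^5)\delta_n$ lies in $\mea$, converges weakly to $\mu$ (each point mass and the total mass converge), and yet $\pint{\mu_n}{\chi^5}=\pint{\mu}{\chi^5}+a$ for every $n$; composing with constant paths destroys continuity of $\Phi_5$ on $\D(\R_+,\mea)$. The uniform-integrability mechanism genuinely requires the growth of the test function to be dominated strictly by the bounding moment, so the correct statement is $p<5$ (equivalently, integer exponents $p\leq 4$). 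The error is harmless for the paper: in Step 4 the lemma is invoked only for $p=0,1$ (the mass functionals, $N^{IS}$, and the edge-proportion quotients), and the continuity of the maps $\Psi^{\cdot,f}$ involves only low moments, all well below the threshold; the fourth and fifth moments enter the proof solely as bounds, never through a continuity claim. But the statement of the lemma should be corrected exactly as you conclude.
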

\begin{proof}
The proof can be obtained by following the steps of Lemma 1-5 in the appendix of
\cite{decreusefond2012large}.
\end{proof}

\begin{acknowledgements}
The authors E.J. Ferreyra and M. Jonckheere are partially supported by Universidad de Buenos Aires under grants PICT 2018-03630, by Agencia Nacional de Promoci\'on Cinet\'ifica y
T\'ecnol\'ogica.
The author J.P. Pinasco was partially supported by
Universidad de Buenos Aires under grants
20020170100445BA,  by Agencia Nacional de Promoci\'on Cinet\'ifica y
T\'ecnol\'ogica PICT2016-1022.

\end{acknowledgements}

\bibliographystyle{spmpsci}
\bibliography{bibliografiasFerreyra.bib}
\end{document}